%TCIDATA{Version=5.50.0.2953}
%TCIDATA{LaTeXparent=0,0,Wave2MovPts.tex}
 \documentclass[a4paper,reqno,final,11pt]{amsart}
%%%%%%%%%%%%%%%%%%%%%%%%%%%%%%%%%%%%%%%%%%%%%%%%%%%%%%%%%%%%%%%%%%%%%%%%%%%%%%%%%%%%%%%%%%%%%%%%%%%%%%%%%%%%%%%%%%%%%%%%%%%%%%%%%%%%%%%%%%%%%%%%%%%%%%%%%%%%%%%%%%%%%%%%%%%%%%%%%%%%%%%%%%%%%%%%%%%%%%%%%%%%%%%%%%%%%%%%%%%%%%%%%%%%%%%%%%%%%%%%%%%%%%%%%%%%
\usepackage{amssymb}
\usepackage{amsmath}
\usepackage{amsfonts}
\usepackage{hyperref}
\usepackage{graphicx}

\newtheorem{theorem}{Theorem}
\theoremstyle{plain}
\newtheorem{corollary}{Corollary}

\newtheorem{lemma}{Lemma}

\newtheorem{remark}{Remark}
\numberwithin{equation}{section}

 \title[Observability and Controllability of the Wave Equation]{Exact Boundary
Observability and Controllability of the Wave Equation in an Interval
with two Moving Endpoints}
\author[A. Sengouga]{Abdelmouhcene Sengouga}
\address[Abdelmouhcene Sengouga]{ Laboratory of Functional Analysis and
Geometry of Spaces\\
Department of mathematics\\
Faculty of Mathematics and Computer Sciences\\
University of M'sila\\
28000 M'sila, Algeria.}
\email{amsengouga@gmail.com}
\subjclass[2010]{35L05, 93B07.}
\keywords{Wave equation, noncylindrical domains, generalized Fourier series, observability,
controllability, Hilbert uniqueness method.}
\date{\today}% \ -- \currenttime\ -- \currfilename.}
\begin{document}

\begin{abstract}begin{abstract}
We study the wave equation in an interval with two linearly moving
endpoints. We give the exact solution by a series formula, then we show that
the energy of the solution decay at the rate $1/t$. We also establish
observability results, at one or two endpoints,\ in a sharp time. Moreover,
using the Hilbert uniqueness method, we derive exact boundary
controllability results.
\end{abstract}
\maketitle
%%%%%%%%%%%%%%%%%%%%%%%%%%%%%%%%%%%%%%%%%%%%%%%%%%%%%%%%%%%%%%%%%%%%%%%%%%%%%%%%%%%%%%%%%%%%%%%%%%%%%%%%%%%%%%%%

\section{Introduction}

The wave equation is a simple mathematical model describing the transverse
small vibrations of a homogeneous string under tension and constrained to
move in a plane. Let $T>0.$ When the string endpoints are clamped and its
length $\ell _{0}>0\ $is invariant with time, this model can be stated as%
\begin{equation*}
\left\{ 
\begin{array}{ll}
w_{tt}-w_{xx}=0,\ \smallskip  & \text{for }\left( x,t\right) \in \left(
0,\ell _{0}\right) \times \left( 0,T\right) , \\ 
w\left( 0,t\right) =w\left( \ell ,t\right) =0,\smallskip  & \text{for \ }%
t\in \left( 0,T\right) , \\ 
w(x,0)=w^{0},\ \ w_{t}\left( x,0\right) =w^{1}\text{, \ } & \text{for \ \ }%
x\in \left( 0,\ell _{0}\right) ,%
\end{array}%
\right. 
\end{equation*}%
see \cite{Str:08}. The function $x\rightarrow w\left( x,t\right) ,x\in
\left( 0,\ell _{0}\right) $ describes the shape of the string at time $t.$
It is well known that for initial data satisfying $w^{0}\in H_{0}^{1}\left(
0,\ell _{0}\right) ,w^{1}\in L^{2}\left( 0,\ell _{0}\right) ,$ the solution
of this problem is unique and enjoys, in particular, the following
properties :

\begin{itemize}
\item The \textquotedblleft energy\textquotedblright\ of the solution, given
by 
\begin{equation*}
E_{0}\left( t\right) =\frac{1}{2}\int_{0}^{\ell
_{0}}w_{x}^{2}(x,t)+w_{t}^{2}(x,t)\text{ }dx,\ \ \ \ \text{ }t\in \left(
0,T\right) ,
\end{equation*}%
is a conserved quantity, i.e. $E_{0}\left( t\right) =Constante$, $\forall
t\geq 0$.

\item Due to the finite speed of propagation (here equal to $1$), the
observability and the controllability at one endpoint hold$\ $if and only if
the length of the time interval satisfies $T\geq 2\ell _{0}.$

\item The observability and the controllability at the two endpoints holds
if and only if $T\geq \ell _{0}.$
\end{itemize}

If the length of the string varies in time, we wonder if the solution of the
wave equation has some analogue proprieties. Such situation where the
spacial domain is time-dependent appears in many different areas of physics,
from optics, electromagnetism, fluid dynamics to quantum mechanics. See for
instance \cite{DKN:93, MLM:02, Moo:70,Wan:90} and the survey paper \cite%
{KnK:14}.

To be more precise, we consider the wave equation in an interval with two
linearly moving endpoints. We suppose that the left endpoint moves to the
left with a constant speed $\ell _{1}$ and the other endpoint moves to the
right at a constant speed $\ell _{2}$. We assume that%
\begin{equation}
0\leq \ell _{1},\ell _{2}<1\text{ \ \ and \ \ }\ell _{1}+\ell _{2}>0.
\label{tlike}
\end{equation}%
The Later condition ensures that the length of the string is increasing. Let
us denote by $L_{0}$ its initial length. In order to simplify notation and
also to make the computations easier, we take as an initial time 
\begin{equation*}
t_{0}=L_{0}/\left( \ell _{1}+\ell _{2}\right) 
\end{equation*}%
and consider the following interval with moving ends%
\begin{equation*}
I_{t}:=\left( -\ell _{1}t,\ell _{2}t\right) ,\text{ \ }t\geq t_{0}.
\end{equation*}%
In the $xt-$plan, we have a noncylindrical domain $Q_{t_{0}+T},$ and its
lateral boundary $\Sigma _{t_{0}+T},$ defined as%
\begin{gather*}
Q_{t_{0}+T}:=\left\{ \left( x,t\right) \in 
%TCIMACRO{\U{211d} }%
%BeginExpansion
\mathbb{R}
%EndExpansion
^{2}\text{ }|\text{ }-\ell _{1}t<x<\ell _{2}t,\text{ \ \ for }t\in \left(
t_{0},t_{0}+T\right) \right\} , \\
\Sigma _{t_{0}+T}:=\bigcup_{t_{0}<t<t_{0}+T}\left\{ \left( -\ell
_{1}t,t\right) ,\left( \ell _{2}t,t\right) \right\} .
\end{gather*}%
The assumption $0\leq \ell _{1},\ell _{2}<1$ in  (\ref{tlike}) ensures that $%
\Sigma _{t_{0}+T}$ satisfies the so-called timelike condition.

Let us now consider the wave equation, with homogeneous Dirichlet boundary
conditions, 
\begin{equation}
\left\{ 
\begin{array}{ll}
\phi _{tt}-\phi _{xx}=0,\ \smallskip  & \text{in }Q_{t_{0}+T}, \\ 
\phi \left( -\ell _{1}t,t\right) =\phi \left( \ell _{2}t,t\right)
=0,\smallskip  & \text{for \ }t\in \left( t_{0},t_{0}+T\right) , \\ 
\phi (x,t_{0})=\phi ^{0}\left( x\right) \text{, \ \ }\phi _{t}\left(
x,t_{0}\right) =\phi ^{1}\left( x\right) ,\text{ \ } & \text{for \ \ }x\in
I_{t_{0}}.%
\end{array}%
\right.   \tag{$WP$}  \label{wave}
\end{equation}%
Under the assumption (\ref{tlike}) and for every initial data%
\begin{equation}
\phi ^{0}\in H_{0}^{1}\left( I_{t_{0}}\right) ,\text{ \ }\phi ^{1}\in
L^{2}\left( I_{t_{0}}\right) \   \label{ic}
\end{equation}%
there exists a unique solution to Problem (\ref{wave}) such that%
\begin{equation}
\phi \in C\left( [t_{0},t_{0}+T];H_{0}^{1}\left( I_{t}\right) \right) ,\text{
\ \ }\phi _{t}\in C\left( [t_{0},t_{0}+T];L^{2}\left( I_{t}\right) \right) .
\label{solreg}
\end{equation}%
see \cite{BaC:81,CoB:73,Lion:69}. We define the \textquotedblleft
energy\textquotedblright\ of the solution of Problem (\ref{wave}) as%
\begin{equation}
E\left( t\right) =\frac{1}{2}\int_{-\ell _{1}t}^{\ell _{2}t}\phi
_{x}^{2}(x,t)+\phi _{t}^{2}(x,t)\text{ }dx,\ \ \ \ \ \text{for }t\geq t_{0}.
\label{E}
\end{equation}%
As we will see below, $E\left( t\right) $ is decaying in time. This
contrasts the conservation of the energy $E_{0}\left( t\right) $ defined
above.

Next, we consider the observability problem for (\ref{wave}) at an endpoint $%
\xi t$ for $\xi \in \left\{ -\ell _{1},\ell _{2}\right\} .$ This problem can
be stated as follows: to give sufficient conditions on the length of the
time interval, denoted by $T_{\ell },$ such that there exists a constant $%
C(T_{\ell })>0$ for which the observability inequality%
\begin{equation*}
E\left( t_{0}\right) \leq C(T_{\ell })\int_{t_{0}}^{t_{0}+T_{\ell }}\phi
_{x}^{2}\left( \xi t,t\right) +\phi _{t}^{2}\left( \xi t,t\right) dt,
\end{equation*}%
holds for all the solutions of (\ref{wave}). This inequality is also called
the inverse inequality. It allows estimating the energy of solutions in
terms of the energy localized at the boundary $x=\xi t$. Observe that $\phi
\left( \xi t,t\right) =0\ $yields $\left( \phi \left( \xi t,t\right) \right)
_{t}=\xi \phi _{x}\left( \xi t,t\right) +\phi _{t}\left( \xi t,t\right) =0.$
Then denoting $C_{\xi }=C(T_{\ell })\left( 1-\xi ^{2}\right) $, the
precedent inequality can be rewritten as%
\begin{equation}
E\left( t_{0}\right) \leq C_{\xi }(T_{\ell })\int_{t_{0}}^{t_{0}+T_{\ell
}}\phi _{x}^{2}\left( \xi t,t\right) \text{ }dt.  \label{ct}
\end{equation}%
The minimal value of $T_{\ell },$ for which Inequality (\ref{ct}) holds, is
called the time of observability. Due to the finite speed of propagation,
one expects that $T_{\ell }>0$ depends on the initial length $L_{0}$ and\
also on the two speeds of expansion $\ell _{1}$ and $\ell _{2}$. This
dependence is simply denoted by subscript $\ell $ in the notation of $%
T_{\ell }$.

On the other hand, we consider the following boundary controllability
problem: given 
\begin{eqnarray}
(u^{0},u^{1}) &\in &L^{2}\left( I_{t_{0}}\right) \times H^{-1}\left(
I_{t_{0}}\right) ,  \label{u0} \\
(u_{T}^{0},u_{T}^{1}) &\in &L^{2}\left( I_{t_{0}+T}\right) \times
H^{-1}\left( I_{t_{0}+T}\right) ,  \label{uT}
\end{eqnarray}%
find a control function $v\in L^{2}\left( t_{0},t_{0}+T\right) ,$ acting at
one of the endpoints, say $x=\ell _{2}t,$ such that the solution of the
problem%
\begin{equation}
\left\{ 
\begin{array}{ll}
u_{tt}-u_{xx}=0,\ \smallskip  & \text{in }Q_{t_{0}+T}, \\ 
u\left( -\ell _{1}t,t\right) =0,\text{ \ \ }u\left( \ell _{2}t,t\right)
=v\left( t\right) ,\smallskip  & \text{for \ \ }t\in \left(
t_{0},t_{0}+T\right) , \\ 
u(x,t_{0})=u^{0}\left( x\right) ,\text{ \ \ }u_{t}\left( x,t_{0}\right)
=u^{1}\left( x\right) ,\text{ \ } & \text{for \ \ }x\in I_{t_{0}},%
\end{array}%
\right.   \tag{$CWP$}  \label{wavec}
\end{equation}%
satisfies also 
\begin{equation*}
u(x,t_{0}+T)=u_{T}^{0}\left( x\right) ,\text{ \ \ }u_{t}\left(
x,t_{0}+T\right) =u_{T}^{1}\left( x\right) ,\text{ \ \ for \ \ }x\in
I_{t_{0}+T}.
\end{equation*}%
Note that Problem (\ref{wavec}) admits a unique solution%
\begin{equation*}
u\in C([t_{0},t_{0}+T];L^{2}\left( I_{t}\right) )\cap
C^{1}([t_{0},t_{0}+T];H^{-1}\left( I_{t}\right) ).
\end{equation*}%
in the transposition sense, see \cite{Mir:96}.

We shall also pay attention to the problem of observability at both ends
where inequality (\ref{ct}) is replaced by 
\begin{equation}
E\left( t_{0}\right) \leq C(\tilde{T}_{\ell })\int_{t_{0}}^{t_{0}+\tilde{T}%
_{\ell }}\phi _{x}^{2}\left( -\ell _{1}t,t\right) +\phi _{x}^{2}\left( \ell
_{2}t,t\right) dt,  \label{ct2}
\end{equation}%
for some time $\tilde{T}_{\ell }$ and a constant $C(\tilde{T}_{\ell }).$
Then we consider the associated controllability problem at both ends. That
is to say, for any 
\begin{eqnarray}
(y^{0},y^{1}) &\in &L^{2}\left( I_{t_{0}}\right) \times H^{-1}\left(
I_{t_{0}}\right) ,  \label{y0} \\
(y_{T}^{0},y_{T}^{1}) &\in &L^{2}\left( I_{t_{0}+T}\right) \times
H^{-1}\left( I_{t_{0}+T}\right) ,  \label{yT}
\end{eqnarray}%
find two control functions $v_{1},v_{2}\in L^{2}\left( t_{0},t_{0}+T\right) $
such that the solution of 
\begin{equation}
\left\{ 
\begin{array}{ll}
y_{tt}-y_{xx}=0,\ \smallskip  & \text{in }Q_{t_{0}+T}, \\ 
y\left( -\ell _{1}t,t\right) =v_{1}\left( t\right) ,\text{ \ \ }y\left( \ell
_{2}t,t\right) =v_{2}\left( t\right) ,\smallskip  & \text{for \ \ }t\in
\left( t_{0},t_{0}+T\right) , \\ 
y(x,t_{0})=y^{0}\left( x\right) ,\text{ \ \ }y_{t}\left( x,t_{0}\right)
=y^{1}\left( x\right) ,\text{ \ \ \ \ \ } & \text{for \ \ }x\in I_{t_{0}},%
\end{array}%
\right.   \tag{$CWP2$}  \label{wavec2}
\end{equation}%
satisfies the final conditions 
\begin{equation*}
y(x,t_{0}+T)=y_{T}^{0}\left( x\right) ,\text{ \ \ }y_{t}\left(
x,t_{0}+T\right) =y_{T}^{1}\left( x\right) ,\text{ \ \ for \ \ }x\in
I_{t_{0}+T}.
\end{equation*}

Observability and controllability of the wave equation in noncylindrical
domains were considered by several authors. Bardos and Chen \cite{BaC:81}
obtained the interior exact controllability of the wave equation, in
noncylindrical domains, by a "controllability via stabilisation" argument.
Miranda \cite{Mir:96} used a change of variable to transform the
noncylindrical problem to a cylindrical one, shows the exact boundary
controllability by the Hilbert Uniqueness method (see \cite{Lion:88}) then,
going back to the noncylindrical problem, he obtain the desired results. In
recent years, there is a renewed interest in the observability and
controllability\ of such problems, see for instance \cite%
{CLG:13,LLC:15,SLL:15}.

The authors, in these cited works, relay on the multiplier method (see \cite%
{Komo:94}) to establish the energy estimates and inequalities necessary to
derive the observability and controllability results. In this work, we
present a different approach. It was inspired by the work of Balazs \cite%
{Bal:61} where he obtained the exact solution of the 1--d wave equation as
the sum of a generalised Fourier series, i.e., a countable set of orthogonal
functions in a weighted $L^{2}-$spaces. The key idea is that we analyse the
series representation of solution and use it to derive the desired energy
and observability estimates. Then using HUM, we establish the
controllability of the wave equation.

Fourier series approach in control theory of problems in cylindrical domains
is by now classic, see \cite{KoLo:05,Rus:78,Zua:05}. However, the use of
this approach for problems in noncylindrical problems seems to be new.
Recently we have successfully applied this approach \cite{Sen:18,Sen:18b} to the
wave equation in an interval with one moving endpoint $\left( 0,\ell
_{0}t\right) ,0<\ell _{0}<1.$ We showed that the boundary observability and
controllability at one endpoint, whether it is the fixed or the moving one,
holds in a sharp time $T_{0}=2L_{0}/\left( 1-\ell _{0}\right) $. In the
paper at hand, we consider a wave equation in an interval with two moving
endpoints. We show observability results at one endpoint and also at the two
endpoints. Once the observability is established, the controllability is
derived by HUM.

The main contribution of this work can be summarized as follows:

\begin{itemize}
\item The series formulas of the exact solution of (\ref{wave}) was given by
Balazs \cite{Bal:61}. He managed to calculate the coefficients of the series
when only one endpoint is moving, e.g. $\ell _{1}=0.$ If the two endpoints
are moving, i.e. $\ell _{1}>0$ and $\ell _{2}>0,$ the question remained open
as far as we know. Here we show explicitly how to calculate these
coefficients in function of the initial data $\phi ^{0}$and\ $\phi ^{1},$
see Theorem \ref{thexist1}.

\item The decay of the energy of the solution of (\ref{wave}) is not
conserved and decay at the precise rate $1/t,$ see Theorem \ref{th1}.

\item The observability of (\ref{wave}) and the controllability of (\ref%
{wavec}), at an endpoint, holds if and only if 
\begin{equation}
T\geq T_{\ell }:=\frac{2L_{0}}{\left( 1-\ell _{1}\right) \left( 1-\ell
_{2}\right) }.  \label{T1}
\end{equation}%
This value of $T_{\ell }$ holds whether the endpoint is $x=-\ell _{1}t$ or $%
x=\ell _{2}t,$ see Theorems \ref{thobs1} and \ref{thc1}.

\item Problem (\ref{wave}) is observable and Problem (\ref{wavec2}) is
controllable, at both endpoints, if and only if 
\begin{equation}
T\geq \tilde{T}_{\ell }:=\max \left\{ \frac{L_{0}}{\left( 1-\ell _{1}\right) 
},\frac{L_{0}}{\left( 1-\ell _{2}\right) }\right\} ,  \label{T2}
\end{equation}%
see Theorems \ref{thobs2} and \ref{thc2}.

\item The values of $T_{\ell },\tilde{T}_{\ell }$ and all the obtained
constants, in the direct and inverse inequalities, do not depend on the
choice of the initial time $t_{0}$.
\end{itemize}

These results are new to our knowledge. In particular, the observability and
the controllability at both endpoints (\ref{wavec2}) seem to be not
considered before. As mentioned in \cite{Sen:18,Sen:18b}, when an endpoint is a
fixed one, e.g. $\ell _{1}=0,$ the obtained time of observability is sharp
and improves other results obtained by the multiplier method, see for
instance \cite{CLG:13,CJW:15,SLL:15}.

The remainder of this paper is organized as follows. For the convenience of
the reader, we recall in section 2 some definitions and establish some
lemmas that are necessary for the sequel. In section 3, we show how to
calculate the coefficients of the series formula that gives the solution of (%
\ref{wave}). In section 4, we derive a sharp estimate for the energy of the
solution of (\ref{wave}). Then, the boundary observability and
controllability at one and at both endpoints are considered in the fifth and
sixth sections.

\section{Preliminaries}

First, let us fix the notation of some constants that will frequently appear
in the sequel.%
\begin{gather*}
l:=\min \left\{ \ell _{1},\ell _{2}\right\} ,\text{ \ \ \ }L:=\max \left\{
\ell _{1},\ell _{2}\right\} , \\
\alpha _{\ell }:=\frac{1+\ell _{1}}{1-\ell _{2}},\text{ \ \ }\beta _{\ell }:=%
\frac{1+\ell _{2}}{1-\ell _{1}},\text{ \ \ }\kappa _{\ell }:=\frac{2}{\log
\left( \alpha _{\ell }\beta _{\ell }\right) } \\
L_{1}:=\left( 1-\ell _{2}\right) \alpha _{\ell }\beta _{\ell }-1,\text{ \ \
\ \ }L_{2}:=\left( 1-\ell _{1}\right) \alpha _{\ell }\beta _{\ell }-1.
\end{gather*}%
In these notation, $T_{\ell }$ and $\tilde{T}_{\ell }$, defined in (\ref{T1}%
) and\ (\ref{T2}), can be expressed as 
\begin{equation*}
T_{\ell }=\left( \alpha _{\ell }\beta _{\ell }-1\right) t_{0}\ \ \text{and }%
\tilde{T}_{\ell }=\left( \max \left\{ \alpha _{\ell },\beta _{\ell }\right\}
-1\right) t_{0}.
\end{equation*}%
Taking into account Assumption (\ref{tlike}), we can also check that 
\begin{equation*}
\alpha _{\ell }>1,\text{ }\beta _{\ell }>1,\text{ }0<\kappa _{\ell }<+\infty 
\text{ \ and \ }-L_{1}<-\left( \ell _{1}+\ell _{2}\right) <-\ell _{1}<\ell
_{2}<\ell _{1}+\ell _{2}<L_{2}.
\end{equation*}

Let $a,b\in 
%TCIMACRO{\U{211d} }%
%BeginExpansion
\mathbb{R}
%EndExpansion
$ such that $b>a,$ and consider a positive (weight) function $\rho :\left(
a,b\right) \rightarrow 
%TCIMACRO{\U{211d} }%
%BeginExpansion
\mathbb{R}
%EndExpansion
$. In the sequel, we denote by $L^{2}\left( a,b,\rho ds\right) $ the
weighted Hilbert space of measurable complex valued functions on $%
%TCIMACRO{\U{211d} }%
%BeginExpansion
\mathbb{R}
%EndExpansion
,$ endowed by the scalar product%
\begin{equation*}
\int\limits_{a}^{b}f\left( s\right) \overline{g\left( s\right) }\rho \left(
s\right) ds
\end{equation*}%
and its associated norm. As usual, we drop $\rho ds$ in the $L^{2}$ space
notation if $\rho =1$.

If the set of functions $\left\{ \varphi _{n}\right\} _{n\in 
%TCIMACRO{\U{2124} }%
%BeginExpansion
\mathbb{Z}
%EndExpansion
}$ is a complete orthonormal basis of $L^{2}\left( a,b,\rho ds\right) ,$
then every function $f\in L^{2}\left( a,b,\rho ds\right) $ can be written as 
\begin{equation}
f\left( s\right) =\sum_{n\in 
%TCIMACRO{\U{2124} }%
%BeginExpansion
\mathbb{Z}
%EndExpansion
}a_{n}\varphi _{n}\left( s\right) ,\text{ \ where \ }a_{n}:=\int_{a}^{b}f%
\left( s\right) \overline{\varphi _{n}\left( s\right) }\rho (s)ds.
\end{equation}%
In particular, the following Parseval equality holds%
\begin{equation}
\int_{a}^{b}\left\vert f(s)\right\vert ^{2}\rho (s)ds=\sum_{n\in 
%TCIMACRO{\U{2124} }%
%BeginExpansion
\mathbb{Z}
%EndExpansion
}\left\vert a_{n}\right\vert ^{2},  \label{prsvl}
\end{equation}%
see for instance \cite{BiR:89,Pin:09}.

Let us check the completeness of some sets of functions used in the next
sections.

\begin{lemma}
\label{lm01}Let $M$ be a positive integer and let $a,b\in 
%TCIMACRO{\U{211d} }%
%BeginExpansion
\mathbb{R}
%EndExpansion
^{\ast }$ satisfying $\frac{b}{a}=\left( \alpha _{\ell }\beta _{\ell
}\right) ^{M}.$ Then, the set of functions 
\begin{equation}
\left\{ \sqrt{\kappa _{\ell }/2M}\ e^{i\pi n\left( \kappa _{\ell }/M\right)
\log z}\right\} _{n\in 
%TCIMACRO{\U{2124} }%
%BeginExpansion
\mathbb{Z}
%EndExpansion
}  \label{base}
\end{equation}%
is complete and orthonormal in the space $L^{2}\left( a,b,dz/z\right) .$
\end{lemma}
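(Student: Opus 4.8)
The plan is to reduce the statement to the classical fact that the complex exponentials form a complete orthonormal basis of $L^{2}$ on a bounded interval, by means of the logarithmic change of variables $s=\log z$. First I would record that the hypothesis $b/a=\left( \alpha _{\ell }\beta _{\ell }\right) ^{M}$ together with $\alpha _{\ell }\beta _{\ell }>1$ forces $a$ and $b$ to have the same sign, and since the definition of the weighted space requires $b>a$, we must in fact have $0<a<b$. On this interval the weight $1/z$ is positive, so the space $L^{2}\left( a,b,dz/z\right)$ is well defined and $\log z$ is real-valued.

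Next I would introduce the map $U\colon L^{2}\left( a,b,dz/z\right)\to L^{2}\left( \log a,\log b\right)$ given by $\left( Uf\right)(s)=f\left( e^{s}\right)$. Since $d\left( \log z\right)=dz/z$, the substitution $z=e^{s}$ turns the weighted measure $dz/z$ into the ordinary Lebesgue measure $ds$, so $U$ is a unitary isomorphism of Hilbert spaces. The length of the target interval is
\begin{equation*}
\Lambda :=\log b-\log a=\log \left( b/a\right) =M\log \left( \alpha _{\ell }\beta _{\ell }\right) =\frac{2M}{\kappa _{\ell }},
\end{equation*}
by the very definition $\kappa _{\ell }=2/\log \left( \alpha _{\ell }\beta _{\ell }\right)$. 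Under $U$ the functions in \eqref{base} become $\sqrt{\kappa _{\ell }/2M}\,e^{i\pi n\left( \kappa _{\ell }/M\right) s}$, which is precisely $\Lambda ^{-1/2}e^{2\pi ins/\Lambda }$, because $\pi \kappa _{\ell }/M=2\pi /\Lambda$ and $\sqrt{\kappa _{\ell }/2M}=\Lambda ^{-1/2}$.

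It then suffices to invoke the classical theory of Fourier series: the family $\left\{ \Lambda ^{-1/2}e^{2\pi ins/\Lambda }\right\} _{n\in \mathbb{Z}}$ is a complete orthonormal basis of $L^{2}\left( \log a,\log b\right)$ for an interval of length $\Lambda$. Since $U$ is unitary, it carries this basis back to a complete orthonormal basis of $L^{2}\left( a,b,dz/z\right)$, namely the set \eqref{base}, which is the assertion. As a cross-check one can also verify orthonormality directly: after the substitution the inner product of the $n$th and $m$th functions equals $\tfrac{\kappa _{\ell }}{2M}\int_{\log a}^{\log b}e^{i\pi \left( n-m\right)\left( \kappa _{\ell }/M\right) s}\,ds$, which gives $1$ when $n=m$ and vanishes when $n\neq m$ because the phase $\pi \left( n-m\right)\left( \kappa _{\ell }/M\right)\Lambda =2\pi \left( n-m\right)$ is an integer multiple of $2\pi$. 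I do not expect a genuine obstacle here; the only point that must be handled with care is confirming that the normalization constant and the frequency $\pi \kappa _{\ell }/M$ were chosen so that the period of the exponentials matches $\Lambda$ exactly, i.e.\ that $\kappa _{\ell }$ is tuned precisely so the logarithmic image has length $2M/\kappa _{\ell }$, and that the change of variables is truly an isometry onto the \emph{unweighted} space.
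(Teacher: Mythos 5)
Your proof is correct and follows essentially the same route as the paper: a logarithmic change of variables turns the weighted space into an unweighted $L^{2}$ space on an interval whose length is matched by the frequency $\pi\kappa_{\ell}/M$, reducing the claim to the classical completeness of the trigonometric system. The only cosmetic difference is that you map onto $\left(\log a,\log b\right)$ while the paper normalizes to $\left(0,2\right)$ via $s=\frac{\kappa_{\ell}}{M}\log\frac{z}{a}$.
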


\begin{proof}
The orthonormality follows easily. The completeness holds if and only if no
nonzero function in $L^{2}\left( a,b,dz/z\right) $ is orthogonal to all the $%
e^{i\pi n\left( \kappa _{\ell }/M\right) \log z}$, $n\in 
%TCIMACRO{\U{2124} }%
%BeginExpansion
\mathbb{Z}
%EndExpansion
$. Indeed, let $f\in L^{2}\left( a,b,dz/z\right) $ and consider the change
of variable%
\begin{equation*}
s=\frac{\kappa _{\ell }}{M}\log \frac{z}{a},\text{ \ \ \ \ \ }a\leq z\leq b,
\end{equation*}%
then we have%
\begin{equation*}
\int\limits_{a}^{b}f\left( z\right) e^{-i\pi n\left( \kappa _{\ell
}/M\right) \log z}\text{ }\frac{dz}{z}=0\Leftrightarrow
\int\limits_{0}^{2}f\left( ae^{Ms/\kappa _{\ell }}\right) e^{i\pi ns}\text{ }%
ds=0.
\end{equation*}%
for every $n\in 
%TCIMACRO{\U{2124} }%
%BeginExpansion
\mathbb{Z}
%EndExpansion
.$ A well-known result in analysis is that the set of functions $\left\{
e^{i\pi ns}\right\} _{n\in 
%TCIMACRO{\U{2124} }%
%BeginExpansion
\mathbb{Z}
%EndExpansion
}$ is complete and orthogonal in $L^{2}\left( 0,2\right) .$ Thus $%
f(ae^{Ms/\kappa _{\ell }})=0,$ for a.e. $s\in \left( 0,2\right) $, i.e.%
\begin{equation*}
f\left( z\right) =0,\text{ \ for a.e. }z\in \left( a,b\right) .
\end{equation*}%
This ends the proof.
\end{proof}

\begin{remark}
\label{rmk00}Taking $a=t_{0},b=\left( \alpha _{\ell }\beta _{\ell }\right)
^{M}t_{0},$ then the set \emph{(\ref{base})} is a complete orthonormal set
in the space $L^{2}\left( t_{0},\left( \alpha _{\ell }\beta _{\ell }\right)
^{M}t_{0},dt/t\right) .$
\end{remark}

\begin{remark}
Since $b>a>0,$ then $0<\frac{1}{b}\leq \frac{1}{z}\leq \frac{1}{a},$ for $%
z\in \left( a,b\right) $ and 
\begin{equation*}
\frac{1}{b}\left\Vert \text{\thinspace }\cdot \text{\thinspace }\right\Vert
_{L^{2}\left( a,b\right) }\leq \left\Vert \text{\thinspace }\cdot \text{%
\thinspace }\right\Vert _{L^{2}\left( a,b,dz/z\right) }\leq \frac{1}{a}%
\left\Vert \text{\thinspace }\cdot \text{\thinspace }\right\Vert
_{L^{2}\left( a,b\right) }.
\end{equation*}%
This means that $f\in L^{2}\left( a,b\right) $ if and only if $f\in
L^{2}\left( a,b,dz/z\right) .$
\end{remark}

\begin{lemma}
\label{lm02}For every $t\geq t_{0},$ the set of functions 
\begin{equation}
\left\{ \sqrt{\kappa _{\ell }/2}\ e^{in\pi \kappa _{\ell }\log \left(
t+x\right) }\right\} _{n\in 
%TCIMACRO{\U{2124} }%
%BeginExpansion
\mathbb{Z}
%EndExpansion
}\text{, \ \ \ }\left( \text{resp. }\left\{ \sqrt{\kappa _{\ell }/2}\
e^{in\pi \kappa _{\ell }\log \left( t-x\right) }\right\} _{n\in 
%TCIMACRO{\U{2124} }%
%BeginExpansion
\mathbb{Z}
%EndExpansion
}\right)   \label{base+}
\end{equation}%
is complete and orthonormal in the space $$L^{2}\left( -\ell _{1}t,L_{2}t,%
\frac{dx}{t+x}\right) , \ \left( \text{resp. }L^{2}\left( -L_{1}t,\ell
_{2}t,\frac{dx}{t-x}\right) \right). $$
\end{lemma}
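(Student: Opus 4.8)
The plan is to reduce both assertions to Lemma~\ref{lm01} with $M=1$ by an explicit change of variable, exploiting the fact that a unitary substitution carries the orthonormal basis supplied by that lemma onto the desired set. For the first set I would substitute $z=t+x$. As $x$ ranges over $\left(-\ell_1 t, L_2 t\right)$, the variable $z$ ranges over the interval with endpoints $a=(1-\ell_1)t$ and $b=(1+L_2)t$. Using the definition $L_2=(1-\ell_1)\alpha_\ell\beta_\ell-1$ one has $1+L_2=(1-\ell_1)\alpha_\ell\beta_\ell$, hence $b/a=\alpha_\ell\beta_\ell=(\alpha_\ell\beta_\ell)^{1}$, so the pair $(a,b)$ fulfils the hypothesis of Lemma~\ref{lm01} with $M=1$.

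Since $z=t+x$ is a bijection from $\left(-\ell_1 t, L_2 t\right)$ onto $(a,b)$ with $dz=dx$ and $dz/z=dx/(t+x)$, the correspondence $g\mapsto g(t+\cdot)$ is a unitary isomorphism from $L^2(a,b,dz/z)$ onto $L^2\!\left(-\ell_1 t, L_2 t, dx/(t+x)\right)$, as
\begin{equation*}
\int_a^b \left|g(z)\right|^2 \frac{dz}{z}=\int_{-\ell_1 t}^{L_2 t}\left|g(t+x)\right|^2\frac{dx}{t+x}.
\end{equation*}
Under this map the complete orthonormal basis $\bigl\{\sqrt{\kappa_\ell/2}\,e^{i\pi n\kappa_\ell \log z}\bigr\}_{n\in\mathbb{Z}}$ of $L^2(a,b,dz/z)$ given by Lemma~\ref{lm01} is carried exactly onto $\bigl\{\sqrt{\kappa_\ell/2}\,e^{in\pi\kappa_\ell\log(t+x)}\bigr\}_{n\in\mathbb{Z}}$; since a unitary map sends complete orthonormal sets to complete orthonormal sets, the first claim follows.

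The second set is handled symmetrically with $z=t-x$: as $x$ ranges over $\left(-L_1 t,\ell_2 t\right)$ the variable $z$ ranges over $\left((1-\ell_2)t,(1+L_1)t\right)$, and the definition $L_1=(1-\ell_2)\alpha_\ell\beta_\ell-1$ again yields $b/a=\alpha_\ell\beta_\ell$. Now $dz=-dx$, and the accompanying reversal of orientation turns $dz/z$ into $dx/(t-x)$, so the same isometry argument applies. The computations are elementary; the only point requiring attention is verifying that the transformed intervals have the prescribed ratio $b/a=\alpha_\ell\beta_\ell$, which is precisely where the definitions of $L_1$ and $L_2$ enter. I do not anticipate any genuine obstacle beyond this bookkeeping.
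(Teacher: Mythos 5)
Your proof is correct and follows essentially the same route as the paper: a change of variable that transports the problem to an interval where a known complete orthonormal system is available. The only cosmetic difference is that you substitute $z=t\pm x$ and invoke Lemma~\ref{lm01} directly with $M=1$ (after checking $b/a=\alpha_{\ell}\beta_{\ell}$ via the definitions of $L_{1},L_{2}$), whereas the paper composes this with the logarithmic substitution to land on the classical basis $\{e^{i\pi ns}\}$ on $(0,2)$ and repeats the argument of Lemma~\ref{lm01}.
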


\begin{proof}
For every $t\geq t_{0},$ we use the change of variable%
\begin{equation*}
s=\kappa _{\ell }\log \frac{t+x}{t\left( 1-\ell _{1}\right) },\ \ x\in
\left( -\ell _{1}t,L_{2}t\right) ,\text{ \ \ }\left( \text{resp. }s=2-\kappa
_{\ell }\log \frac{t-x}{t\left( 1-\ell _{2}\right) }\right) ,\ \ x\in \left(
-L_{1}t,\ell _{2}t\right) ,
\end{equation*}%
to obtain $s\in \left( 0,2\right) $. The result follows for by arguing as in
the proof of Lemma \ref{lm01}.
\end{proof}

\begin{remark}
\label{rmk2}Note that \emph{(\ref{tlike})} ensures that the weight functions 
$1/\left( t\pm x\right) $ are positive, hence%
\begin{eqnarray*}
\frac{1}{t\left( 1+L_{2}\right) }\left\Vert \text{\thinspace }\cdot \text{%
\thinspace }\right\Vert _{L^{2}\left( -\ell _{1}t,L_{2}t\right) } &\leq
&\left\Vert \text{\thinspace }\cdot \text{\thinspace }\right\Vert
_{L^{2}\left( -\ell _{1}t,L_{2}t,\frac{dx}{t+x}\right) }\leq \frac{1}{%
t\left( 1-\ell _{1}\right) }\left\Vert \text{\thinspace }\cdot \text{%
\thinspace }\right\Vert _{L^{2}\left( -\ell _{1}t,L_{2}t\right) },\text{ \ \ 
}t\geq t_{0},\smallskip \\
\frac{1}{t\left( 1+L_{1}\right) }\left\Vert \text{\thinspace }\cdot \text{%
\thinspace }\right\Vert _{L^{2}\left( -L_{1}t,\ell _{2}t\right) } &\leq
&\left\Vert \text{\thinspace }\cdot \text{\thinspace }\right\Vert
_{L^{2}\left( -L_{1}t,\ell _{2}t,\frac{dx}{t-x}\right) }\leq \frac{1}{%
t\left( 1-\ell _{2}\right) }\left\Vert \text{\thinspace }\cdot \text{%
\thinspace }\right\Vert _{L^{2}\left( -L_{1}t,\ell _{2}t\right) },\text{ \ \ 
}t\geq t_{0}
\end{eqnarray*}%
and therefore $f$ belongs to $L^{2}\left( -\ell _{1}t,L_{2}t,\frac{dx}{t+x}%
\right) $ or $L^{2}\left( -L_{1}t,\ell _{2}t,\frac{dx}{t-x}\right) $ if and
only if $f\in L^{2}\left( -\ell _{1}t,L_{2}t\right) $ or $f\in L^{2}\left(
-L_{1}t,\ell _{2}t\right) $ respectively.
\end{remark}

\section{Exact solution}

According to Balazs \cite{Bal:61}, the exact solution of Problem (\ref{wave}%
) is$\ $given by the series%
\begin{equation}
\phi (x,t)=\sum_{n\in 
%TCIMACRO{\U{2124} }%
%BeginExpansion
\mathbb{Z}
%EndExpansion
^{\ast }}c_{n}\left( e^{in\pi \kappa _{\ell }\log \left( t+x\right)
}-e^{in\pi \kappa _{\ell }\log \left( \frac{1+\ell _{2}}{1-\ell _{2}}\right)
}e^{in\pi \kappa _{\ell }\log \left( t-x\right) }\right) ,\ \ x\in \left(
-\ell _{1}t,\ell _{2}t\right) ,t\geq t_{0},  \label{exact0}
\end{equation}%
where $c_{n}$ are complex numbers, independent of $t$. Unfortunately, he
showed how to calculate the coefficients $c_{n}$, in function of the initial
data, only when the interval has one moving endpoint$.$ If the two endpoints
are moving, i.e. $\ell _{1}>0$ and $\ell _{2}>0$, the determination of $%
c_{n} $ turns out to be a bit tricky as we will see below.

First, we note that the two orthogonal sets considered in (\ref{base+})
appear in the series formula (\ref{exact0}). To use their orthogonality
properties, we need to extend the function $\phi ,$ defined only on $\left(
-\ell _{1}t,\ell _{2}t\right) ,$ to the intervals $\left( -L_{1}t,\ell
_{2}t\right) $ and $\left( -\ell _{1}t,L_{2}t\right) $, (recall that $%
-L_{1}<-\ell _{1}<\ell _{2}<L_{2}$). This extension is realised as follows%
\begin{equation}
\tilde{\phi}(x,t)=\left\{ 
\begin{array}{ll}
-\phi \left( -t+\frac{1-\ell _{1}}{1+\ell _{1}}\left( t-x\right) ,t\right) 
& x\in \left( -L_{1}t,-\ell _{1}t\right) ,\smallskip  \\ 
\phi \left( x,t\right)  & x\in \left( -\ell _{1}t,\ell _{2}t\right)
,\smallskip  \\ 
-\phi \left( t-\frac{1-\ell _{2}}{1+\ell _{2}}\left( t+x\right) ,t\right) 
\text{ \ \ } & x\in \left( \ell _{2}t,L_{2}t\right) .%
\end{array}%
\right.   \label{phi0+}
\end{equation}%
The obtained function is well defined since the first variable of $\phi $
remain in the interval $\left( -\ell _{1}t,\ell _{2}t\right) .$ This
extension ensures some odd-like symmetry in the variable $x$ for $\tilde{\phi%
}$ with respect to the point $x=-\ell _{1}t$ on the interval $\left(
-L_{1}t,\ell _{2}t\right) $ and with respect to the point $x=\ell _{2}t$ on
the interval $\left( -\ell _{1}t,L_{2}t\right) .$ In particular $\tilde{\phi}(-\ell _{1}t,t)=\tilde{\phi}(\ell
_{2}t,t)=0,$ hence the boundary condition remain satisfied, for every $t\geq
t_{0}$.

Derivation in time does not effect the odd-like symmetry in $x,$ hence $\phi
_{t}$ is extended as follows%
\begin{equation}
\tilde{\phi}_{t}(x,t)=\left\{ 
\begin{array}{ll}
-\frac{1-\ell _{1}}{1+\ell _{1}}\phi _{t}\left( -t+\frac{1-\ell _{1}}{1+\ell
_{1}}\left( t-x\right) ,t\right) , & x\in \left( -L_{1}t,-\ell _{1}t\right)
,\smallskip  \\ 
\phi _{t}\left( x,t\right) , & x\in \left( -\ell _{1}t,\ell _{2}t\right)
,\smallskip  \\ 
-\frac{1-\ell _{2}}{1+\ell _{2}}\phi _{t}\left( t-\frac{1-\ell _{2}}{1+\ell
_{2}}\left( t+x\right) ,t\right) ,\text{ \ \ \ \ } & x\in \left( \ell
_{2}t,L_{2}t\right) .%
\end{array}%
\right.   \label{phi1+}
\end{equation}%
Taking the derivative of (\ref{phi0+}) in $x$, we obtain%
\begin{equation}
\tilde{\phi}_{x}(x,t)=\left\{ 
\begin{array}{ll}
\frac{1-\ell _{1}}{1+\ell _{1}}\phi _{x}\left( -t+\frac{1-\ell _{1}}{1+\ell
_{1}}\left( t-x\right) ,t\right) , & x\in \left( -L_{1}t,-\ell _{1}t\right)
,\smallskip  \\ 
\phi _{x}\left( x,t\right) , & x\in \left( -\ell _{1}t,\ell _{2}t\right)
,\smallskip  \\ 
\frac{1-\ell _{2}}{1+\ell _{2}}\phi _{x}\left( t-\frac{1-\ell _{2}}{1+\ell
_{2}}\left( t+x\right) ,t\right) ,\text{ \ \ \ \ } & x\in \left( \ell
_{2}t,L_{2}t\right) .%
\end{array}%
\right.   \label{phi0x+}
\end{equation}%
Thus $\tilde{\phi}_{x}$ satisfies an even-like symmetry in $x$ with respect
to the point $x=-\ell _{1}t$ on the interval $\left( -L_{1}t,\ell
_{2}t\right) $ and with respect to the point $x=\ell _{2}t$ on the interval $%
\left( -\ell _{1}t,L_{2}t\right) .$

\begin{remark}
\label{rmkodd}If $\ell _{1}=0,$ then $L_{1}=\ell _{2}$ and $\tilde{\phi},%
\tilde{\phi}_{t}$ are odd functions and $\tilde{\phi}_{x}$ is an even
function on the interval $\left( -\ell _{2}t,\ell _{2}t\right) ,\forall
t\geq t_{0}.$ Similarly, if $\ell _{2}=0,$ then $L_{2}=\ell _{1}$ and $%
\tilde{\phi}^{0},\tilde{\phi}^{1}$ are odd function and $\tilde{\phi}_{x}$
is an even function on the interval $\left( -\ell _{1}t,\ell _{1}t\right) .$
This justifies the terminology \textquotedblleft odd-like\textquotedblright\
and \textquotedblleft even-like\textquotedblright\ used above.
\end{remark}

The next theorem shows how the calculate the coefficient of the series (\ref%
{exact0}).

\begin{theorem}
\label{thexist1}Under the assumptions \emph{(\ref{tlike}) }and \emph{(\ref%
{ic}),} The solution of Problem \emph{(\ref{wave})} is\emph{\ }given by the
series \emph{(\ref{exact0}). }The coefficients $c_{n}$ are given by any of
the two following formula%
\begin{eqnarray}
c_{n} &=&\frac{1}{2\pi in\sqrt{2\kappa _{\ell }}}\int\limits_{-\ell
_{1}t_{0}}^{L_{2}t_{0}}\left( \tilde{\phi}_{x}^{0}+\tilde{\phi}^{1}\right)
e^{-in\pi \kappa _{\ell }\log \left( t_{0}+x\right) }dx\text{, \ \ if\ }n\in 
%TCIMACRO{\U{2124} }%
%BeginExpansion
\mathbb{Z}
%EndExpansion
^{\ast },  \label{cn+} \\
&=&\frac{e^{in\pi \kappa _{\ell }\log \frac{1+\ell _{1}}{1-\ell _{1}}}}{2\pi
in\sqrt{2\kappa _{\ell }}}\int\limits_{-L_{1}t_{0}}^{\ell _{2}t_{0}}\left( 
\tilde{\phi}_{x}^{0}-\tilde{\phi}^{1}\right) e^{-in\pi \kappa _{\ell }\log
\left( t_{0}-x\right) }dx\text{, \ \ if\ }n\in 
%TCIMACRO{\U{2124} }%
%BeginExpansion
\mathbb{Z}
%EndExpansion
^{\ast }.  \label{cn-}
\end{eqnarray}%
where\emph{\ }$\tilde{\phi}^{1}$ and $\tilde{\phi}_{x}^{0}$ are given by 
\emph{(\ref{phi1+}) }and \emph{(\ref{phi0x+}). }Moreover, the sum $%
\sum_{n\in 
%TCIMACRO{\U{2124} }%
%BeginExpansion
\mathbb{Z}
%EndExpansion
^{\ast }}\left\vert nc_{n}\right\vert ^{2}$ is finite and is given by any of
the two formula%
\begin{eqnarray}
\sum_{n\in 
%TCIMACRO{\U{2124} }%
%BeginExpansion
\mathbb{Z}
%EndExpansion
^{\ast }}\left\vert nc_{n}\right\vert ^{2} &=&\frac{1}{8\pi ^{2}\kappa
_{\ell }}\int\limits_{-\ell _{1}t_{0}}^{L_{2}t_{0}}\left\vert \tilde{\phi}%
_{x}^{0}+\tilde{\phi}^{1}\right\vert ^{2}\frac{dx}{t_{0}+x}  \label{ncn+} \\
&=&\frac{1}{8\pi ^{2}\kappa _{\ell }}\int\limits_{-L_{1}t_{0}}^{\ell
_{2}t_{0}}\left\vert \tilde{\phi}_{x}^{0}-\tilde{\phi}^{1}\right\vert ^{2}%
\frac{dx}{t_{0}-x}.  \label{ncn-}
\end{eqnarray}
\end{theorem}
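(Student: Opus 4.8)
The plan is to take as given, following Balazs \cite{Bal:61}, that the unique solution in the class (\ref{solreg}) has the traveling--wave form $\phi(x,t)=f(t+x)+g(t-x)$ with $f(y)=\sum_{n\in\mathbb{Z}^{\ast}}c_{n}e^{in\pi\kappa_{\ell}\log y}$, and that the boundary condition at $x=\ell_{2}t$ forces $g(y)=-f\!\left(\frac{1+\ell_{2}}{1-\ell_{2}}y\right)$; substituting this into $f(t+x)+g(t-x)$ reproduces exactly (\ref{exact0}). Each summand solves the wave equation (it is a function of $t+x$ plus a function of $t-x$) and vanishes on \emph{both} moving ends: at $x=\ell_{2}t$ by the very choice of the phase $e^{in\pi\kappa_{\ell}\log\frac{1+\ell_{2}}{1-\ell_{2}}}$, and at $x=-\ell_{1}t$ because $\kappa_{\ell}=2/\log(\alpha_{\ell}\beta_{\ell})$ makes $e^{-in\pi\kappa_{\ell}\log(\alpha_{\ell}\beta_{\ell})}=e^{-2\pi in}=1$. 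The $n=0$ term drops out since the two exponentials then coincide, which is why the sum runs over $\mathbb{Z}^{\ast}$. Hence the whole task reduces to identifying the $c_{n}$ from $\phi^{0},\phi^{1}$ and proving $\sum|nc_{n}|^{2}<\infty$.

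The key step is to convert the two orthonormal families of Lemma \ref{lm02} into projectors for the $c_{n}$. From $\phi=f(t+x)+g(t-x)$ one has $\phi_{x}+\phi_{t}=2f'(t+x)$ and $\phi_{x}-\phi_{t}=-2g'(t-x)$. I would first verify the \emph{gluing identity} that the extended field satisfies $\tilde{\phi}_{x}+\tilde{\phi}_{t}=2f'(t+x)$ on the whole enlarged interval $(-\ell_{1}t,L_{2}t)$, and symmetrically $\tilde{\phi}_{x}-\tilde{\phi}_{t}=-2g'(t-x)$ on $(-L_{1}t,\ell_{2}t)$. On the added piece $(\ell_{2}t,L_{2}t)$ this is a direct computation: inserting the reflection formulas (\ref{phi1+}) and (\ref{phi0x+}) gives $\tilde{\phi}_{x}+\tilde{\phi}_{t}=\frac{1-\ell_{2}}{1+\ell_{2}}\bigl(\phi_{x}-\phi_{t}\bigr)(x',t)$ with $x'=t-\frac{1-\ell_{2}}{1+\ell_{2}}(t+x)$, and then $t-x'=\frac{1-\ell_{2}}{1+\ell_{2}}(t+x)$ combined with $g'(y)=-\frac{1+\ell_{2}}{1-\ell_{2}}f'\!\left(\frac{1+\ell_{2}}{1-\ell_{2}}y\right)$ collapses the right--hand side to precisely $2f'(t+x)$. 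This verification---that the odd--like extension welds the reflected waves into a single pure right--mover---is the main obstacle; everything after it is bookkeeping. (Term--by--term differentiation is justified by the regularity in (\ref{solreg}), or one may phrase the argument entirely at the level of the complete expansions and close it by uniqueness.)

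Granting the gluing, I would set $t=t_{0}$, so that on $(-\ell_{1}t_{0},L_{2}t_{0})$ one has $\tilde{\phi}_{x}^{0}+\tilde{\phi}^{1}=2f'(t_{0}+x)=2\sum_{n}c_{n}\,in\pi\kappa_{\ell}\,(t_{0}+x)^{-1}e^{in\pi\kappa_{\ell}\log(t_{0}+x)}$. Multiplying by $(t_{0}+x)$ removes the chain--rule weight and exhibits $(t_{0}+x)(\tilde{\phi}_{x}^{0}+\tilde{\phi}^{1})$ as a genuine expansion in the complete orthonormal basis $\{\sqrt{\kappa_{\ell}/2}\,e^{in\pi\kappa_{\ell}\log(t_{0}+x)}\}$ of $L^{2}(-\ell_{1}t_{0},L_{2}t_{0},\frac{dx}{t_{0}+x})$ supplied by Lemma \ref{lm02} at $t=t_{0}$. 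Taking the inner product against $e^{in\pi\kappa_{\ell}\log(t_{0}+x)}$ and solving for $c_{n}$ yields (\ref{cn+}) once the normalizing constants are collected; running the same argument with the second family on $(-L_{1}t_{0},\ell_{2}t_{0})$ gives (\ref{cn-}). The two expressions agree because the phase $e^{in\pi\kappa_{\ell}\log\frac{1+\ell_{1}}{1-\ell_{1}}}$ equals $e^{-in\pi\kappa_{\ell}\log\frac{1+\ell_{2}}{1-\ell_{2}}}$, again by $e^{2\pi in}=1$ through the identity $\tfrac{1+\ell_{1}}{1-\ell_{1}}\tfrac{1+\ell_{2}}{1-\ell_{2}}=\alpha_{\ell}\beta_{\ell}$.

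Finally, Parseval's identity (\ref{prsvl}) applied to the same expansion expresses $\sum_{n}|nc_{n}|^{2}$ as a constant multiple of the weighted $L^{2}$--norm of $\tilde{\phi}_{x}^{0}+\tilde{\phi}^{1}$, which is the content of (\ref{ncn+}); the second family gives (\ref{ncn-}), and the index $n=0$ is harmless since $nc_{n}=0$ there. Finiteness of the sum is where the hypotheses (\ref{ic}) enter: $\phi^{0}\in H_{0}^{1}(I_{t_{0}})$ and $\phi^{1}\in L^{2}(I_{t_{0}})$ make $\phi_{x}^{0}+\phi^{1}\in L^{2}(I_{t_{0}})$, the condition $\phi^{0}\in H_{0}^{1}$ guaranteeing that the odd--like reflection is well defined at the endpoints; since the reflections in (\ref{phi1+}) and (\ref{phi0x+}) are changes of variable with Jacobians $\tfrac{1-\ell_{i}}{1+\ell_{i}}$ bounded above and below (because $0\le\ell_{1},\ell_{2}<1$), the extended $\tilde{\phi}_{x}^{0}+\tilde{\phi}^{1}$ lies in $L^{2}$ on the enlarged intervals, and Remark \ref{rmk2} bounds the weighted norm by the unweighted one. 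Uniqueness from (\ref{solreg}) then certifies that the series built from these $c_{n}$ is indeed the solution, completing the proof.
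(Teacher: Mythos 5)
Your proposal is correct and follows essentially the same route as the paper's proof: both rest on the observation that the odd-like extensions turn $(t_{0}+x)(\tilde{\phi}_{x}^{0}+\tilde{\phi}^{1})$ and $(t_{0}-x)(\tilde{\phi}_{x}^{0}-\tilde{\phi}^{1})$ into single expansions in the complete orthonormal systems of Lemma \ref{lm02} over the enlarged intervals, after which orthogonality yields (\ref{cn+})--(\ref{cn-}) and Parseval yields (\ref{ncn+})--(\ref{ncn-}). The only difference is presentational: you organize the gluing computation through the d'Alembert form $f(t+x)+g(t-x)$ and the reflection relation $g(y)=-f\bigl(\tfrac{1+\ell_{2}}{1-\ell_{2}}y\bigr)$, whereas the paper writes out the three-piece formulas (\ref{phx})--(\ref{pht}) explicitly and cancels terms; the content is identical.
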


\begin{proof}
First, Let us denote%
\begin{equation}
C_{n}=c_{n}e^{in\pi \kappa _{\ell }\log \left( \frac{1+\ell _{2}}{1-\ell _{2}%
}\right) },\text{ \ \ }n\in 
%TCIMACRO{\U{2124} }%
%BeginExpansion
\mathbb{Z}
%EndExpansion
^{\ast }  \label{cC}
\end{equation}%
or alternatively $C_{n}e^{in\pi \kappa _{\ell }\log \left( \frac{1+\ell _{1}%
}{1-\ell _{1}}\right) }=c_{n}$ since $e^{in\pi \kappa _{\ell }\log \alpha
_{\ell }\beta _{\ell }}=1.$ Due to (\ref{solreg}) and Remark \ref{rmk2}, we
can derive term by term the series (\ref{exact0}), it comes that, for $t\geq
t_{0},$%
\begin{equation}
\phi _{x}(x,t) =i\pi \kappa _{\ell }\sum_{n\in 
%TCIMACRO{\U{2124} }%
%BeginExpansion
\mathbb{Z}
%EndExpansion
^{\ast }}n\left( c_{n}\frac{e^{in\pi \kappa _{\ell }\log \left( t+x\right) }%
}{t+x}+C_{n}\frac{e^{in\pi \kappa _{\ell }\log \left( t-x\right) }}{t-x}%
\right) ,\text{ \ \ }x\in \left( -\ell _{1}t,\ell _{2}t\right) \smallskip   \label{ph x} 
\end{equation}
\begin{equation}
\phi _{t}(x,t) =i\pi \kappa _{\ell }\sum_{n\in 
%TCIMACRO{\U{2124} }%
%BeginExpansion
\mathbb{Z}
%EndExpansion
^{\ast }}n\left( c_{n}\frac{e^{in\pi \kappa _{\ell }\log \left( t+x\right) }%
}{t+x}-C_{n}\frac{e^{in\pi \kappa _{\ell }\log \left( t-x\right) }}{t-x}%
\right) ,\text{ \ \ }x\in \left( -\ell _{1}t,\ell _{2}t\right) .
\label{ph t}
\end{equation}%
Combining this, with (\ref{phi1+}) and (\ref{phi0x+}), for $t=t_{0},$ the
extensions of the initial data are given by%
\begin{equation}
\tilde{\phi}_{x}^{0}\left( x\right) =\left\{ 
\begin{array}{ll}
i\pi \kappa _{\ell }%
%TCIMACRO{\TeXButton{display}{\displaystyle}}%
%BeginExpansion
\displaystyle%
%EndExpansion
\sum_{n\in 
%TCIMACRO{\U{2124} }%
%BeginExpansion
\mathbb{Z}
%EndExpansion
^{\ast }}n\left( c_{n}\dfrac{e^{in\pi \kappa _{\ell }\log \left( \frac{%
1-\ell _{1}}{1+\ell _{1}}\left( t_{0}-x\right) \right) }}{t_{0}-x}\right.  & 
\\ 
\multicolumn{1}{r}{\left. 
%TCIMACRO{\TeXButton{display}{\displaystyle}}%
%BeginExpansion
\displaystyle%
%EndExpansion
+\frac{1-\ell _{1}}{1+\ell _{1}}C_{n}\dfrac{e^{in\pi \kappa _{\ell }\log
\left( 2t_{0}-\frac{1-\ell _{1}}{1+\ell _{1}}\left( t_{0}-x\right) \right) }%
}{2t_{0}-\frac{1-\ell _{1}}{1+\ell _{1}}\left( t_{0}-x\right) }\right) ,} & 
x\in \left( -L_{1}t_{0},-\ell _{1}t_{0}\right) ,\smallskip  \\ 
i\pi \kappa _{\ell }%
%TCIMACRO{\TeXButton{display}{\displaystyle}}%
%BeginExpansion
\displaystyle%
%EndExpansion
\sum_{n\in 
%TCIMACRO{\U{2124} }%
%BeginExpansion
\mathbb{Z}
%EndExpansion
^{\ast }}n\left( c_{n}\dfrac{e^{in\pi \kappa _{\ell }\log \left(
t_{0}+x\right) }}{t_{0}+x}+C_{n}\dfrac{e^{in\pi \kappa _{\ell }\log \left(
t_{0}-x\right) }}{t_{0}-x}\right) ,\text{\ } & x\in \left( -\ell
_{1}t_{0},\ell _{2}t_{0}\right) ,\smallskip  \\ 
i\pi \kappa _{\ell }%
%TCIMACRO{\TeXButton{display}{\displaystyle}}%
%BeginExpansion
\displaystyle%
%EndExpansion
\sum_{n\in 
%TCIMACRO{\U{2124} }%
%BeginExpansion
\mathbb{Z}
%EndExpansion
^{\ast }}n\left( \frac{1-\ell _{2}}{1+\ell _{2}}c_{n}\dfrac{e^{in\pi \kappa
_{\ell }\log \left( 2t_{0}-\frac{1-\ell _{2}}{1+\ell _{2}}\left(
t_{0}+x\right) \right) }}{2t_{0}-\frac{1-\ell _{2}}{1+\ell _{2}}\left(
t_{0}+x\right) }\right.  &  \\ 
\multicolumn{1}{r}{%
%TCIMACRO{\TeXButton{display}{\displaystyle}}%
%BeginExpansion
\displaystyle%
%EndExpansion
\left. +C_{n}\dfrac{e^{in\pi \kappa _{\ell }\log \left( \frac{1-\ell _{2}}{%
1+\ell _{2}}\left( t_{0}+x\right) \right) }}{t_{0}+x}\right) ,} & x\in
\left( \ell _{2}t_{0},L_{2}t_{0}\right) .%
\end{array}%
\right.   \label{phx}
\end{equation}%
\begin{equation}
\tilde{\phi}^{1}(x)=\left\{ 
\begin{array}{ll}
i\pi \kappa _{\ell }%
%TCIMACRO{\TeXButton{display}{\displaystyle}}%
%BeginExpansion
\displaystyle%
%EndExpansion
\sum_{n\in 
%TCIMACRO{\U{2124} }%
%BeginExpansion
\mathbb{Z}
%EndExpansion
^{\ast }}n\left( -c_{n}\dfrac{e^{in\pi \kappa _{\ell }\log \left( \frac{%
1-\ell _{1}}{1+\ell _{1}}\left( t_{0}-x\right) \right) }}{t_{0}-x}\right.  & 
\\ 
\multicolumn{1}{r}{\left. 
%TCIMACRO{\TeXButton{display}{\displaystyle}}%
%BeginExpansion
\displaystyle%
%EndExpansion
+\frac{1-\ell _{1}}{1+\ell _{1}}C_{n}\dfrac{e^{in\pi \kappa _{\ell }\log
\left( 2t_{0}-\frac{1-\ell _{1}}{1+\ell _{1}}\left( t_{0}-x\right) \right) }%
}{2t_{0}-\frac{1-\ell _{1}}{1+\ell _{1}}\left( t_{0}-x\right) }\right) ,} & 
x\in \left( -L_{1}t_{0},-\ell _{1}t_{0}\right) ,\smallskip  \\ 
i\pi \kappa _{\ell }%
%TCIMACRO{\TeXButton{display}{\displaystyle}}%
%BeginExpansion
\displaystyle%
%EndExpansion
\sum_{n\in 
%TCIMACRO{\U{2124} }%
%BeginExpansion
\mathbb{Z}
%EndExpansion
^{\ast }}n\left( c_{n}\dfrac{e^{in\pi \kappa _{\ell }\log \left(
t_{0}+x\right) }}{t_{0}+x}-C_{n}\dfrac{e^{in\pi \kappa _{\ell }\log \left(
t_{0}-x\right) }}{t_{0}-x}\right) ,\text{ } & x\in \left( -\ell
_{1}t_{0},\ell _{2}t_{0}\right) ,\smallskip  \\ 
i\pi \kappa _{\ell }%
%TCIMACRO{\TeXButton{display}{\displaystyle}}%
%BeginExpansion
\displaystyle%
%EndExpansion
%TCIMACRO{\TeXButton{display}{\displaystyle}}%
%BeginExpansion
\displaystyle%
%EndExpansion
\sum_{n\in 
%TCIMACRO{\U{2124} }%
%BeginExpansion
\mathbb{Z}
%EndExpansion
^{\ast }}n\left( -\frac{1-\ell _{2}}{1+\ell _{2}}c_{n}\dfrac{e^{in\pi \kappa
_{\ell }\log \left( 2t_{0}-\frac{1-\ell _{2}}{1+\ell _{2}}\left(
t_{0}+x\right) \right) }}{2t_{0}-\frac{1-\ell _{2}}{1+\ell _{2}}\left(
t_{0}+x\right) }\right.  &  \\ 
\multicolumn{1}{r}{\left. +C_{n}\dfrac{e^{in\pi \kappa _{\ell }\log \left( 
\frac{1-\ell _{2}}{1+\ell _{2}}\left( t_{0}+x\right) \right) }}{t_{0}+x}%
\right) ,} & x\in \left( \ell _{2}t_{0},L_{2}t_{0}\right) .%
\end{array}%
\right.   \label{pht}
\end{equation}

On one hand, taking the sum of (\ref{phx}) and (\ref{pht}), we get%
\begin{equation*}
\tilde{\phi}_{x}^{0}+\tilde{\phi}^{1}=\left\{ 
\begin{array}{ll}
2\pi i\kappa _{\ell }\frac{1-\ell _{1}}{1+\ell _{1}}%
%TCIMACRO{\TeXButton{display}{\displaystyle}}%
%BeginExpansion
\displaystyle%
%EndExpansion
\sum_{n\in 
%TCIMACRO{\U{2124} }%
%BeginExpansion
\mathbb{Z}
%EndExpansion
^{\ast }}nC_{n}\dfrac{e^{in\pi \kappa _{\ell }\log \left( 2t_{0}-\frac{%
1-\ell _{1}}{1+\ell _{1}}\left( t_{0}-x\right) \right) }}{2t_{0}-\frac{%
1-\ell _{1}}{1+\ell _{1}}\left( t_{0}-x\right) }, & x\in \left(
-L_{1}t_{0},-\ell _{1}t_{0}\right) ,\smallskip \\ 
2\pi i\kappa _{\ell }%
%TCIMACRO{\TeXButton{display}{\displaystyle}}%
%BeginExpansion
\displaystyle%
%EndExpansion
\sum_{n\in 
%TCIMACRO{\U{2124} }%
%BeginExpansion
\mathbb{Z}
%EndExpansion
^{\ast }}nc_{n}\dfrac{e^{in\pi \kappa _{\ell }\log \left( t_{0}+x\right) }}{%
t_{0}+x}, & x\in \left( -\ell _{1}t_{0},\ell _{2}t_{0}\right) ,\smallskip \\ 
2\pi i\kappa _{\ell }\frac{1+\ell _{2}}{1-\ell _{2}}%
%TCIMACRO{\TeXButton{display}{\displaystyle}}%
%BeginExpansion
\displaystyle%
%EndExpansion
\sum_{n\in 
%TCIMACRO{\U{2124} }%
%BeginExpansion
\mathbb{Z}
%EndExpansion
^{\ast }}nC_{n}\dfrac{e^{in\pi \kappa _{\ell }\log \left( \frac{1-\ell _{2}}{%
1+\ell _{2}}\left( t_{0}+x\right) \right) }}{t_{0}+x}, & x\in \left( \ell
_{2}t_{0},L_{2}t_{0}\right) .%
\end{array}%
\right.
\end{equation*}%
In particular,%
\begin{equation*}
\left( t_{0}+x\right) \left( \tilde{\phi}_{x}^{0}+\tilde{\phi}^{1}\right)
=\left\{ 
\begin{array}{ll}
2\pi i\kappa _{\ell }%
%TCIMACRO{\TeXButton{display}{\displaystyle}}%
%BeginExpansion
\displaystyle%
%EndExpansion
\sum_{n\in 
%TCIMACRO{\U{2124} }%
%BeginExpansion
\mathbb{Z}
%EndExpansion
^{\ast }}nc_{n}e^{in\pi \kappa _{\ell }\log \left( t_{0}+x\right) }, & x\in
\left( -\ell _{1}t_{0},\ell _{2}t_{0}\right) ,\smallskip \\ 
2\pi i\kappa _{\ell }%
%TCIMACRO{\TeXButton{display}{\displaystyle}}%
%BeginExpansion
\displaystyle%
%EndExpansion
\sum_{n\in 
%TCIMACRO{\U{2124} }%
%BeginExpansion
\mathbb{Z}
%EndExpansion
^{\ast }}\left( nC_{n}e^{in\pi \kappa _{\ell }\log \left( \frac{1+\ell _{1}}{%
1-\ell _{1}}\right) }\right) e^{in\pi \kappa _{\ell }\log \left(
t_{0}+x\right) }, & x\in \left( \ell _{2}t_{0},L_{2}t_{0}\right) .%
\end{array}%
\right.
\end{equation*}%
Due to (\ref{cC}), we can write 
\begin{equation}
\frac{1}{2\pi i\sqrt{2\kappa _{\ell }}}\left( t_{0}+x\right) \left( \tilde{%
\phi}_{x}^{0}+\tilde{\phi}^{1}\right) =\sum_{n\in 
%TCIMACRO{\U{2124} }%
%BeginExpansion
\mathbb{Z}
%EndExpansion
^{\ast }}nc_{n}\sqrt{\kappa _{\ell }/2}e^{in\pi \kappa _{\ell }\log \left(
t_{0}+x\right) },\text{ \ \ for }x\in \left( -\ell
_{1}t_{0},L_{2}t_{0}\right) .  \label{30}
\end{equation}%
Thanks to Lemma \ref{lm2}, we see that $nc_{n}$ is the $n^{th}$ coefficient
of the function%
\begin{equation}
\frac{1}{2\pi i\sqrt{2\kappa _{\ell }}}\left( t_{0}+x\right) \left( \tilde{%
\phi}_{x}^{0}+\tilde{\phi}^{1}\right) \in L^{2}\left( -\ell
_{1}t_{0},L_{2}t_{0},\frac{dx}{t_{0}+x}\right)  \label{parsev+}
\end{equation}%
in the basis $\left\{ \sqrt{\kappa _{\ell }/2}e^{in\pi \kappa _{\ell }\log
\left( t_{0}+x\right) }\right\} _{n\in 
%TCIMACRO{\U{2124} }%
%BeginExpansion
\mathbb{Z}
%EndExpansion
}$, i.e.%
\begin{equation*}
nc_{n}=\frac{1}{2\pi i\sqrt{2\kappa _{\ell }}}\int\limits_{-\ell
_{1}t_{0}}^{L_{2}t_{0}}\left( t_{0}+x\right) \left( \tilde{\phi}_{x}^{0}+%
\tilde{\phi}^{1}\right) e^{-in\pi \kappa _{\ell }\log \left( t_{0}+x\right) }%
\frac{dx}{t_{0}+x}\text{, \ \ \ for\ }n\in 
%TCIMACRO{\U{2124} }%
%BeginExpansion
\mathbb{Z}
%EndExpansion
^{\ast }
\end{equation*}%
and (\ref{cn+}) holds as claimed.

On the other hand, taking the difference of (\ref{phx}) and (\ref{pht}), we
get in particular%
\begin{equation*}
\tilde{\phi}_{x}^{0}-\tilde{\phi}^{1}=\left\{ 
\begin{array}{ll}
2i\pi \kappa _{\ell }%
%TCIMACRO{\TeXButton{display}{\displaystyle}}%
%BeginExpansion
\displaystyle%
%EndExpansion
\sum_{n\in 
%TCIMACRO{\U{2124} }%
%BeginExpansion
\mathbb{Z}
%EndExpansion
^{\ast }}nc_{n}\frac{e^{in\pi \kappa _{\ell }\log \left( \frac{1-\ell _{1}}{%
1+\ell _{1}}\left( t_{0}-x\right) \right) }}{\left( t_{0}-x\right) }, & x\in
\left( -L_{1}t_{0},-\ell _{1}t_{0}\right) ,\smallskip  \\ 
2i\pi \kappa _{\ell }%
%TCIMACRO{\TeXButton{display}{\displaystyle}}%
%BeginExpansion
\displaystyle%
%EndExpansion
\sum_{n\in 
%TCIMACRO{\U{2124} }%
%BeginExpansion
\mathbb{Z}
%EndExpansion
^{\ast }}nC_{n}\frac{e^{in\pi \kappa _{\ell }\log \left( t_{0}-x\right) }}{%
\left( t_{0}-x\right) }, & x\in \left( -\ell _{1}t_{0},\ell _{2}t_{0}\right)
,\smallskip 
\end{array}%
\right. 
\end{equation*}%
hence%
\begin{equation*}
\left( t_{0}-x\right) \left( \tilde{\phi}_{x}^{0}-\tilde{\phi}^{1}\right)
=\left\{ 
\begin{array}{ll}
2i\pi \kappa _{\ell }%
%TCIMACRO{\TeXButton{display}{\displaystyle}}%
%BeginExpansion
\displaystyle%
%EndExpansion
\sum_{n\in 
%TCIMACRO{\U{2124} }%
%BeginExpansion
\mathbb{Z}
%EndExpansion
^{\ast }}n\left( c_{n}e^{in\pi \kappa _{\ell }\log \left( \frac{1-\ell _{1}}{%
1+\ell _{1}}\right) }\right) e^{in\pi \kappa _{\ell }\log \left(
t_{0}-x\right) }, & x\in \left( -L_{1}t_{0},-\ell _{1}t_{0}\right)
,\smallskip  \\ 
i\pi \kappa _{\ell }%
%TCIMACRO{\TeXButton{display}{\displaystyle}}%
%BeginExpansion
\displaystyle%
%EndExpansion
\sum_{n\in 
%TCIMACRO{\U{2124} }%
%BeginExpansion
\mathbb{Z}
%EndExpansion
^{\ast }}nC_{n}e^{in\pi \kappa _{\ell }\log \left( \left( t_{0}-x\right)
\right) }, & x\in \left( -\ell _{1}t_{0},\ell _{2}t_{0}\right) ,\smallskip 
\end{array}%
\right. 
\end{equation*}%
Using (\ref{cC}), we can write%
\begin{equation}
\frac{1}{2\pi i\sqrt{2\kappa _{\ell }}}\left( t_{0}-x\right) \left( \tilde{%
\phi}_{x}^{0}-\tilde{\phi}^{1}\right) =\sum_{n\in 
%TCIMACRO{\U{2124} }%
%BeginExpansion
\mathbb{Z}
%EndExpansion
^{\ast }}nC_{n}\sqrt{\kappa _{\ell }/2}e^{in\pi \kappa _{\ell }\log \left(
t_{0}-x\right) },\text{\ \ \ for }x\in \left( -L_{1}t_{0},\ell
_{2}t_{0}\right) .  \label{31}
\end{equation}%
This means that $nC_{n}$ is the $n^{th}$ coefficient of the function%
\begin{equation}
\frac{1}{2\pi i\sqrt{2\kappa _{\ell }}}\left( t_{0}-x\right) \left( \tilde{%
\phi}_{x}^{0}-\tilde{\phi}^{1}\right) \in L^{2}\left( -L_{1}t_{0},\ell
_{2}t_{0},\frac{dx}{t_{0}-x}\right)   \label{parsev-}
\end{equation}%
in the basis $\left\{ \sqrt{\kappa _{\ell }/2}e^{in\pi \kappa _{\ell }\log
\left( t_{0}-x\right) }\right\} _{n\in 
%TCIMACRO{\U{2124} }%
%BeginExpansion
\mathbb{Z}
%EndExpansion
}$, i.e. (\ref{cn-}) also holds.

The sums $\sum_{n\in 
%TCIMACRO{\U{2124} }%
%BeginExpansion
\mathbb{Z}
%EndExpansion
^{\ast }}\left\vert nc_{n}\right\vert ^{2}$ and $\sum_{n\in 
%TCIMACRO{\U{2124} }%
%BeginExpansion
\mathbb{Z}
%EndExpansion
^{\ast }}\left\vert nC_{n}\right\vert ^{2}$ result from Parseval's equality
applied to the functions given in (\ref{parsev+}) and (\ref{parsev-})
respectively, hence (\ref{ncn+}) and (\ref{ncn-}) follows since $\left\vert
c_{n}\right\vert =\left\vert C_{n}\right\vert $.
\end{proof}

\section{Energy estimates}

The following theorem gives the precise decay rate of the energy for the
solution of (\ref{wave}).

\begin{theorem}
\label{th1}Under the assumptions \emph{(\ref{tlike})} and \emph{(\ref{ic}),}
the solution of Problem \emph{(\ref{wave})} satisfies%
\begin{equation}
tE\left( t\right) +\int\limits_{-\ell _{1}t}^{\ell _{2}t}x\phi _{x}\phi
_{t}\ dx=S_{\ell },\text{ \ \ for \ }t\geq t_{0},  \label{est0}
\end{equation}%
where $S_{\ell }:=2\pi ^{2}\kappa _{\ell }\sum_{n\in 
%TCIMACRO{\U{2124} }%
%BeginExpansion
\mathbb{Z}
%EndExpansion
^{\ast }}\left\vert nc_{n}\right\vert ^{2},$ and it holds that%
\begin{equation}
\frac{S_{\ell }}{t\left( 1+L\right) }\leq E\left( t\right) \leq \frac{%
S_{\ell }}{t\left( 1-L\right) },\ \ \ \ \ \text{\ for }t\geq t_{0}.
\label{ES}
\end{equation}
\end{theorem}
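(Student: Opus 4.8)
The plan is to substitute the series for $\phi_{x}$ and $\phi_{t}$ into $E(t)$ and into the cross term, pass to the characteristic combinations $\phi_{x}\pm\phi_{t}$, and then recognise the two resulting weighted integrals as complementary pieces of a single Parseval sum on an interval of length $2$. Concretely, write
\[
P(x,t):=\sum_{n\in\mathbb{Z}^{\ast}}nc_{n}\frac{e^{in\pi\kappa_{\ell}\log(t+x)}}{t+x},\qquad
Q(x,t):=\sum_{n\in\mathbb{Z}^{\ast}}nC_{n}\frac{e^{in\pi\kappa_{\ell}\log(t-x)}}{t-x},
\]
with $C_{n}$ as in \eqref{cC}, so that \eqref{ph x} and \eqref{ph t} read $\phi_{x}+\phi_{t}=2i\pi\kappa_{\ell}P$ and $\phi_{x}-\phi_{t}=2i\pi\kappa_{\ell}Q$. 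Since $\phi$ is real, $\phi_{x}\pm\phi_{t}$ are real, whence $(\phi_{x}+\phi_{t})^{2}=|\phi_{x}+\phi_{t}|^{2}=4\pi^{2}\kappa_{\ell}^{2}|P|^{2}$ and $(\phi_{x}-\phi_{t})^{2}=4\pi^{2}\kappa_{\ell}^{2}|Q|^{2}$; the polarization identities then give $\phi_{x}^{2}+\phi_{t}^{2}=2\pi^{2}\kappa_{\ell}^{2}(|P|^{2}+|Q|^{2})$ and $\phi_{x}\phi_{t}=\pi^{2}\kappa_{\ell}^{2}(|P|^{2}-|Q|^{2})$. Collecting the weights $t\pm x$, the cross term merges with $tE(t)$ into
\[
tE(t)+\int_{-\ell_{1}t}^{\ell_{2}t}x\phi_{x}\phi_{t}\,dx
=\pi^{2}\kappa_{\ell}^{2}\int_{-\ell_{1}t}^{\ell_{2}t}\Big[(t+x)|P|^{2}+(t-x)|Q|^{2}\Big]\,dx=\pi^{2}\kappa_{\ell}^{2}\,(I_{1}+I_{2}),
\]
where, using $(t+x)|P|^{2}=\frac{1}{t+x}\big|\sum_{n}nc_{n}e^{in\pi\kappa_{\ell}\log(t+x)}\big|^{2}$ and the analogous identity for $Q$,
\[
I_{1}=\int_{-\ell_{1}t}^{\ell_{2}t}\Big|\sum_{n}nc_{n}e^{in\pi\kappa_{\ell}\log(t+x)}\Big|^{2}\frac{dx}{t+x},\qquad
I_{2}=\int_{-\ell_{1}t}^{\ell_{2}t}\Big|\sum_{n}nC_{n}e^{in\pi\kappa_{\ell}\log(t-x)}\Big|^{2}\frac{dx}{t-x}.
\]

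The heart of the argument is the evaluation of $I_{1}+I_{2}$. Setting $s=\kappa_{\ell}\log\frac{t+x}{(1-\ell_{1})t}$ turns $I_{1}$ into $\frac{1}{\kappa_{\ell}}\int_{0}^{\kappa_{\ell}\log\beta_{\ell}}|\sum_{n}f_{n}e^{in\pi s}|^{2}\,ds$ with $f_{n}=nc_{n}e^{in\pi\kappa_{\ell}\log((1-\ell_{1})t)}$, since on $(-\ell_{1}t,\ell_{2}t)$ the quantity $t+x$ runs over a ratio $\frac{1+\ell_{2}}{1-\ell_{1}}=\beta_{\ell}$; likewise $\sigma=\kappa_{\ell}\log\frac{t-x}{(1-\ell_{2})t}$ turns $I_{2}$ into $\frac{1}{\kappa_{\ell}}\int_{0}^{\kappa_{\ell}\log\alpha_{\ell}}|\sum_{n}g_{n}e^{in\pi\sigma}|^{2}\,d\sigma$ with $g_{n}=nC_{n}e^{in\pi\kappa_{\ell}\log((1-\ell_{2})t)}$, the span being $\frac{1+\ell_{1}}{1-\ell_{2}}=\alpha_{\ell}$. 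The decisive algebraic fact is that \eqref{cC} yields $g_{n}=f_{n}e^{in\pi\kappa_{\ell}\log\beta_{\ell}}$, so under the shift $s=\sigma+\kappa_{\ell}\log\beta_{\ell}$ the integrand of $I_{2}$ becomes $|\sum_{n}f_{n}e^{in\pi s}|^{2}$ on $s\in(\kappa_{\ell}\log\beta_{\ell},2)$, where I use $\kappa_{\ell}\log(\alpha_{\ell}\beta_{\ell})=2$. Thus $I_{1}$ and $I_{2}$ tile $(0,2)$ exactly once, and Parseval for the orthonormal basis $\{2^{-1/2}e^{in\pi s}\}_{n\in\mathbb{Z}}$ of $L^{2}(0,2)$ gives $I_{1}+I_{2}=\frac{1}{\kappa_{\ell}}\int_{0}^{2}|\sum_{n}f_{n}e^{in\pi s}|^{2}\,ds=\frac{2}{\kappa_{\ell}}\sum_{n}|nc_{n}|^{2}$. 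Hence $tE(t)+\int x\phi_{x}\phi_{t}\,dx=2\pi^{2}\kappa_{\ell}\sum_{n}|nc_{n}|^{2}=S_{\ell}$, which is \eqref{est0}.

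I expect this stitching to be the main obstacle: one must track the two changes of variable together with the phase relation $C_{n}=c_{n}e^{in\pi\kappa_{\ell}\log\frac{1+\ell_{2}}{1-\ell_{2}}}$ accurately enough to see that the two \emph{physical} sub-intervals cover $(0,2)$ with neither gap nor overlap, which is exactly where the definition of $\kappa_{\ell}$ and the identity $\kappa_{\ell}\log(\alpha_{\ell}\beta_{\ell})=2$ enter. The term-by-term differentiation and the use of Parseval are legitimate because $\sum_{n}|nc_{n}|^{2}<\infty$ (Theorem \ref{thexist1}) and by the norm equivalences of Remark \ref{rmk2} and Lemma \ref{lm02}. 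Finally, \eqref{ES} follows from \eqref{est0} by an elementary estimate: on $(-\ell_{1}t,\ell_{2}t)$ one has $|x|\le Lt$ and $|\phi_{x}\phi_{t}|\le\frac{1}{2}(\phi_{x}^{2}+\phi_{t}^{2})$, so $|\int_{-\ell_{1}t}^{\ell_{2}t}x\phi_{x}\phi_{t}\,dx|\le Lt\,E(t)$; inserting this into \eqref{est0} gives $t(1-L)E(t)\le S_{\ell}\le t(1+L)E(t)$, and dividing by $t(1\mp L)>0$ (recall $0\le L<1$) yields \eqref{ES}.
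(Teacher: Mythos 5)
Your proof is correct, and in its key step it takes a genuinely different route from the paper. Both arguments start from the same reduction: using \eqref{ph x}--\eqref{ph t}, the quantity $tE(t)+\int x\phi_x\phi_t\,dx$ collapses to $\pi^2\kappa_\ell^2\int_{-\ell_1t}^{\ell_2t}\bigl[(t+x)|P|^2+(t-x)|Q|^2\bigr]dx$ with the two characteristic series $P,Q$. Where you diverge is in evaluating this sum. The paper never leaves the extension machinery: it applies Parseval to $(t\pm x)(\tilde\phi_x\pm\tilde\phi_t)$ on the \emph{enlarged} intervals $(-\ell_1t,L_2t)$ and $(-L_1t,\ell_2t)$ (Lemma \ref{lm02}), obtaining $4S_\ell$ for each as in \eqref{E3}--\eqref{E2}, and then uses the reflection formulas \eqref{phi0x+}, \eqref{phi1+} and explicit changes of variable to fold the exterior pieces back onto $(-\ell_1t,\ell_2t)$, where they reappear as the opposite characteristic combination. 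You instead stay entirely on the physical interval and observe that, after the logarithmic substitutions, the $P$-integral occupies $(0,\kappa_\ell\log\beta_\ell)$ and the $Q$-integral, once the phase relation \eqref{cC} is used to identify $g_n=f_ne^{in\pi\kappa_\ell\log\beta_\ell}$, occupies the complementary window $(\kappa_\ell\log\beta_\ell,2)$ with the \emph{same} Fourier coefficients $f_n$; a single application of Parseval on the full period $(0,2)$ then yields $I_1+I_2=\frac{2}{\kappa_\ell}\sum|nc_n|^2$. This tiling argument is the exact dual of the paper's reflection argument (the extension \eqref{phi0+} is engineered precisely so that the reflected pieces carry the same coefficients), but your version is more self-contained: it needs neither the extended functions nor the completeness statements on the enlarged intervals, only the identity $\kappa_\ell\log(\alpha_\ell\beta_\ell)=2$ and the square-summability of $\{nc_n\}$ to justify Parseval. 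The price is that the two changes of variable and the phase bookkeeping must be tracked with care — which you do correctly, including the orientation reversal in the $\sigma$-substitution. The passage from \eqref{est0} to \eqref{ES} is the same elementary estimate as in the paper.
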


\begin{proof}
On one hand, due to Lemma \ref{lm02}, (\ref{30}) and Parseval's equality
applied to 
\begin{equation*}
(t-x)\left( \tilde{\phi}_{x}-\tilde{\phi}_{t}\right) \in L^{2}\left(
-L_{1}t,\ell _{2}t,\frac{dx}{t-x}\right) ,
\end{equation*}%
we infer that%
\begin{equation}
\int\limits_{-L_{1}t}^{\ell _{2}t}(t-x)\left( \tilde{\phi}_{x}-\tilde{\phi}%
_{t}\right) ^{2}dx=\int\limits_{-L_{1}t}^{\ell _{2}t}\left\vert (t-x)\left( 
\tilde{\phi}_{x}-\tilde{\phi}_{t}\right) \right\vert ^{2}\frac{dx}{t-x}=8\pi
^{2}\kappa \sum_{n\in 
%TCIMACRO{\U{2124} }%
%BeginExpansion
\mathbb{Z}
%EndExpansion
^{\ast }}\left\vert nC_{n}\right\vert ^{2}=4S_{\ell }.  \label{E3}
\end{equation}%
Similarly, due to (\ref{31}) and Parseval's equality, applied to $%
(t+x)\left( \tilde{\phi}_{x}+\tilde{\phi}_{t}\right) \in L^{2}\left( -\ell
_{1}t,L_{2}t,\frac{dx}{t+x}\right) $, we obtain%
\begin{equation}
\int\limits_{-\ell _{1}t}^{L_{2}t}(t+x)\left( \tilde{\phi}_{x}+\tilde{\phi}%
_{t}\right) ^{2}dx=\int\limits_{-\ell _{1}t}^{L_{2}t}\left\vert (t+x)\left( 
\tilde{\phi}_{x}+\tilde{\phi}\right) \right\vert ^{2}\frac{dx}{t+x}=8\pi
^{2}\kappa \sum_{n\in 
%TCIMACRO{\U{2124} }%
%BeginExpansion
\mathbb{Z}
%EndExpansion
^{\ast }}\left\vert nc_{n}\right\vert ^{2}=4S_{\ell }.  \label{E2}
\end{equation}%
Using (\ref{phi0x+}), (\ref{phi1+}) and considering the change of variable $%
x=t-\frac{1-\ell _{1}}{1+\ell _{1}}\left( t+\xi \right) $, in $\left(
-L_{1}t,-\ell _{1}t\right) ,$ we obtain%
\begin{eqnarray*}
\int\limits_{-L_{1}t}^{-\ell _{1}t}(t-x)\left( \tilde{\phi}_{x}\left(
x\right) -\tilde{\phi}_{t}\left( x\right) \right) ^{2}dx &=&-\left( \frac{%
1-\ell _{1}}{1+\ell _{1}}\right) ^{2}\int\limits_{\ell _{2}t}^{-\ell
_{1}t}\left( t+\xi \right) \left( \left( \frac{1+\ell _{1}}{1-\ell _{1}}%
\right) \left( \phi _{x}\left( \xi \right) +\phi _{t}\left( \xi \right)
\right) \right) ^{2}d\xi \\
&=&\int\limits_{-\ell _{1}t}^{\ell _{2}t}\left( t+\xi \right) \left( \left(
\phi _{x}\left( \xi \right) +\phi _{t}\left( \xi \right) \right) \right)
^{2}d\xi .
\end{eqnarray*}%
Taking $x=t-\frac{1-\ell _{2}}{1+\ell _{2}}\left( t+\xi \right) $ in $\left(
\ell _{2}t,L_{2}t\right) ,$ we obtain%
\begin{equation*}
\int\limits_{\ell _{2}t}^{L_{2}t}(t+x)\left( \tilde{\phi}_{x}\left( x\right)
+\tilde{\phi}_{t}\left( x\right) \right) ^{2}dx=\int\limits_{-\ell
_{1}t}^{\ell _{2}t}\left( t-\xi \right) \left( \phi _{x}\left( \xi \right)
-\phi _{t}\left( \xi \right) \right) ^{2}d\xi .
\end{equation*}%
Thus, considering the sum of (\ref{E3}) and (\ref{E2}), we infer that%
\begin{multline*}
\int\limits_{-L_{1}t}^{\ell _{2}t}(t+x)\left( \tilde{\phi}_{t}+\tilde{\phi}%
_{x}\right) ^{2}dx+\int\limits_{-\ell _{1}t}^{L_{2}t}(t-x)\left( \tilde{\phi}%
_{x}-\tilde{\phi}_{t}\right) ^{2}dx \\
=2\int\limits_{-\ell _{1}t}^{\ell _{2}t}\left( t+x\right) \left( \phi
_{x}+\phi _{t}\right) ^{2}dx+2\int\limits_{-\ell _{1}t}^{\ell
_{2}t}(t-x)\left( \phi _{t}-\phi _{x}\right) ^{2}dx=8S_{\ell }.
\end{multline*}%
Expanding $\left( \phi _{x}\pm \phi _{t}\right) ^{2}$ and collecting similar
terms, we get 
\begin{equation*}
4t\int\limits_{-\ell _{1}t}^{\ell _{2}t}\phi _{x}^{2}+\phi _{t}^{2}\
dx+8\int\limits_{-\ell _{1}t}^{\ell _{2}t}x\phi _{x}\phi _{t}\ dx=8S_{\ell },%
\text{ \ \ for}\ t\geq t_{0}.
\end{equation*}%
Recalling that $E\left( t\right) $ is given by (\ref{E}), then (\ref{est0})
holds as claimed.

Finally, we use the algebraic inequality $\pm ab\leq \left(
a^{2}+b^{2}\right) /2$ and $\left\vert x\right\vert \leq L$ to obtain%
\begin{equation*}
\pm \int\limits_{-\ell _{1}t}^{\ell _{2}t}x\phi _{x}\phi _{t}\ dx\leq Lt%
\text{ }E\left( t\right) \text{, \ \ \ for }t\geq t_{0},
\end{equation*}%
Due to (\ref{est0}), it comes that%
\begin{equation}
S_{\ell }\leq \left( 1+L\right) t\text{ }E\left( t\right) \text{ \ \ and \ \ 
}\left( 1-L\right) t\text{ }E\left( t\right) \leq S_{\ell },\text{ \ for }%
t\geq t_{0},  \label{ESES}
\end{equation}%
This implies (\ref{ES}) and the theorem follows.
\end{proof}

The next corollary compares $E\left( t\right) $\ to the initial energy $%
E\left( t_{0}\right) .$

\begin{corollary}
Under the assumptions \emph{(\ref{tlike}) }and \emph{(\ref{ic}),} the energy
of the solution of Problem \emph{(\ref{wave})} satisfies%
\begin{equation}
\left( \frac{1-L}{1+L}\right) \frac{t_{0}E\left( t_{0}\right) }{t}\leq
E\left( t\right) \leq \left( \frac{1+L}{1-L}\right) \frac{t_{0}E\left(
t_{0}\right) }{t},\ \ \ \ \ \text{\ for }t\geq t_{0}.  \label{stab}
\end{equation}
\end{corollary}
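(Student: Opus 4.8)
The plan is to use the two–sided bound \eqref{ES} twice, exploiting the fact that $S_{\ell}$ is a fixed constant independent of $t$. Evaluating \eqref{ES} (equivalently \eqref{ESES}) at the initial time $t=t_{0}$ traps $S_{\ell}$ between two multiples of the initial energy $E(t_{0})$; feeding these back into \eqref{ES} at a generic time $t$ then eliminates $S_{\ell}$ altogether and yields the asserted comparison.

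Concretely, I would first specialize \eqref{ESES} to $t=t_{0}$, reading off
$$
\left(1-L\right)t_{0}\,E\left(t_{0}\right)\leq S_{\ell}\leq\left(1+L\right)t_{0}\,E\left(t_{0}\right).
$$
For the upper estimate on $E(t)$, I would start from the right half of \eqref{ES}, namely $E(t)\leq S_{\ell}/\left(t\left(1-L\right)\right)$, and substitute $S_{\ell}\leq\left(1+L\right)t_{0}E(t_{0})$ to obtain the claimed right-hand side $\left(\tfrac{1+L}{1-L}\right)t_{0}E(t_{0})/t$. Symmetrically, for the lower estimate I would begin from $E(t)\geq S_{\ell}/\left(t\left(1+L\right)\right)$ and use $S_{\ell}\geq\left(1-L\right)t_{0}E(t_{0})$ to recover $\left(\tfrac{1-L}{1+L}\right)t_{0}E(t_{0})/t$.

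I do not anticipate any genuine obstacle here: the entire argument reduces to chaining the single constant $S_{\ell}$ through the already-established inequality \eqref{ES}. The only point requiring care is that all denominators be positive, i.e. $1-L>0$; this is guaranteed by assumption \eqref{tlike}, since $L=\max\{\ell_{1},\ell_{2}\}<1$, so every inequality above preserves its direction and the substitutions are legitimate.
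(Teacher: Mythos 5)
Your argument is correct and is essentially the paper's own proof: both chain the constant $S_{\ell}$ through the two-sided bound \eqref{ESES} evaluated at $t$ and at $t_{0}$, eliminating $S_{\ell}$ to get \eqref{stab}. The remark on $1-L>0$ via \eqref{tlike} is a sensible sanity check; nothing further is needed.
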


\begin{proof}
Since (\ref{ESES}) holds also for $t=t_{0}$, then \emph{(\ref{stab})}
follows by combining the two inequalities%
\begin{equation*}
\left( 1-L\right) t\text{ }E\left( t\right) \leq \left( 1+L\right) t_{0}%
\text{ }E\left( t_{0}\right) \ \ \text{and}\ \ \left( 1+L\right) t\text{ }%
E\left( t\right) \geq \left( 1-L\right) t_{0}\text{ }E\left( t_{0}\right) ,
\end{equation*}%
for\ $t\geq t_{0}.$
\end{proof}

\begin{remark}
Since $E\left( t\right) $ define a norm on the space $H_{0}^{1}\left(
I_{t}\right) \times L^{2}\left( I_{t}\right) ,$ for $t\geq 0,$ then (\ref%
{stab})\emph{\ }implies the uniqueness for the solution of\emph{\ (\ref{wave}%
).}
\end{remark}

\section{Observability and controllability at one endpoint}

In this section, we show the observability of (\ref{wave}) at one of the
endpoints, say $x=\ell _{2}t$, then by applying HUM we deduce the exact
boundary controllability for (\ref{wavec}).

\subsection{Observability at one endpoint}

First, we can state the following lemma.

\begin{lemma}
\label{lm2}Let $M$ be a positive integer and $\phi $ the solution of \emph{(%
\ref{wave}). }Under the assumptions \emph{(\ref{tlike}) }and \emph{(\ref{ic}%
),} we have 
\begin{equation}
\int_{t_{0}}^{\left( \alpha _{\ell }\beta _{\ell }\right) ^{M}t_{0}}t\phi
_{x}^{2}(\ell _{2}t,t)dt=\frac{4M}{\left( 1-\ell _{2}^{2}\right) ^{2}}%
S_{\ell },  \label{=slm}
\end{equation}%
and it holds that%
\begin{equation}
\frac{4M\left( 1-L\right) t_{0}}{\left( 1-\ell _{2}^{2}\right) ^{2}}E\left(
t_{0}\right) \leq \int_{t_{0}}^{\left( \alpha _{\ell }\beta _{\ell }\right)
^{M}t_{0}}t\phi _{x}^{2}(\ell _{2}t,t)dt\leq \frac{4M\left( 1+L\right) t_{0}%
}{\left( 1-\ell _{2}^{2}\right) ^{2}}E\left( t_{0}\right) .  \label{EBlt}
\end{equation}
\end{lemma}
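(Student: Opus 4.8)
The plan is to evaluate the boundary trace $\phi_x(\ell_2 t,t)$ explicitly from the term-by-term differentiated series, recognise it as a Fourier-type series in the variable $\log t$, and then use orthogonality with respect to the weight $dt/t$ (supplied by Remark \ref{rmk00}) to convert the time integral into a sum over $n$ that is controlled by $S_\ell$.

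First I would set $x=\ell_2 t$ in the series representation of $\phi_x(x,t)$. Since $t+\ell_2 t=(1+\ell_2)t$ and $t-\ell_2 t=(1-\ell_2)t$, and since the relation (\ref{cC}) reads $C_n=c_n e^{in\pi\kappa_\ell\log\frac{1+\ell_2}{1-\ell_2}}$, the two exponentials in the $n$-th term line up: after splitting $\log((1\pm\ell_2)t)=\log(1\pm\ell_2)+\log t$, the factor $C_n e^{in\pi\kappa_\ell\log(1-\ell_2)}$ collapses to $c_n e^{in\pi\kappa_\ell\log(1+\ell_2)}$, so both terms carry the common factor $c_n e^{in\pi\kappa_\ell\log(1+\ell_2)}e^{in\pi\kappa_\ell\log t}$. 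The two denominators add to $\frac{1}{(1+\ell_2)t}+\frac{1}{(1-\ell_2)t}=\frac{2}{(1-\ell_2^2)t}$, whence
$$\phi_x(\ell_2 t,t)=\frac{2i\pi\kappa_\ell}{(1-\ell_2^2)t}\sum_{n\in\mathbb{Z}^*} nc_n\, e^{in\pi\kappa_\ell\log(1+\ell_2)}\, e^{in\pi\kappa_\ell\log t}.$$
Writing $d_n:=nc_n e^{in\pi\kappa_\ell\log(1+\ell_2)}$, so that $|d_n|=|nc_n|$, multiplying by $t$ and taking the squared modulus (the solution being real-valued, $\phi_x^2=|\phi_x|^2$) yields
$$t\,\phi_x^2(\ell_2 t,t)=\frac{(2\pi\kappa_\ell)^2}{(1-\ell_2^2)^2}\,\frac{1}{t}\left|\sum_{n\in\mathbb{Z}^*} d_n\, e^{in\pi\kappa_\ell\log t}\right|^2.$$

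Next I would integrate over $(t_0,(\alpha_\ell\beta_\ell)^M t_0)$, where the remaining factor $1/t$ provides exactly the weight $dt/t$. By Remark \ref{rmk00} (equivalently Lemma \ref{lm01} with $a=t_0$, $b=(\alpha_\ell\beta_\ell)^M t_0$), the functions $e^{in\pi\kappa_\ell\log t}=e^{i\pi(nM)(\kappa_\ell/M)\log t}$ are mutually orthogonal in $L^2(t_0,(\alpha_\ell\beta_\ell)^M t_0,dt/t)$, each with squared norm $\int dt/t=M\log(\alpha_\ell\beta_\ell)=2M/\kappa_\ell$; the interval has been chosen precisely so that the total phase change $n\pi\kappa_\ell\cdot(2M/\kappa_\ell)=2\pi nM$ over it is a multiple of $2\pi$, killing the off-diagonal terms. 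Hence Parseval gives
$$\int_{t_0}^{(\alpha_\ell\beta_\ell)^M t_0} t\,\phi_x^2(\ell_2 t,t)\,dt=\frac{(2\pi\kappa_\ell)^2}{(1-\ell_2^2)^2}\,\frac{2M}{\kappa_\ell}\sum_{n\in\mathbb{Z}^*}|nc_n|^2.$$

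Finally, substituting $S_\ell=2\pi^2\kappa_\ell\sum_n|nc_n|^2$ collapses the constants to $\frac{4M}{(1-\ell_2^2)^2}S_\ell$, which is (\ref{=slm}); the two-sided bound (\ref{EBlt}) then follows at once by inserting (\ref{ESES}) at $t=t_0$, that is $(1-L)t_0 E(t_0)\le S_\ell\le(1+L)t_0 E(t_0)$. I expect the only genuinely delicate step to be the combination of the two sums at the start: one must track the relation (\ref{cC}) carefully so the exponentials align and the residual phase $e^{in\pi\kappa_\ell\log(1+\ell_2)}$ is absorbed into $d_n$ without changing $|d_n|=|nc_n|$. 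The rest is orthogonality and bookkeeping, with term-by-term differentiation and integration justified by (\ref{solreg}) and Remark \ref{rmk2}.
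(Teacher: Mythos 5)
Your proposal is correct and follows essentially the same route as the paper: evaluate $\phi_x$ at $x=\ell_2 t$ from the term-by-term differentiated series, use the relation (\ref{cC}) to merge the two exponentials into a single phase-shifted factor $nc_n e^{in\pi\kappa_\ell\log(1+\ell_2)}$, and then apply Parseval's identity for the orthogonal system of Lemma \ref{lm01} in $L^2\left(t_0,(\alpha_\ell\beta_\ell)^M t_0,dt/t\right)$, finishing with (\ref{ESES}) at $t=t_0$. The constants you track ($2M/\kappa_\ell$ for the squared norms, $S_\ell=2\pi^2\kappa_\ell\sum|nc_n|^2$) match the paper's computation exactly.
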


\begin{proof}
Thanks to (\ref{ph x}), we can evaluate $\phi _{x}$ at the endpoint $x=\ell
_{2}t$. We obtain%
\begin{eqnarray}
\phi _{x}(\ell _{2}t,t) &=&\pi \kappa _{\ell }\sum_{n\in 
%TCIMACRO{\U{2124} }%
%BeginExpansion
\mathbb{Z}
%EndExpansion
^{\ast }}inc_{n}\left( \dfrac{e^{in\pi \kappa _{\ell }\log \left( 1+\ell
_{2}\right) t}}{\left( 1+\ell _{2}\right) t}+\dfrac{e^{in\pi \kappa _{\ell
}\log \left( 1+\ell _{2}\right) t}}{\left( 1-\ell _{2}\right) t}\right) 
\notag \\
&=&\pi \kappa _{\ell }\sum_{n\in 
%TCIMACRO{\U{2124} }%
%BeginExpansion
\mathbb{Z}
%EndExpansion
^{\ast }}inc_{n}\left( \dfrac{1}{\left( 1+\ell _{2}\right) t}+\dfrac{1}{%
\left( 1-\ell _{2}\right) t}\right) e^{in\pi \kappa _{\ell }\log \left(
1+\ell _{2}\right) t},  \notag
\end{eqnarray}%
hence%
\begin{equation}
t\phi _{x}(\ell _{2}t,t)=\frac{2\pi \kappa _{\ell }}{\left( 1-\ell
_{2}^{2}\right) }\sum_{n\in 
%TCIMACRO{\U{2124} }%
%BeginExpansion
\mathbb{Z}
%EndExpansion
^{\ast }}inc_{n}e^{in\pi \kappa _{\ell }\log \left( 1+\ell _{2}\right) }\
e^{in\pi \kappa _{\ell }\log t}.  \label{tphx+}
\end{equation}%
By Lemma \ref{lm01} and Parseval's equality applied to the function $t\phi
_{x}(\ell _{2}t,t)\in L^{2}\left( t_{0},\left( \alpha _{\ell }\beta _{\ell
}\right) ^{M}t_{0},\frac{dt}{t}\right) $, we obtain 
\begin{equation*}
\int_{t_{0}}^{\left( \alpha _{\ell }\beta _{\ell }\right) ^{M}t_{0}}t\phi
_{x}^{2}(\ell _{2}t,t)dt=\frac{4\pi ^{2}\kappa _{\ell }^{2}}{\left( 1-\ell
_{2}^{2}\right) ^{2}}\left( \frac{2M}{\kappa _{\ell }}\right) \sum_{n\in 
%TCIMACRO{\U{2124} }%
%BeginExpansion
\mathbb{Z}
%EndExpansion
^{\ast }}\left\vert nc_{n}e^{in\pi \kappa _{\ell }\log \left( 1+\ell
_{2}\right) }\right\vert ^{2}=\frac{8M\pi ^{2}\kappa _{\ell }}{\left( 1-\ell
_{2}^{2}\right) ^{2}}\sum_{n\in 
%TCIMACRO{\U{2124} }%
%BeginExpansion
\mathbb{Z}
%EndExpansion
^{\ast }}\left\vert nc_{n}\right\vert ^{2}
\end{equation*}%
which is (\ref{=slm}). The estimate (\ref{EBlt}) follows by using (\ref{ESES}%
) for $t=t_{0}$.
\end{proof}

The next theorem gives the direct and inverse inequalities for the solution
of Problem (\ref{wave}) at the endpoint $x=\ell _{2}t$.

\begin{theorem}
\label{thobs1}Under the assumptions \emph{(\ref{tlike}) }and \emph{(\ref{ic}%
),} we have:

$\bullet $ For every $T\geq 0,$ the solution of \emph{(\ref{wave})}
satisfies the direct inequality 
\begin{equation}
\int_{t_{0}}^{t_{0}+T}\phi _{x}^{2}(\ell _{2}t,t)dt\leq K_{\ell }\left(
T\right) \text{ }E\left( t_{0}\right) .  \label{D1}
\end{equation}%
with a constant $K_{\ell }\left( T\right) $ depending only on $\ell
_{1},\ell _{2}$ and $T.$

$\bullet $ If $T\geq T_{\ell },$ Problem \emph{(\ref{wave})} is observable
at $x=\ell _{2}t$ and it holds that 
\begin{equation}
E\left( t_{0}\right) \leq \frac{\alpha _{\ell }\beta _{\ell }\left( 1-\ell
_{2}^{2}\right) ^{2}}{4\left( 1-L\right) }\int_{t_{0}}^{t_{0}+T}\phi
_{x}^{2}(\ell _{2}t,t)dt.  \label{obs1}
\end{equation}%
Conversely, if $T<T_{\ell },$ \emph{(\ref{obs1})} does not hold.
\end{theorem}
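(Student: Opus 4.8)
The plan is to read off both inequalities from Lemma~\ref{lm2}, which already controls the weighted observation $\int_{t_0}^{(\alpha_\ell\beta_\ell)^{M}t_0}t\,\phi_x^2(\ell_2 t,t)\,dt$ over an integer number $M$ of ``periods'' and relates it to $E(t_0)$ through the energy bounds (\ref{ESES}). The only manipulation needed is to pass between this weighted integral and the plain integral $\int\phi_x^2\,dt$ appearing in (\ref{D1}) and (\ref{obs1}); this is achieved by estimating the factor $1/t$ from above or below on the relevant time window, the direction of the estimate being what separates the direct inequality from the observability inequality.

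For the direct inequality, fix $T\geq 0$ and let $M$ be the smallest positive integer with $(\alpha_\ell\beta_\ell)^{M}t_0\geq t_0+T$, so that $[t_0,t_0+T]\subseteq[t_0,(\alpha_\ell\beta_\ell)^{M}t_0]$. Since $\phi_x^2\geq 0$ and $1/t\leq 1/t_0$ on $[t_0,t_0+T]$, I would write
\[
\int_{t_0}^{t_0+T}\phi_x^2(\ell_2 t,t)\,dt\leq \frac{1}{t_0}\int_{t_0}^{(\alpha_\ell\beta_\ell)^{M}t_0} t\,\phi_x^2(\ell_2 t,t)\,dt,
\]
and then invoke the upper bound in (\ref{EBlt}) to obtain (\ref{D1}) with $K_\ell(T)=4M(1+L)/(1-\ell_2^2)^2$, the integer $M$ being the number of periods required to cover $[t_0,t_0+T]$; the factor $t_0$ cancels and the remaining dependence is through $\ell_1,\ell_2$ and $T$.

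For the observability inequality, assume $T\geq T_\ell$. Recalling from the Preliminaries that $T_\ell=(\alpha_\ell\beta_\ell-1)t_0$, i.e. $t_0+T_\ell=\alpha_\ell\beta_\ell t_0$, the window $[t_0,t_0+T]$ contains $[t_0,\alpha_\ell\beta_\ell t_0]$. Using $\phi_x^2\geq 0$ together with the \emph{reverse} bound $1/t\geq 1/(\alpha_\ell\beta_\ell t_0)$ on this sub-window, I would estimate
\[
\int_{t_0}^{t_0+T}\phi_x^2(\ell_2 t,t)\,dt\geq \frac{1}{\alpha_\ell\beta_\ell\, t_0}\int_{t_0}^{\alpha_\ell\beta_\ell t_0} t\,\phi_x^2(\ell_2 t,t)\,dt,
\]
then apply Lemma~\ref{lm2} with $M=1$ (formula (\ref{=slm})) and the lower energy bound $S_\ell\geq(1-L)t_0\,E(t_0)$ coming from (\ref{ESES}). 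After rearranging, the factors $t_0$ cancel and one arrives exactly at (\ref{obs1}), whose constant depends only on $\ell_1,\ell_2$.

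The step requiring genuine care is the converse, which is the heart of the sharpness claim. For $T<T_\ell$ one has $t_0+T<\alpha_\ell\beta_\ell t_0$, so in the variable $\tau=\log t$ the observation window $[\log t_0,\log(t_0+T)]$ is strictly shorter than one period $\log(\alpha_\ell\beta_\ell)=2/\kappa_\ell$ of the exponentials in (\ref{tphx+}). By that formula, $t\,\phi_x(\ell_2 t,t)$ is, up to the constant $2\pi\kappa_\ell/(1-\ell_2^2)$ and the phase $e^{in\pi\kappa_\ell\log(1+\ell_2)}$, an arbitrary zero-mean element of $L^2$ over one period, its coefficients being prescribable freely in $\ell^2$ via $nc_n$ (the index set $\mathbb{Z}^{\ast}$ forces the zero-mean constraint). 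Since the complementary arc $(\log(t_0+T),\log t_0+2/\kappa_\ell)$ has positive length, there exist nonzero zero-mean $L^2$ functions supported there (take two disjoint bumps of opposite sign and equal mass). Realizing such a function as $t\,\phi_x(\ell_2 t,t)$ through Theorem~\ref{thexist1} produces a genuine solution with data in $H^1_0(I_{t_0})\times L^2(I_{t_0})$ for which $\int_{t_0}^{t_0+T}\phi_x^2(\ell_2 t,t)\,dt=0$, while $E(t_0)>0$ since $\sum_{n}|nc_n|^2>0$ by (\ref{ncn+}) and $E(t_0)\geq S_\ell/(t_0(1+L))$ by (\ref{ES}). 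Consequently no finite constant can make (\ref{obs1}) hold when $T<T_\ell$, which establishes the stated sharpness; the only subtlety to check is the zero-mean bookkeeping and, if a real-valued solution is desired, the imposition of the conjugacy relation $c_{-n}=\overline{c_n}$, which the above construction can respect.
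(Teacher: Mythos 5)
Your proposal reproduces the paper's proof essentially step for step: the direct and inverse inequalities are obtained from Lemma \ref{lm2} by bounding the weight $t$ below by $t_{0}$ (resp.\ above by $\alpha _{\ell }\beta _{\ell }t_{0}$ with $M=1$) and invoking (\ref{ESES}), and the converse uses the same device of a nonzero zero-mean boundary trace supported in the gap $\left( t_{0}+T,\alpha _{\ell }\beta _{\ell }t_{0}\right) $, which is nonempty precisely when $T<T_{\ell }$. The one point you pass over lightly --- that an arbitrary zero-mean $\ell ^{2}$ sequence $\left\{ n\mathbf{c}_{n}\right\} $ is actually realized by admissible data $(\phi ^{0},\phi ^{1})\in H_{0}^{1}\left( I_{t_{0}}\right) \times L^{2}\left( I_{t_{0}}\right) $ --- is where the paper's proof does its explicit work, symmetrizing $\mathbf{G}=\mathbf{g}/\left( t_{0}+x\right) $ into an even-like part $\tilde{\phi}_{x}^{0}$ and an odd-like part $\tilde{\phi}^{1}$ and using $g_{0}=0$ to check that $\tilde{\phi}^{0}$ vanishes at both endpoints, so this step should be carried out rather than asserted.
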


\begin{proof}
$\bullet $ The right-hand side of (\ref{EBlt}) yields%
\begin{equation*}
t_{0}\int_{t_{0}}^{\left( \alpha _{\ell }\beta _{\ell }\right)
^{M}t_{0}}\phi _{x}^{2}(\ell _{2}t,t)dt\leq \frac{4M\left( 1+L\right) t_{0}}{%
\left( 1-\ell _{2}^{2}\right) ^{2}}E\left( t_{0}\right) .
\end{equation*}%
Noting that $\alpha _{\ell }\beta _{\ell }>1,$ then for every $T\geq 0,$ we
can take the integer $M$ large enough such that%
\begin{equation*}
\left( \alpha _{\ell }\beta _{\ell }\right) ^{M}t_{0}\geq t_{0}+T.
\end{equation*}%
For instance we can take $M\geq \log \left( 1+\left( T/t_{0}\right) \right)
/\log \left( \alpha _{\ell }\beta _{\ell }\right) .$ Since the integrated
function is nonnegative, then (\ref{EBlt}) holds for $K_{\ell }\left(
T\right) :=4M\left( 1+L\right) /\left( 1-\ell _{2}^{2}\right) ^{2}.$

$\bullet $ The left-hand side of (\ref{EBlt}), for $M=1,$ yields%
\begin{equation*}
\frac{4\left( 1-L\right) t_{0}}{\left( 1-\ell _{2}^{2}\right) ^{2}}E\left(
t_{0}\right) \leq \alpha _{\ell }\beta _{\ell }t_{0}\int_{t_{0}}^{\alpha
_{\ell }\beta _{\ell }t_{0}}\phi _{x}^{2}(\ell _{2}t,t)dt
\end{equation*}%
and thus inequality (\ref{obs1}) holds for $T=\alpha _{\ell }\beta _{\ell
}t_{0}-t_{0}=T_{\ell }$ and therefore for every $T\geq T_{\ell }$ as well.

It remain to show that the observability does not hold for every $T<T_{\ell
}.$ Let $\varepsilon >0,\varepsilon \in \left( 0,1\right) ,$ and consider a
function $g\in L^{2}\left( t_{0},\alpha _{\ell }\beta _{\ell
}t_{0},dt/t\right) ,$ not identically zero and satisfying 
\begin{gather}
supp\left( g\right) \subset \subset \left( \left( 1-\varepsilon \right)
\alpha _{\ell }\beta _{\ell }t_{0},\alpha _{\ell }\beta _{\ell }t_{0}\right)
,\text{ \ \ for }t\in \left( t_{0},\alpha _{\ell }\beta _{\ell }t_{0}\right) 
\label{suppg} \\
\int_{t_{0}}^{\alpha _{\ell }\beta _{\ell }t_{0}}g\left( t\right) \frac{dt}{t%
}=0.  \label{intgg}
\end{gather}%
Since $\left\{ \sqrt{\kappa _{\ell }/2}\ e^{in\pi \kappa _{\ell }\log
t}\right\} _{n\in 
%TCIMACRO{\U{2124} }%
%BeginExpansion
\mathbb{Z}
%EndExpansion
}$ is a complete basis for $L^{2}\left( t_{0},\alpha _{\ell }\beta _{\ell
}t_{0},dt/t\right) $, then $g$ can be written as%
\begin{equation}
g\left( t\right) =\sum_{n\in 
%TCIMACRO{\U{2124} }%
%BeginExpansion
\mathbb{Z}
%EndExpansion
}g_{n}e^{in\pi \kappa _{\ell }\log t},\text{ \ \ }t\in \left( t_{0},\alpha
_{\ell }\beta _{\ell }t_{0}\right) .  \label{gseries}
\end{equation}%
Note that (\ref{intgg}) ensures that $g_{0}=0.$ Let us define 
\begin{equation}
\mathbf{c}_{0}=0,\text{ \ and \ }\mathbf{c}_{n}=\frac{g_{n}}{i2\pi n\sqrt{%
2\kappa _{\ell }}}e^{-in\pi \kappa _{\ell }\log \left( 1+\ell _{2}\right) },%
\text{ \ \ for }n\in 
%TCIMACRO{\U{2124} }%
%BeginExpansion
\mathbb{Z}
%EndExpansion
^{\ast }.  \label{cng}
\end{equation}%
The sequence $\left\{ \mathbf{c}_{n}\right\} _{n\in 
%TCIMACRO{\U{2124} }%
%BeginExpansion
\mathbb{Z}
%EndExpansion
}$ satisfies $\left\vert n\mathbf{c}_{n}\right\vert \leq \left\vert
g_{n}/\left( i2\pi \sqrt{2\kappa _{\ell }}\right) \right\vert ,$ then the
series%
\begin{equation*}
\sum_{n\in 
%TCIMACRO{\U{2124} }%
%BeginExpansion
\mathbb{Z}
%EndExpansion
^{\ast }}i2\pi n\sqrt{2\kappa _{\ell }}\mathbf{c}_{n}\text{ }e^{i\pi n\kappa
_{\ell }\log \left( t_{0}+x\right) }
\end{equation*}%
is converging and define a non-identically zero function $\mathbf{g}\in
L^{2}\left( -\ell _{1}t_{0},L_{2}t_{0},dx/\left( t_{0}+x\right) \right) .$
Having (\ref{30}) in mind, we need to choose two functions $\tilde{\phi}%
_{x}^{0}\in L^{2}\left( -\ell _{1}t_{0},L_{2}t_{0}\right) $ and $\tilde{\phi}%
^{1}\in L^{2}\left( -\ell _{1}t_{0},L_{2}t_{0}\right) $, satisfying
respectively the even-like and odd-like symmetries considered in (\ref%
{phi0x+}) and (\ref{phi1+}), with respect to $x=\ell _{2}t_{0},\ $and such
that 
\begin{equation}
\left( t_{0}+x\right) \left( \tilde{\phi}_{x}^{0}+\tilde{\phi}^{1}\right) =%
\mathbf{g(x)},\text{ \ \ for }\ x\in \left( -\ell
_{1}t_{0},L_{2}t_{0}\right) ,  \label{icchoice}
\end{equation}%
i.e.,%
\begin{equation}
\tilde{\phi}_{x}^{0}(x)+\tilde{\phi}^{1}(x)=\mathbf{G}\left( x\right) :=%
\frac{\mathbf{g(x)}}{t_{0}+x},\text{ \ \ for }\ x\in \left( -\ell
_{1}t_{0},L_{2}t_{0}\right) .  \label{gw}
\end{equation}%
This is realised in the following way. Consider the function $\mathbf{\tilde{%
G}}$ defined as$\ $%
\begin{equation*}
\mathbf{\tilde{G}}(x)=\left\{ 
\begin{array}{ll}
\frac{1+\ell _{2}}{1-\ell _{2}}\mathbf{G}\left( -t_{0}+\frac{1+\ell _{2}}{%
1-\ell _{2}}\left( t_{0}-x\right) \right) , & x\in \left( -\ell
_{1}t_{0},\ell _{2}t_{0}\right) ,\smallskip  \\ 
\frac{1-\ell _{2}}{1+\ell _{2}}\mathbf{G}\left( t_{0}-\frac{1-\ell _{2}}{%
1+\ell _{2}}\left( t_{0}+x\right) \right) ,\text{ \ } & x\in \left( \ell
_{2}t_{0},L_{2}t_{0}\right) ,%
\end{array}%
\right. 
\end{equation*}%
then it suffices to take%
\begin{equation*}
\tilde{\phi}_{x}^{0}\left( x\right) =\frac{\mathbf{G}(x)+\mathbf{\tilde{G}}%
(x)}{2}\text{ \ \ and \ \ }\tilde{\phi}^{1}\left( x\right) =\frac{\mathbf{G}%
(x)-\mathbf{\tilde{G}}(x)}{2},\text{ \ \ for }\ x\in \left( -\ell
_{1}t_{0},L_{2}t_{0}\right) .
\end{equation*}%
One can check that $\tilde{\phi}_{x}^{0}$ and $\tilde{\phi}^{1}$ are
respectively an even-like and an odd-like function on $\left( -\ell
_{1}t_{0},L_{2}t_{0}\right) >$ In particular  
\begin{equation*}
\int\limits_{-\ell _{1}t_{0}}^{L_{2}t_{0}}\tilde{\phi}^{1}\left( x\right)
dx=0
\end{equation*}%
and by (\ref{gw}) it follows that%
\begin{equation*}
\tilde{\phi}^{0}\left( \ell _{2}t_{0}\right) -\tilde{\phi}^{0}\left( -\ell
_{1}t_{0}\right) =\int\limits_{-\ell _{1}t_{0}}^{\ell _{2}t_{0}}\tilde{\phi}%
_{x}^{0}\left( x\right) dx=\frac{1}{2}\int\limits_{-\ell
_{1}t_{0}}^{L_{2}t_{0}}\tilde{\phi}_{x}^{0}\left( x\right) dx=\frac{1}{2}%
\int\limits_{-\ell _{1}t_{0}}^{L_{2}t_{0}}\mathbf{g}(x)\frac{dx}{t_{0}+x},
\end{equation*}%
i.e. $\tilde{\phi}^{0}\left( \ell _{2}t_{0}\right) -\tilde{\phi}^{0}\left(
-\ell _{1}t_{0}\right) =0\times \mathbf{c}_{0}=0.$ Thus, we can always
assume that $\tilde{\phi}^{0}\ $is in $H_{0}^{1}\left( -\ell
_{1}t_{0},L_{2}t_{0}\right) .$

Going back to (\ref{tphx+}), for $M=1,$ the solution of Problem (\ref{wave})
corresponding to these initial conditions satisfies%
\begin{equation*}
\phi _{x}(\ell _{2}t,t)=\frac{1}{\left( 1-\ell _{2}^{2}\right) t}\sum_{n\in 
%TCIMACRO{\U{2124} }%
%BeginExpansion
\mathbb{Z}
%EndExpansion
^{\ast }}i2\pi \kappa _{\ell }n\mathbf{c}_{n}e^{in\pi \kappa _{\ell }\log
\left( 1+\ell _{2}\right) }\ e^{in\pi \kappa _{\ell }\log t}=\frac{1}{\left(
1-\ell _{2}^{2}\right) t}\sum_{n\in 
%TCIMACRO{\U{2124} }%
%BeginExpansion
\mathbb{Z}
%EndExpansion
^{\ast }}g_{n}e^{in\pi \kappa _{\ell }\log t},
\end{equation*}%
hence 
\begin{equation*}
\phi _{x}(\ell _{2}t,t)=\frac{g\left( t\right) }{\left( 1-\ell
_{2}^{2}\right) t}.
\end{equation*}
Squaring both sides and integrating on $\left( t_{0},\left( 1-\varepsilon
\right) \alpha _{\ell }\beta _{\ell }t_{0}\right) ,$ we get 
\begin{equation*}
\int_{t_{0}}^{\left( 1-\varepsilon \right) \alpha _{\ell }\beta _{\ell
}t_{0}}\phi _{x}^{2}(\ell _{2}t,t)dt=\frac{1}{\left( 1-\ell _{2}^{2}\right)
^{2}}\int_{t_{0}}^{\left( 1-\varepsilon \right) \alpha _{\ell }\beta _{\ell
}t_{0}}g^{2}\left( t\right) \frac{dt}{t^{2}}=0,
\end{equation*}%
since supp$\left( g\right) \cap \left( t_{0},\left( 1-\varepsilon \right)
\alpha _{\ell }\beta _{\ell }t_{0}\right) =\varnothing .$ This means that
the observability inequality (\ref{obs1}) does not hold for every $T<T_{\ell
}$.
\end{proof}

\begin{remark}
We obtain the same time of observability at the left boundary $x=-\ell _{1}t$%
. The proof is parallel to the precedent one.
\end{remark}

\begin{remark}
\label{rmktobs}The time of observability $T_{\ell }$ can be predicted by a
simple argument, see Figure $\ref{fig3}$. An initial disturbance
concentrated near $x=\ell _{2}t_{0}$ may propagate to the left, as $t$
increases, and bounce back on the left boundary, then travel to reach the
left boundary, when $t$ is close to $\alpha _{\ell }\beta _{\ell }t_{0}$,
see \emph{Figure }$\ref{fig3}$\emph{\ (}left\emph{)}. Thus, the needed time
to complete this journey is close to $\left( \alpha _{\ell }\beta _{\ell
}-1\right) t_{0},$ which is the sharp time of observability $T_{\ell }$. 
\emph{Figure }$\ref{fig3}$\emph{\ (}right\emph{)} shows that we need the
same time $T_{\ell }$ for an initial disturbance concentrated near $x=-\ell
_{1}t_{0}$.
\end{remark}

\begin{figure}[tbph]
\centering
\includegraphics[width=120mm,height=50mm]{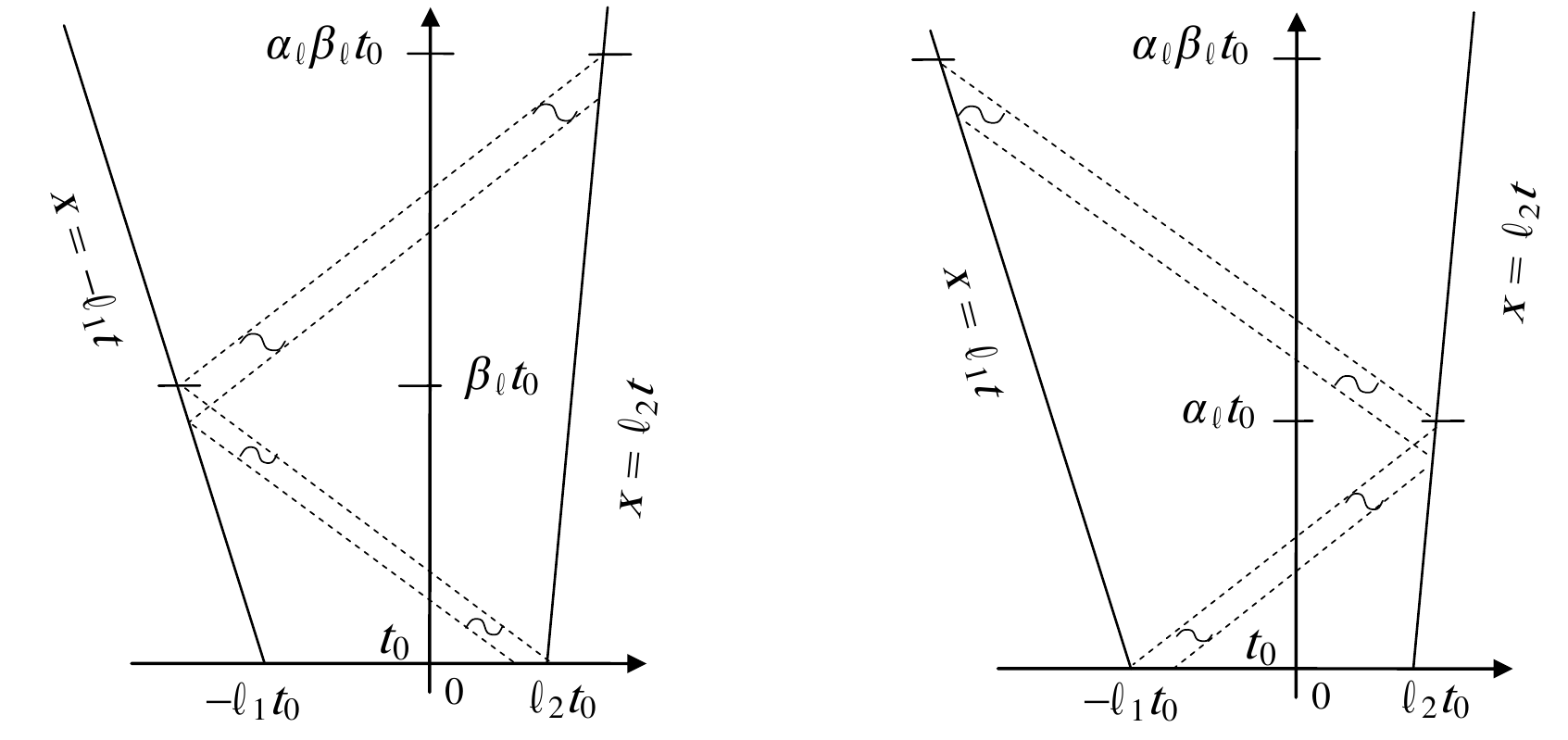}
\caption{Propagation of a wave with a small support near an endpoint $(\ell
_{2}<\ell _{1})$.}
\label{fig3}
\end{figure}

\subsection{Controllability at one endpoint}

First, let us check that the controllability problem (\ref{wavec}) can be
reduced to a null-controllability one, i.e., we can always assume that 
\begin{equation}
u\left( t_{0}+T\right) =u_{t}\left( t_{0}+T\right) =0\ \ \text{on }%
I_{t_{0}+T}.  \label{uT=0}
\end{equation}%
Indeed, consider the homogenous backward problem%
\begin{equation*}
\left\{ 
\begin{array}{ll}
z_{tt}-z_{xx}=0,\ \smallskip  & \text{in }Q_{t_{0}+T}, \\ 
u\left( -\ell _{1}t,t\right) =0,\text{ \ \ }z\left( \ell _{2}t,t\right)
=0,\smallskip  & \text{for \ \ }t\in \left( t_{0},t_{0}+T\right) , \\ 
z(x,t_{0}+T)=u_{T}^{0}\left( x\right) ,\text{ \ \ }z_{t}\left(
x,t_{0}+T\right) =u_{T}^{1}\left( x\right) ,\text{ \ } & \text{for \ \ }x\in
I_{t_{0}+T}.%
\end{array}%
\right. 
\end{equation*}%
One can argue as in \cite{Mir:96} to show that this problem has a solution
in the transposition sense. Then assume that there exists a function $v\in
L^{2}\left( t_{0},t_{0}+T\right) ,$ acting at the endpoint $x=\ell _{2}t,$
driving the solution of the forward problem 
\begin{equation*}
\left\{ 
\begin{array}{ll}
w_{tt}-w_{xx}=0,\ \smallskip  & \text{in }Q_{t_{0}+T}, \\ 
w\left( -\ell _{1}t,t\right) =0,\text{ \ \ }w\left( \ell _{2}t,t\right)
=v\left( t\right) ,\smallskip  & \text{for \ \ }t\in \left(
t_{0},t_{0}+T\right) , \\ 
w(x,t_{0})=u^{0}\left( x\right) -z(x,t_{0}),\text{ \ \ }w_{t}\left(
x,t_{0}\right) =u^{1}\left( x\right) -z_{t}(x,t_{0}),\text{ \ } & \text{for
\ \ }x\in I_{t_{0}}.%
\end{array}%
\right. 
\end{equation*}%
to the rest, i.e. $w(x,t_{0}+T)=w_{t}\left( x,t_{0}+T\right) =0.$ Then, $v$
drives $u=z+w$, solution of (\ref{wavec}), to $u(x,t_{0}+T)=u_{T}^{0}\left(
x\right) $ and $u_{t}\left( x,t_{0}+T\right) =u_{T}^{1}\left( x\right) $.

The null-controllability of (\ref{wavec}) at one of the endpoints is derived
by mean of HUM. Since the proofs are similar, only the case of the endpoint $%
x=\ell _{2}t$ is considered.

\begin{theorem}
\label{thc1}Under the assumptions \emph{(\ref{tlike})} and \emph{(\ref{u0}), 
}Problem \emph{(\ref{wavec}) }is exactly controllable at the endpoint $%
x=\ell _{2}t$ for $T\geq T_{\ell }.$ Moreover, we can choose a control $v$
satisfying 
\begin{equation}
\int_{t_{0}}^{t_{0}+T}v^{2}\left( t\right) dt\leq K_{\ell }\left( T\right)
E\left( t_{0}\right) ,  \label{vlt}
\end{equation}%
where $K_{\ell }\left( T\right) $ is a constant depending on $\ell _{1},\ell
_{2}$ and $T.$

Conversely, if $T<T_{\ell },$ Problem \emph{(\ref{wavec})} is not
controllable at $x=\ell _{2}t$.
\end{theorem}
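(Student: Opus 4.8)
The plan is to apply the Hilbert uniqueness method (HUM, see \cite{Lion:88}). Since the reduction to null--controllability has just been carried out, it suffices to drive $(u^0,u^1)$ to rest at $t=t_0+T$. For any $(\phi^0,\phi^1)\in H_0^1(I_{t_0})\times L^2(I_{t_0})$ let $\phi$ denote the solution of the homogeneous problem \eqref{wave}; by the direct inequality \eqref{D1} its Neumann trace $\phi_x(\ell_2 t,t)$ belongs to $L^2(t_0,t_0+T)$, which is the hidden regularity that makes the scheme work. Using this trace as a Dirichlet datum, I would solve the backward problem
\begin{equation*}
\left\{
\begin{array}{ll}
\psi_{tt}-\psi_{xx}=0, & \text{in }Q_{t_0+T},\\
\psi(-\ell_1 t,t)=0,\quad \psi(\ell_2 t,t)=\phi_x(\ell_2 t,t), & t\in(t_0,t_0+T),\\
\psi(x,t_0+T)=0,\quad \psi_t(x,t_0+T)=0, & x\in I_{t_0+T},
\end{array}
\right.
\end{equation*}
whose transposition solution exists by \cite{Mir:96}, and define the HUM operator $\Lambda(\phi^0,\phi^1):=(\psi_t(\cdot,t_0),-\psi(\cdot,t_0))\in H^{-1}(I_{t_0})\times L^2(I_{t_0})$.

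The central step is the duality identity. Multiplying the equation for $\psi$ by $\phi$, the equation for $\phi$ by $\psi$, subtracting and integrating over the noncylindrical domain $Q_{t_0+T}$, the Green formula leaves only boundary contributions. On $x=\ell_2 t$ the outward conormal is proportional to $(1,-\ell_2)$; since $\phi(\ell_2 t,t)=0$ forces $\phi_t=-\ell_2\phi_x$ there while $\psi(\ell_2 t,t)=\phi_x$, the right lateral term collapses to $(1-\ell_2^2)\phi_x^2(\ell_2 t,t)$, the left lateral term vanishes because $\phi=\psi=0$ on $x=-\ell_1 t$, and the final--time term vanishes by the homogeneous data at $t_0+T$. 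Hence
\begin{equation*}
\langle\Lambda(\phi^0,\phi^1),(\phi^0,\phi^1)\rangle=(1-\ell_2^2)\int_{t_0}^{t_0+T}\phi_x^2(\ell_2 t,t)\,dt.
\end{equation*}
I expect this to be the main obstacle: the moving boundary makes the conormal time--dependent, so the cancellations producing the factor $1-\ell_2^2$ must be tracked carefully, and the identity has to be justified for the transposition solution $\psi$ rather than a classical one.

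With the identity in hand the remainder is routine. The observability inequality \eqref{obs1} of Theorem \ref{thobs1}, valid for $T\geq T_\ell$, gives coercivity of this quadratic form, since $E(t_0)=\tfrac12\|(\phi^0,\phi^1)\|^2_{H_0^1\times L^2}$; the direct inequality \eqref{D1} gives continuity. By the Lax--Milgram theorem, $\Lambda$ is an isomorphism from $H_0^1(I_{t_0})\times L^2(I_{t_0})$ onto $H^{-1}(I_{t_0})\times L^2(I_{t_0})$. Given $(u^0,u^1)\in L^2(I_{t_0})\times H^{-1}(I_{t_0})$, the pair $(u^1,-u^0)$ lies in the target space, so I would solve $\Lambda(\phi^0,\phi^1)=(u^1,-u^0)$; then $\psi(x,t_0)=u^0$, $\psi_t(x,t_0)=u^1$ and $\psi(x,t_0+T)=\psi_t(x,t_0+T)=0$, so $u:=\psi$ is the solution of \eqref{wavec} driven to rest by $v(t):=\phi_x(\ell_2 t,t)$. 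The estimate \eqref{vlt} is then immediate from \eqref{D1}, since $\int_{t_0}^{t_0+T}v^2\,dt=\int_{t_0}^{t_0+T}\phi_x^2(\ell_2 t,t)\,dt\leq K_\ell(T)E(t_0)$.

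For the converse, I would invoke the observability--controllability duality: controllability of \eqref{wavec} at $x=\ell_2 t$ in time $T$ is equivalent to the observability inequality \eqref{obs1} holding for the adjoint problem in time $T$. Since Theorem \ref{thobs1} already exhibits, for every $T<T_\ell$, a nonzero solution whose boundary observation $\phi_x(\ell_2 t,t)$ vanishes on $(t_0,t_0+T)$, the inequality \eqref{obs1} fails there, and therefore \eqref{wavec} cannot be controllable for $T<T_\ell$.
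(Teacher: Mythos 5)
Your proposal is correct and follows essentially the same route as the paper: the same HUM operator $\Lambda_1(\phi^0,\phi^1)=(\psi_t(t_0),-\psi(t_0))$ built from the backward problem with Dirichlet datum $\phi_x(\ell_2 t,t)$, the same transposition identity yielding the factor $(1-\ell_2^2)$, coercivity from Theorem \ref{thobs1} for $T\geq T_\ell$, and the converse via the nonzero solution with vanishing boundary trace constructed there. The only cosmetic difference is that you phrase the converse as an appeal to observability--controllability duality, whereas the paper spells out the explicit choice of uncontrollable data $(u^0,u^1)$; the mechanism is identical.
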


\begin{proof}
$\bullet $ Let $\phi $ be the solution of problem (\ref{wave}). The idea of
HUM is to seek a control $v$ in the special form $v=\phi _{x}\left( \ell
_{2}t,t\right) \in L^{2}(t_{0},t_{0}+T),$ for a suitable choice of $\phi ^{0}
$ and $\phi ^{1}.$ First, we consider the backward problem%
\begin{equation}
\left\{ 
\begin{array}{ll}
\psi _{tt}-\psi _{xx}=0,\ \smallskip  & \text{in }Q_{t_{0}+T}, \\ 
\psi \left( -\ell _{1}t,t\right) =0,\text{ \ }\psi \left( \ell
_{2}t,t\right) =\phi _{x}\left( \ell _{2}t,t\right) ,\smallskip  & \ \text{%
for \ }t\in \left( t_{0},T\right) , \\ 
\psi \left( x,t_{0}+T\right) =0\text{,\ \ }\psi _{t}\left( x,t_{0}+T\right)
=0. & \ \text{for \ \ }x\in I_{t_{0}+T}.%
\end{array}%
\right.   \label{wave3}
\end{equation}%
We obtain the a linear map, that relates $(\phi ^{0},\phi ^{1})$ to the
initial data $(\psi _{t}\left( t_{0}\right) ,-\psi \left( t_{0}\right) )$ ,%
\begin{align*}
\Lambda _{1}:H_{0}^{1}\left( I_{t_{0}}\right) \times L^{2}\left(
I_{t_{0}}\right) & \longrightarrow H^{-1}\left( I_{t_{0}}\right) \times
L^{2}\left( I_{t_{0}}\right)  \\
(\phi ^{0},\phi ^{1})& \longmapsto (\psi _{t}\left( t_{0}\right) ,-\psi
\left( t_{0}\right) ).
\end{align*}%
The space $H_{0}^{1}\left( I_{t_{0}}\right) \times L^{2}\left(
I_{t_{0}}\right) $\ is equipped with the energy norm. To check that it is
possible to choose $(\phi ^{0},\phi ^{1})$ such that $(\psi _{t}\left(
t_{0}\right) ,-\psi \left( t_{0}\right) )=(u^{1},-u^{0})$, we argue as in 
\cite{Komo:94}. Since the solution of (\ref{wave3}) is taken in the
transposition sense, it comes that%
\begin{multline}
0=\int_{t_{0}}^{t_{0}+T}\left\langle \left( \psi _{tt}-\psi _{xx}\right)
,\phi \right\rangle _{H_{0}^{1}\left( I_{t}\right) }dt=-\left\langle \psi
_{t}\left( t_{0}\right) ,\phi ^{0}\right\rangle _{H_{0}^{1}\left(
I_{t_{0}}\right) }+\int_{-\ell _{1}t_{0}}^{\ell _{2}t_{0}}\psi \left(
t_{0}\right) \phi ^{1}dx  \label{diverg} \\
+\ell _{2}\int_{t_{0}}^{t_{0}+T}\psi \left( \ell _{2}t,t\right) \phi
_{t}\left( \ell _{2}t,t\right) dt+\int_{t_{0}}^{t_{0}+T}\psi \left( \ell
_{2}t,t\right) \phi _{x}\left( \ell _{2}t,t\right) dt
\end{multline}%
where $\left\langle \text{ },\text{ }\right\rangle _{X}$ denotes the duality
product between a Banach space $X$ and its dual. Observing that the boundary
condition $\phi \left( \ell _{2}t,t\right) =0$ implies that\ $\ell _{2}\phi
_{x}\left( \ell _{2}t,t\right) +\phi _{t}\left( \ell _{2}t,t\right) =0,$
i.e., 
\begin{equation*}
\phi _{t}\left( \ell _{2}t,t\right) =-\ell _{2}\phi _{x}\left( \ell
_{2}t,t\right) .
\end{equation*}%
Then, we can rewrite (\ref{diverg}) as%
\begin{equation*}
0=-\left\langle \psi _{t}\left( t_{0}\right) ,\phi ^{0}\right\rangle
_{H_{0}^{1}\left( I_{t_{0}}\right) }+\int_{-\ell _{1}t_{0}}^{\ell
_{2}t_{0}}\psi \left( t_{0}\right) \phi ^{1}dx-\ell
_{2}^{2}\int_{t_{0}}^{t_{0}+T}\phi _{x}^{2}\left( \ell _{2}t,t\right)
dt+\int_{t_{0}}^{t_{0}+T}\phi _{x}^{2}\left( \ell _{2}t,t\right) dt,
\end{equation*}%
hence 
\begin{eqnarray*}
\langle \Lambda _{1}(\phi ^{0},\phi ^{1}),(\phi ^{0},\phi ^{1})\rangle
_{H_{0}^{1}\left( I_{t_{0}}\right) \times L^{2}\left( I_{t_{0}}\right) }
&=&\left\langle \psi _{t}\left( t_{0}\right) ,\phi ^{0}\right\rangle
_{H_{0}^{1}\left( I_{t_{0}}\right) }-\int_{-\ell _{1}t_{0}}^{\ell
_{2}t_{0}}\psi \left( t_{0}\right) \phi ^{1}dx \\
&=&\left( 1-\ell _{2}^{2}\right) \int_{t_{0}}^{t_{0}+T}\phi _{x}^{2}\left(
\ell _{2}t,t\right) dt.
\end{eqnarray*}%
Thanks to Theorem \ref{thobs1}, we deduce that 
\begin{equation*}
\frac{4\left( 1-L\right) }{\alpha _{\ell }\beta _{\ell }\left( 1-\ell
_{2}^{2}\right) }E\left( t_{0}\right) \leq \langle \Lambda _{1}(\phi
^{0},\phi ^{1}),(\phi ^{0},\phi ^{1})\rangle \leq \left( 1-\ell
_{2}^{2}\right) K_{\ell }\left( T\right) \text{ }E\left( t_{0}\right) ,\text{
\ \ for }T\geq T_{\ell }.
\end{equation*}%
This means that $\Lambda _{1}$ is an isomorphism for $T\geq T_{\ell }$ and
therefore $(\phi ^{0},\phi ^{1})$ can be determined such that the control $%
v=\phi _{x}\left( \ell t,t\right) $ drive the solution of (\ref{wavec}) from
the initial data $u^{0},u^{1}$ to the rest, i.e. $u\left( t_{0}+T\right)
=u_{t}\left( t_{0}+T\right) =0$.

$\bullet $ If $T<T_{\ell },$ then (\ref{wave}) is not observable by Theorem %
\ref{thobs1}. This means that we can find non-zero initial data $(\phi
^{0},\phi ^{1})\in H_{0}^{1}\left( I_{t_{0}}\right) \times L^{2}\left(
I_{t_{0}}\right) $ such that 
\begin{equation}
\phi _{x}\left( \ell _{2}t,t\right) =0,\text{ \ \ }\forall t\in
(t_{0},t_{0}+T).  \label{v=0}
\end{equation}%
Choose $(u^{1},u^{0})\in H^{-1}\left( I_{t_{0}}\right) \times L^{2}\left(
I_{t_{0}}\right) $ such that%
\begin{equation*}
\left\langle u^{1},\phi ^{0}\right\rangle _{H_{0}^{1}\left( I_{t_{0}}\right)
}-\int_{-\ell _{1}t_{0}}^{\ell _{2}t_{0}}u^{0}\phi ^{1}dx\neq 0.
\end{equation*}%
Then, the solution of (\ref{wavec}), in the transposition sense, satisfies%
\begin{multline*}
\left\langle u_{t}\left( t_{0}+T\right) ,\phi \left( t_{0}+T\right)
\right\rangle _{H_{0}^{1}\left( I_{t_{0}+T}\right) }-\int_{-\ell _{1}\left(
t_{0}+T\right) }^{\ell _{2}\left( t_{0}+T\right) }u\left( t_{0}+T\right)
\phi _{t}\left( t_{0}+T\right) dx \\
-\left\langle u^{1},\phi ^{0}\right\rangle -\int_{-\ell _{1}t_{0}}^{\ell
_{2}t_{0}}u^{0}\phi ^{1}dx+\left( 1-\ell _{2}^{2}\right)
\int_{t_{0}}^{t_{0}+T}v\left( t\right) \phi _{x}\left( \ell _{2}t,t\right)
dt=0.
\end{multline*}%
Whatever the choice of the control function $v\in L^{2}(t_{0},t_{0}+T),$ the
last integral term always vanishes due to (\ref{v=0}). Hence 
\begin{multline*}
\left\langle u_{t}\left( t_{0}+T\right) ,\phi \left( t_{0}+T\right)
\right\rangle _{H_{0}^{1}\left( I_{t_{0}+T}\right) }-\int_{-\ell _{1}\left(
t_{0}+T\right) }^{\ell _{2}\left( t_{0}+T\right) }u\left( t_{0}+T\right)
\phi _{t}\left( t_{0}+T\right) dx \\
=\left\langle u^{1},\phi ^{0}\right\rangle _{H_{0}^{1}\left(
I_{t_{0}}\right) }-\int_{-\ell _{1}t_{0}}^{\ell _{2}t_{0}}u^{0}\phi
^{1}dx\neq 0
\end{multline*}%
and therefore we cannot have $u_{t}(t_{0}+T)=u(t_{0}+T)=0$ on $I_{t_{0}+T}$.
This completes the proof.
\end{proof}

\begin{remark}
The proof of Theorem \emph{\ref{thc1}} shows that the controllability of 
\emph{(\ref{wavec}), }at the endpoint $x=\ell _{2}t,$\emph{\ }is equivalent
to the observability of \emph{(\ref{wavec}) }at the same endpoint.
\end{remark}

\section{Observability and controllability at both endpoints}

If we can observe simultaneously the two endpoints of the interval, one
expects a shorter time of observability. The proof is more challenging in
this case.

\subsection{Observability at both endpoints}

Let us start by showing the following lemma.

\begin{lemma}
\label{lm5}Under the assumptions \emph{(\ref{tlike})} and \emph{(\ref{ic}), }%
the solution of \emph{(\ref{wave})} satisfies 
\begin{equation}
\left( 1-\ell _{1}^{2}\right) ^{2}\int_{t_{0}}^{\beta _{\ell
}^{M}t_{0}}t\phi _{x}^{2}(-\ell _{1}t,t)dt+\left( 1-\ell _{2}^{2}\right)
^{2}\int_{t_{0}}^{\alpha _{\ell }^{M}t_{0}}t\phi _{x}^{2}(\ell
_{2}t,t)dt=4MS_{\ell }  \label{slm2}
\end{equation}%
and it holds that%
\begin{multline}
4M\left( 1-L\right) t_{0}E\left( t_{0}\right) \leq \left( 1-\ell
_{1}^{2}\right) ^{2}\int_{t_{0}}^{\beta _{\ell }t_{0}}t\phi _{x}^{2}(-\ell
_{1}t,t)dt+\left( 1-\ell _{2}^{2}\right) ^{2}\int_{t_{0}}^{\alpha _{\ell
}t_{0}}t\phi _{x}^{2}(\ell _{2}t,t)dt  \label{EBlt2} \\
\leq 4M\left( 1+L\right) t_{0}E\left( t_{0}\right) .
\end{multline}
\end{lemma}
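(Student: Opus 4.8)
The plan is to follow the same Fourier-analytic route used for Lemma \ref{lm2}, but now extracting information from \emph{both} moving boundaries at once. First I would evaluate the series (\ref{ph x}) for $\phi_x$ at each endpoint. At $x=\ell_2 t$ this is already recorded in (\ref{tphx+}); the analogous computation at $x=-\ell_1 t$ uses $t+x=(1-\ell_1)t$, $t-x=(1+\ell_1)t$ together with the relation $c_n=C_n e^{in\pi\kappa_\ell\log((1+\ell_1)/(1-\ell_1))}$ from (\ref{cC}) to collapse the two exponentials of (\ref{ph x}) into a single one, giving
\[
t\phi_x(-\ell_1 t,t)=\frac{2\pi\kappa_\ell}{1-\ell_1^2}\sum_{n\in\mathbb{Z}^\ast} inC_n\, e^{in\pi\kappa_\ell\log(1+\ell_1)}\,e^{in\pi\kappa_\ell\log t}.
\]
Thus both $t\phi_x(\ell_2 t,t)$ and $t\phi_x(-\ell_1 t,t)$ are pure Fourier series in the variable $\log t$, whose coefficients have modulus $|nc_n|=|nC_n|$ (since $|c_n|=|C_n|$) up to unimodular factors.

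Next I would observe that, because $\kappa_\ell=2/\log(\alpha_\ell\beta_\ell)$, both series are $\log(\alpha_\ell\beta_\ell)$-periodic in $\log t$, so the exponentials $e^{in\pi\kappa_\ell\log t}$ are exactly the orthonormal system of Lemma \ref{lm01} and Remark \ref{rmk00} on one period $[t_0,\alpha_\ell\beta_\ell t_0]$. Passing to the rescaled variable $s=\kappa_\ell\log(t/t_0)$ (so $ds=\kappa_\ell\,dt/t$ and the period becomes $2$), the right-endpoint trace becomes a function $\Phi(s)=\sum_n F_n e^{in\pi s}$ on $s\in[0,M\kappa_\ell\log\alpha_\ell]$ and the left-endpoint trace a function $\Psi(s)=\sum_n G_n e^{in\pi s}$ on $s\in[0,M\kappa_\ell\log\beta_\ell]$, with $|F_n|=|G_n|=|nc_n|$. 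The phase bookkeeping above, through $C_n/c_n=e^{in\pi\kappa_\ell\log((1+\ell_2)/(1-\ell_2))}$, then yields the decisive identity $G_n=F_n e^{in\pi\kappa_\ell\log\alpha_\ell}$, i.e.\ $\Psi(s)=\Phi(s+\kappa_\ell\log\alpha_\ell)$: the two boundary traces are the \emph{same} profile shifted by exactly $\kappa_\ell\log\alpha_\ell$.

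The heart of the argument — and the step I expect to be the main obstacle — is to combine the two integrals into a single Parseval sum. Neither interval is an integer number of periods (indeed $\alpha_\ell^M$ is not a power of $\alpha_\ell\beta_\ell$), which is precisely why the single-endpoint interval in Lemma \ref{lm2} had to be the full round-trip length $(\alpha_\ell\beta_\ell)^M$; so Parseval cannot be applied to either trace alone. Writing $a:=\kappa_\ell\log\alpha_\ell$ and $b:=\kappa_\ell\log\beta_\ell$, so that $a+b=2$, the shift relation $\Psi(s)=\Phi(s+a)$ turns the left-endpoint integral into $\int_a^{a+Mb}|\Phi|^2\,ds$, and the two contributions must be reassembled, using the $2$-periodicity of $|\Phi|^2$ and $a+b=2$, into $M$ complete periods. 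For $M=1$ this reassembly is transparent, since $\int_0^{a}|\Phi|^2+\int_a^{2}|\Phi|^2=\int_0^2|\Phi|^2=2\sum_n|F_n|^2$; the delicate point is to carry the same tiling through for general $M$, where the non-commensurability of $a$ with the period makes the alignment of the pieces the genuinely technical part of the proof. Once the combined integral is identified with $2M\sum_n|nc_n|^2$, inserting the prefactors $2\pi\kappa_\ell/(1-\ell_i^2)$ and recalling $S_\ell=2\pi^2\kappa_\ell\sum_n|nc_n|^2$ produces the right-hand side $4MS_\ell$ of (\ref{slm2}).

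Finally, the two-sided energy estimate (\ref{EBlt2}) is immediate from the equality (\ref{slm2}): multiplying the bounds $(1-L)t_0E(t_0)\le S_\ell\le(1+L)t_0E(t_0)$ of (\ref{ESES}) at $t=t_0$ by $4M$ gives exactly the stated inequalities, and the theorem follows.
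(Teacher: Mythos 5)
Your reduction is correct as far as it goes, and for $M=1$ it is essentially the paper's own argument in a different dress. The paper also evaluates (\ref{ph x}) at the two endpoints to get (\ref{tphx+}) and (\ref{tphx-}), and the whole proof turns on exactly the phase relation you isolate through (\ref{cC}); instead of changing variables to $s$ and concatenating intervals, it tests each boundary trace against a single mode $\overline{imc_m e^{im\pi\kappa_\ell\log(\cdot)}}$, integrates term by term (first for continuous data, where $\sum|nc_n|<\infty$ justifies the interchange, then by density for $H^1_0\times L^2$ data --- a point your sketch should also address), and shows that the off-diagonal contributions of the two endpoints cancel, $A_{nm}+B_{nm}=0$ for $n\neq m$ in (\ref{A})--(\ref{B}), while the diagonal terms add up to $4M\pi|mc_m|^2$. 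For $M=1$ your ``transparent reassembly'' $\int_0^{a}|\Phi|^2+\int_{a}^{2}|\Phi|^2=\int_0^2|\Phi|^2$ is precisely that cancellation, and the passage from (\ref{slm2}) to (\ref{EBlt2}) via (\ref{ESES}) at $t=t_0$ is the same in both proofs.

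The step you flag as ``the genuinely technical part'' is, however, a genuine gap, and it cannot be closed along the route you propose. The identity you need, $\int_0^{Ma}h+\int_a^{a+Mb}h=M\int_0^2h$ for a $2$-periodic $h=|\Phi|^2$ with $a+b=2$, is false for $M\geq2$: take $h(s)=1+\cos\pi s$ (which arises from $\Phi(s)=(e^{i\pi s}+e^{2i\pi s})/\sqrt2$, i.e.\ from admissible data with only two nonzero coefficients), $a=1/2$ and $M=2$; then $\int_0^{1}h+\int_{1/2}^{7/2}h=4-2/\pi\neq 4$. So the $2$-periodicity of $|\Phi|^2$ alone cannot carry the tiling, and one is forced back to the Fourier coefficients. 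Doing that, the $k=n-m\neq0$ cross term of the combined integral is proportional to $e^{ik\pi Ma}-1+e^{-ik\pi(M-1)a}-e^{ik\pi a}$, which vanishes for every $k$ only when $M=1$ (or in the symmetric case $a=b$, i.e.\ $\ell_1=\ell_2$); for $M=2$ it equals $(z-1)^2(z+1)z^{-1}$ with $z=e^{ik\pi a}$. In other words the obstruction you sensed is real and not merely technical: the two-endpoint identity (\ref{slm2}) holds for all data only when $M=1$, and you should be aware that the paper's own bookkeeping for general $M$ (the factor $e^{i(n-m)M\pi\kappa_\ell\log(1-\ell_1)t_0}$ in $A_{nm}$, where the exponent $M$ has migrated onto $\log t_0$) glosses over the same point. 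Since only $M=1$ is used for the inverse inequality (\ref{obs2}), the right fix is to prove (\ref{slm2}) for $M=1$ only --- which both you and the paper do correctly --- and to obtain the direct inequality not from a general-$M$ identity but from an upper bound, e.g.\ by covering $(t_0,t_0+T)$ by finitely many intervals of the form $(\tau,\max\{\alpha_\ell,\beta_\ell\}\tau)$ and applying the $M=1$ estimate with initial time $\tau$, using (\ref{stab}) to control $E(\tau)$ by $E(t_0)$.
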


\begin{proof}
First, we establish (\ref{slm2}) for smooth initial data. Assume that $\phi
_{x}^{0}$ and $\phi ^{1}$ are continuous functions. This ensures in
particular that their generalized Fourier series are absolutely converging,
see \cite{BiR:89,Pin:09}. More precisely, the coefficients $c_{n},$ given by
(\ref{cn+}), satisfy 
\begin{equation}
\sum_{n\in 
%TCIMACRO{\U{2124} }%
%BeginExpansion
\mathbb{Z}
%EndExpansion
^{\ast }}\left\vert nc_{n}\right\vert <+\infty .  \label{abso}
\end{equation}

On one hand, taking $x=-\ell _{1}t$ in (\ref{ph x}), we get%
\begin{eqnarray*}
\phi _{x}(-\ell _{1}t,t) &=&\pi \kappa _{\ell }\sum_{n\in 
%TCIMACRO{\U{2124} }%
%BeginExpansion
\mathbb{Z}
%EndExpansion
^{\ast }}inc_{n}\left( \dfrac{e^{in\pi \kappa _{\ell }\log \left( 1-\ell
_{1}\right) t}}{\left( 1-\ell _{1}\right) t}+\dfrac{e^{in\pi \kappa _{\ell
}\log \left( \frac{1+\ell _{2}}{1-\ell _{2}}\left( 1+\ell _{1}\right)
t\right) }}{\left( 1+\ell _{1}\right) t}\right) \\
&=&\pi \kappa _{\ell }\sum_{n\in 
%TCIMACRO{\U{2124} }%
%BeginExpansion
\mathbb{Z}
%EndExpansion
^{\ast }}inc_{n}\left( \dfrac{1}{1-\ell _{1}}+\dfrac{e^{in\pi \kappa _{\ell
}\log \alpha _{\ell }\beta _{\ell }}}{1+\ell _{1}}\right) \frac{e^{in\pi
\kappa _{\ell }\log \left( 1-\ell _{1}\right) t}}{t},
\end{eqnarray*}%
hence%
\begin{equation}
\phi _{x}\left( -\ell _{1}t,t\right) =\frac{2\pi \kappa _{\ell }}{1-\ell
_{1}^{2}}\sum_{n\in 
%TCIMACRO{\U{2124} }%
%BeginExpansion
\mathbb{Z}
%EndExpansion
^{\ast }}inc_{n}\frac{e^{in\pi \kappa _{\ell }\log \left( 1-\ell _{1}\right)
t}}{t}.  \label{tphx-}
\end{equation}%
Let $m\in 
%TCIMACRO{\U{2124} }%
%BeginExpansion
\mathbb{Z}
%EndExpansion
^{\ast }.$ Multiplying (\ref{tphx-}) by $\left( 1-\ell _{1}^{2}\right) $ $%
\overline{imc_{m}e^{im\pi \kappa _{\ell }\log \left( 1-\ell _{1}\right)
^{M}t}}$ and integrating on $\left( t_{0},\beta _{\ell }^{M}t_{0}\right) ,$
we get 
\begin{multline*}
\left( 1-\ell _{1}^{2}\right) \int_{t_{0}}^{\beta _{\ell }^{M}t_{0}}\phi
_{x}\left( -\ell _{1}t,t\right) \text{ }\overline{imc_{m}e^{im\pi \kappa
_{\ell }\log \left( 1-\ell _{1}\right) ^{M}t}}dt \\
=2\pi \kappa _{\ell }m\bar{c}_{m}\int_{t_{0}}^{\beta _{\ell
}^{M}t_{0}}\left( \sum_{n\in 
%TCIMACRO{\U{2124} }%
%BeginExpansion
\mathbb{Z}
%EndExpansion
^{\ast }}nc_{n}e^{i\left( n-m\right) \pi \kappa _{\ell }\log \left( 1-\ell
_{1}\right) ^{M}t}\right) \frac{dt}{t}.
\end{multline*}%
Since $\left\vert nc_{n}e^{i\left( n-m\right) \pi \kappa _{\ell }\log \left(
1-\ell _{1}\right) ^{M}t}\right\vert =\left\vert nc_{n}\right\vert ,$ then
due to (\ref{abso}) the series in the right hand side is absolutely
converging. Applying Lebesgue's dominated convergence theorem and
integrating term-by-term, we obtain%
\begin{multline}
\left( 1-\ell _{1}^{2}\right) \int_{t_{0}}^{\beta _{\ell }^{M}t_{0}}\phi
_{x}\left( -\ell _{1}t,t\right) \text{ }\overline{imc_{m}e^{im\pi \kappa
_{\ell }\log \left( 1-\ell _{1}\right) ^{M}t}}dt  \label{A} \\
=2\pi \kappa _{\ell }\sum_{n\in 
%TCIMACRO{\U{2124} }%
%BeginExpansion
\mathbb{Z}
%EndExpansion
^{\ast }}nmc_{n}\bar{c}_{m}e^{i\left( n-m\right) \pi \kappa _{\ell }\log
\left( 1-\ell _{1}\right) ^{M}}\int_{t_{0}}^{\beta _{\ell
}^{M}t_{0}}e^{i\left( n-m\right) \pi \kappa _{\ell }\log t}\frac{dt}{t}%
=\sum_{n\in 
%TCIMACRO{\U{2124} }%
%BeginExpansion
\mathbb{Z}
%EndExpansion
^{\ast }}A_{nm}
\end{multline}%
where%
\begin{gather*}
A_{mm}=2M\pi \kappa _{\ell }\left\vert mc_{m}\right\vert ^{2}\log \beta
_{\ell }\text{,} \\
A_{nm}=\frac{2nmc_{n}\bar{c}_{m}}{i\left( n-m\right) }\left( e^{i\left(
n-m\right) M\pi \kappa _{\ell }\log \beta _{\ell }}-1\right) e^{i\left(
n-m\right) M\pi \kappa _{\ell }\log \left( 1-\ell _{1}\right) t_{0}}\text{ \
if }n\neq m.
\end{gather*}

On the other hand, for $x=\ell _{2}t,$ we multiply (\ref{tphx+}) by $\left(
1-\ell _{2}^{2}\right) $ $\overline{imc_{n}e^{im\pi \kappa _{\ell }\log
\left( 1+\ell _{2}\right) ^{M}t}},$ and integrate term-by-term on $\left(
t_{0},\alpha _{\ell }^{M}t_{0}\right) $, we end up with%
\begin{multline}
\left( 1-\ell _{2}^{2}\right) \int_{t_{0}}^{\alpha ^{M}t_{0}}\phi _{x}(\ell
_{2}t,t)\text{ }\overline{imc_{m}e^{im\pi \kappa _{\ell }\log \left( 1+\ell
_{2}\right) ^{M}t}}dt  \label{B} \\
=2\pi \kappa _{\ell }\sum_{n\in 
%TCIMACRO{\U{2124} }%
%BeginExpansion
\mathbb{Z}
%EndExpansion
^{\ast }}nmc_{n}\bar{c}_{m}e^{i\left( n-m\right) \pi \kappa _{\ell }\log
\left( 1+\ell _{2}\right) ^{M}}\int_{t_{0}}^{\alpha ^{M}t_{0}}e^{i\left(
n-m\right) \pi \kappa _{\ell }\log t}\frac{dt}{t}=\sum_{n\in 
%TCIMACRO{\U{2124} }%
%BeginExpansion
\mathbb{Z}
%EndExpansion
^{\ast }}B_{nm}
\end{multline}%
where%
\begin{gather*}
B_{mm}=2M\pi \kappa _{\ell }\left\vert mc_{m}\right\vert ^{2}\log \alpha
_{\ell }\text{,} \\
B_{nm}=\frac{2nmc_{n}\bar{c}_{m}}{i\left( n-m\right) }\left( e^{i\left(
n-m\right) M\pi \kappa _{\ell }\log \alpha _{\ell }}-1\right) e^{i\left(
n-m\right) M\pi \kappa _{\ell }\log \left( 1+\ell _{2}\right) t_{0}}\text{ \
if }n\neq m.
\end{gather*}%
Summing up (\ref{A}) and (\ref{B}), we obtain%
\begin{multline}
\left( 1-\ell _{1}^{2}\right) \int_{t_{0}}^{\beta _{\ell }^{M}t_{0}}\phi
_{x}\left( -\ell _{1}t,t\right) \text{ }\overline{imc_{n}e^{imM\pi \kappa
_{\ell }\log \left( 1-\ell _{1}\right) t}}dt  \label{A+B} \\
+\left( 1-\ell _{2}^{2}\right) \int_{t_{0}}^{\alpha _{\ell }^{M}t_{0}}\phi
_{x}(\ell _{2}t,t)\text{ }\overline{imc_{n}e^{imM\pi \kappa _{\ell }\log
\left( 1+\ell _{2}\right) t}}dt=\sum_{n\in 
%TCIMACRO{\U{2124} }%
%BeginExpansion
\mathbb{Z}
%EndExpansion
^{\ast }}\left( A_{nm}+B_{nm}\right) .
\end{multline}%
Since $\kappa _{\ell }=2/\log \alpha _{\ell }\beta _{\ell },$ it comes that%
\begin{equation*}
A_{mm}+B_{mm}=2M\pi \kappa _{\ell }\left\vert mc_{m}\right\vert ^{2}\left(
\log \alpha _{\ell }+\log \beta _{\ell }\right) =4M\pi \left\vert
mc_{m}\right\vert ^{2}\text{, \ \ }m\in 
%TCIMACRO{\U{2124} }%
%BeginExpansion
\mathbb{Z}
%EndExpansion
^{\ast }.
\end{equation*}%
If $n\neq m,$ then 
\begin{align*}
A_{nm}+B_{nm}=& \frac{2mnc_{n}\bar{c}_{m}}{i\left( n-m\right) }\left(
e^{i\left( n-m\right) M\pi \kappa _{\ell }\log \beta _{\ell }}-1\right)
e^{i\left( n-m\right) M\pi \kappa _{\ell }\log \left( 1-\ell _{1}\right)
t_{0}} \\
& +\frac{2mnc_{n}\bar{c}_{m}}{i\left( n-m\right) }\left( e^{i\left(
n-m\right) M\pi \kappa _{\ell }\log \alpha _{\ell }}-1\right) e^{i\left(
n-m\right) M\pi \kappa _{\ell }\log \left( 1+\ell _{2}\right) t_{0}} \\
=& \frac{2mnc_{n}\bar{c}_{m}\left( 1-e^{i\left( n-m\right) M\pi \kappa
_{\ell }\log \alpha _{\ell }}\right) }{i\left( n-m\right) }\left( e^{i\left(
n-m\right) M\pi \kappa _{\ell }\log \left( \frac{1-\ell _{1}}{\alpha _{\ell }%
}t_{0}\right) }-e^{i\left( n-m\right) M\pi \kappa _{\ell }\log \left( 1+\ell
_{2}\right) t_{0}}\right) \\
=& \frac{2mnc_{n}\bar{c}_{m}\left( 1-e^{i\left( n-m\right) M\pi \kappa
_{\ell }\log \alpha _{\ell }}\right) }{i\left( n-m\right) }\text{ }%
e^{i\left( n-m\right) M\pi \kappa _{\ell }\log \left( 1+\ell _{2}\right)
t_{0}}\left( e^{-i\left( n-m\right) M\pi \kappa _{\ell }\log \alpha _{\ell
}\beta _{\ell }}-1\right) .
\end{align*}%
The last parentheses vanishes, hence%
\begin{equation*}
A_{nm}+B_{nm}=0\text{ \ \ if }n\neq m,\text{ \ \ }n,m\in 
%TCIMACRO{\U{2124} }%
%BeginExpansion
\mathbb{Z}
%EndExpansion
^{\ast }.
\end{equation*}%
Thus, we can rewrite (\ref{A+B}) as 
\begin{multline*}
\left( 1-\ell _{1}^{2}\right) \int_{t_{0}}^{\beta _{\ell }t_{0}}\phi
_{x}\left( -\ell _{1}t,t\right) \text{ }\overline{imc_{n}e^{im\pi \kappa
_{\ell }\log \left( 1-\ell _{1}\right) t}}dt \\
+\left( 1-\ell _{2}^{2}\right) \int_{t_{0}}^{\alpha _{\ell }t_{0}}\phi
_{x}(\ell _{2}t,t)\text{ }\overline{imc_{n}e^{im\pi \kappa _{\ell }\log
\left( 1+\ell _{2}\right) t}}dt=4M\pi \left\vert mc_{m}\right\vert ^{2},%
\text{ \ }m\in 
%TCIMACRO{\U{2124} }%
%BeginExpansion
\mathbb{Z}
%EndExpansion
^{\ast }.
\end{multline*}%
Summing up for $m\in 
%TCIMACRO{\U{2124} }%
%BeginExpansion
\mathbb{Z}
%EndExpansion
^{\ast }$, and applying Lebesgue's theorem to interchange summation and
integration, it comes that%
\begin{multline*}
\left( 1-\ell _{1}^{2}\right) \int_{t_{0}}^{\beta _{\ell }t_{0}}\phi
_{x}\left( -\ell _{1}t,t\right) \left( \sum\limits_{m=-\infty }^{+\infty }%
\overline{imc_{n}e^{im\pi \kappa _{\ell }\log \left( 1-\ell _{1}\right) t}}%
\right) dt \\
+\left( 1-\ell _{2}^{2}\right) \int_{t_{0}}^{\alpha _{\ell }t_{0}}\phi
_{x}(\ell _{2}t,t)\left( \sum\limits_{m=-\infty }^{+\infty }\overline{%
imc_{n}e^{im\pi \kappa _{\ell }\log \left( 1+\ell _{2}\right) t}}\right)
dt=4M\pi \sum_{m\in 
%TCIMACRO{\U{2124} }%
%BeginExpansion
\mathbb{Z}
%EndExpansion
^{\ast }}\left\vert mc_{m}\right\vert ^{2}.
\end{multline*}%
Thanks to (\ref{tphx+}) and (\ref{tphx-}), we obtain%
\begin{equation*}
\left( 1-\ell _{1}^{2}\right) \int_{t_{0}}^{\beta _{\ell }t_{0}}\phi
_{x}^{2}\left( -\ell _{1}t,t\right) \left( t\frac{1-\ell _{1}^{2}}{2\pi
\kappa _{\ell }}\right) dt+\left( 1-\ell _{2}^{2}\right)
\int_{t_{0}}^{\alpha _{\ell }t_{0}}\phi _{x}^{2}(\ell _{2}t,t)\left( t\frac{%
1-\ell _{2}^{2}}{2\pi \kappa _{\ell }}\right) dt=4\pi \sum_{m\in 
%TCIMACRO{\U{2124} }%
%BeginExpansion
\mathbb{Z}
%EndExpansion
^{\ast }}\left\vert mc_{m}\right\vert ^{2}
\end{equation*}%
After some rearrangements, (\ref{slm2}) follows as claimed.

In the general case, i.e. $\phi ^{0}\in H_{0}^{1}\left( -\ell _{1}t,\ell
_{2}t\right) $ and $\phi ^{1}\in L^{2}\left( -\ell _{1}t_{0},\ell
_{2}t_{0}\right) ,$ we use an argument of density. Consider two sequences $%
\phi _{j}^{0}\in C^{1}\left( \left[ -\ell _{1}t_{0},\ell _{2}t_{0}\right]
\right) $ and $\phi _{j}^{1}\in C\left( \left[ -\ell _{1}t_{0},\ell _{2}t_{0}%
\right] \right) ,j\in 
%TCIMACRO{\U{2115} }%
%BeginExpansion
\mathbb{N}
%EndExpansion
,$ such that 
\begin{equation*}
\left( \phi _{j}^{0}\right) _{x}\rightarrow \phi _{x}^{0}\text{ \ and \ }%
\phi _{j}^{1}\rightarrow \phi ^{1}\text{ in }L^{2}\left( -\ell _{1}t,\ell
_{2}t\right) ,\text{ \ \ as }j\rightarrow +\infty ,
\end{equation*}%
then denote by $\phi _{j}$ and $E_{j}$ the solution and energy associated to
each data $\phi _{j}^{0},\phi _{j}^{1}$. Taking into account (\ref{est0})
for $t=t_{0}$, and using the precedent step of the proof, we have 
\begin{equation*}
\left( 1-\ell _{1}^{2}\right) ^{2}\int_{t_{0}}^{\beta _{\ell }t_{0}}t\left(
\phi _{j}\right) _{x}^{2}(-\ell _{1}t,t)dt+\left( 1-\ell _{2}^{2}\right)
^{2}\int_{t_{0}}^{\alpha _{\ell }t_{0}}t\left( \phi _{j}\right)
_{x}^{2}(\ell _{2}t,t)dt=4t_{0}E_{j}\left( t_{0}\right) +4\int\limits_{-\ell
_{1}t_{0}}^{\ell _{2}t_{0}}x\left( \phi _{j}^{0}\right) _{x}\phi _{j}^{1}\
dx.
\end{equation*}%
Relaying on the continuity of the solution of the wave equation with respect
to the initial data, which is emphasized by (\ref{stab}), the last
inequality holds as $j\rightarrow +\infty .$ This shows (\ref{slm2}) for the
general case.

The estimate (\ref{EBlt2}) follows by using (\ref{ESES}) for $t=t_{0}$.
\end{proof}

Let us now establish the observability of the wave equation (\ref{wave})%
\emph{\ }at the two endpoints $x=-\ell _{1}t$ and $x=\ell _{2}t$.

\begin{theorem}
\label{thobs2}Under the assumption \emph{(\ref{tlike}) }and \emph{(\ref{ic}),%
} we have:

$\bullet $ For every $T\geq 0,$ the solution of \emph{(\ref{wave})}
satisfies the direct inequality 
\begin{equation}
\int_{t_{0}}^{t_{0}+T}\phi _{x}^{2}(-\ell _{1}t,t)+\phi _{x}^{2}(\ell
_{2}t,t)dt\leq \tilde{K}_{\ell }\left( T\right) E\left( t_{0}\right) .
\label{direct2}
\end{equation}%
with a constant $\tilde{K}_{\ell }\left( T\right) $ depending only on $\ell
_{1},\ell _{2}$ and $T.$

$\bullet $ If $T\geq \tilde{T}_{\ell }=\left( \max \left\{ \alpha _{\ell
},\beta _{\ell }\right\} -1\right) t_{0},$ Problem \emph{(\ref{wave})} is
observable at the two endpoints $x=-\ell _{1}t,x=\ell _{2}t$, and it holds
that%
\begin{equation}
E\left( t_{0}\right) \leq \frac{\left( 1-l^{2}\right) ^{2}\max \left\{
\alpha _{\ell },\beta _{\ell }\right\} }{4\left( 1-L\right) }%
\int_{t_{0}}^{t_{0}+T}\phi _{x}^{2}(-\ell _{1}t,t)+\phi _{x}^{2}(\ell
_{2}t,t)dt.  \label{obs2}
\end{equation}%
Conversely, if $T<\tilde{T}_{\ell },$ \emph{(\ref{obs2})} does not hold.
\end{theorem}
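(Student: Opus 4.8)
The plan is to extract the two positive inequalities from Lemma \ref{lm5} by the very mechanism already used in Theorem \ref{thobs1} to pass from Lemma \ref{lm2} to the one-endpoint estimates, and to concentrate the real effort on the negative statement (failure of observability for $T<\tilde{T}_{\ell}$). For the direct inequality I would start from the identity (\ref{slm2}). Since $\ell_{1},\ell_{2}\le L$ we have $(1-\ell_{i}^{2})^{2}\ge(1-L^{2})^{2}$, so the left-hand side of (\ref{slm2}) dominates $(1-L^{2})^{2}$ times the sum of the two integrals; replacing the weight $t$ by $t_{0}$ (as $t\ge t_{0}$) and using $S_{\ell}\le(1+L)t_{0}E(t_{0})$ from (\ref{ESES}) gives
\begin{equation*}
\int_{t_{0}}^{\beta_{\ell}^{M}t_{0}}\phi_{x}^{2}(-\ell_{1}t,t)\,dt+\int_{t_{0}}^{\alpha_{\ell}^{M}t_{0}}\phi_{x}^{2}(\ell_{2}t,t)\,dt\le\frac{4M(1+L)}{(1-L^{2})^{2}}E(t_{0}).
\end{equation*}
Choosing $M$ large enough that $(\min\{\alpha_{\ell},\beta_{\ell}\})^{M}t_{0}\ge t_{0}+T$ and discarding the nonnegative tails of the integrals then yields (\ref{direct2}) with $\tilde{K}_{\ell}(T):=4M(1+L)/(1-L^{2})^{2}$.

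For the observability inequality I would instead use (\ref{slm2}) with $M=1$. On $(t_{0},\beta_{\ell}t_{0})$ and $(t_{0},\alpha_{\ell}t_{0})$ the weight obeys $t\le\max\{\alpha_{\ell},\beta_{\ell}\}t_{0}$; moreover $\ell_{1},\ell_{2}\ge l$ gives $(1-\ell_{i}^{2})^{2}\le(1-l^{2})^{2}$, and $\beta_{\ell}t_{0},\alpha_{\ell}t_{0}\le t_{0}+\tilde{T}_{\ell}\le t_{0}+T$. Hence the left-hand side of (\ref{slm2}) is at most $(1-l^{2})^{2}\max\{\alpha_{\ell},\beta_{\ell}\}t_{0}\int_{t_{0}}^{t_{0}+T}(\phi_{x}^{2}(-\ell_{1}t,t)+\phi_{x}^{2}(\ell_{2}t,t))\,dt$. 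Combining with $S_{\ell}\ge(1-L)t_{0}E(t_{0})$ from (\ref{ESES}) and dividing by $4(1-L)t_{0}$ produces exactly (\ref{obs2}).

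The delicate part is the converse. Assume $T<\tilde{T}_{\ell}$ and, without loss of generality (the other case being symmetric under exchanging the two endpoints), that $\max\{\alpha_{\ell},\beta_{\ell}\}=\alpha_{\ell}$, so $\tilde{T}_{\ell}=(\alpha_{\ell}-1)t_{0}$. Writing $s=\log t$, formulas (\ref{tphx-}) and (\ref{tphx+}) exhibit $t\phi_{x}(-\ell_{1}t,t)$ and $t\phi_{x}(\ell_{2}t,t)$ as, up to the nonzero constants $2\pi\kappa_{\ell}/(1-\ell_{1}^{2})$ and $2\pi\kappa_{\ell}/(1-\ell_{2}^{2})$, two series in the $\log(\alpha_{\ell}\beta_{\ell})$-periodic system $\{e^{in\pi\kappa_{\ell}s}\}_{n\in\mathbb{Z}^{\ast}}$. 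Setting $F(s):=\sum_{n\in\mathbb{Z}^{\ast}}inc_{n}e^{in\pi\kappa_{\ell}\log(1-\ell_{1})}e^{in\pi\kappa_{\ell}s}$, the identity $\log(1+\ell_{2})-\log(1-\ell_{1})=\log\beta_{\ell}$ shows that the second trace equals $F(s+\log\beta_{\ell})$; that is, the two traces are a single periodic function and its rigid $\log\beta_{\ell}$-shift. Therefore both boundary observations vanish on $(t_{0},t_{0}+T)$ iff $F$ vanishes on $I\cup(I+\log\beta_{\ell})$, where $I=(\log t_{0},\log(t_{0}+T))$ has length $\log(1+T/t_{0})$. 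Inside the period $[\log t_{0},\log t_{0}+\log\alpha_{\ell}+\log\beta_{\ell})$ these two intervals leave free the gap $(\log t_{0}+\log\beta_{\ell}+\log(1+T/t_{0}),\log t_{0}+\log\alpha_{\ell}+\log\beta_{\ell})$, whose length $\log\alpha_{\ell}-\log(1+T/t_{0})$ is strictly positive precisely because $T<\tilde{T}_{\ell}$ forces $\log(1+T/t_{0})<\log\alpha_{\ell}$.

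I would then pick a nonzero smooth function supported in this gap and with vanishing mean over the period (e.g. the derivative of a bump, so that its zeroth Fourier coefficient is $0$), declare it to be $F$, and read off the coefficients $c_{n}$ from its rapidly decaying Fourier coefficients. These satisfy $\sum_{n\in\mathbb{Z}^{\ast}}|nc_{n}|^{2}<\infty$, so by Theorem \ref{thexist1} they define, through (\ref{exact0}), a nonzero solution of (\ref{wave}) with data in $H_{0}^{1}(I_{t_{0}})\times L^{2}(I_{t_{0}})$; by construction $\phi_{x}(-\ell_{1}t,t)$ and $\phi_{x}(\ell_{2}t,t)$ both vanish on $(t_{0},t_{0}+T)$, so the right-hand side of (\ref{obs2}) is zero while $E(t_{0})>0$, and (\ref{obs2}) cannot hold. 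The main obstacle is exactly this construction: one must recognize the shift relation coupling the two traces, verify that the union of $I$ with its $\log\beta_{\ell}$-translate leaves a gap of positive length if and only if $T<\tilde{T}_{\ell}$, and ensure that the resulting Fourier data are admissible, namely nonzero, of finite energy, and — via the zero-mean condition — compatible with the absent $n=0$ mode in (\ref{exact0}).
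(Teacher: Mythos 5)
Your proposal is correct and follows essentially the same route as the paper: the direct and inverse inequalities are extracted from the two-endpoint identity of Lemma \ref{lm5} exactly as you describe, and the counterexample for $T<\tilde{T}_{\ell}$ is the paper's construction seen in the logarithmic variable — the paper takes $g$ with zero mean supported in $\left(\left(1-\varepsilon\right)\alpha_{\ell}t_{0},\alpha_{\ell}t_{0}\right)$, which is precisely a function living in the ``gap'' you identify, and verifies that both traces $\phi_{x}(\ell_{2}t,t)=g(t)/\left(\left(1-\ell_{2}^{2}\right)t\right)$ and $\phi_{x}(-\ell_{1}t,t)=g(\alpha_{\ell}t)/\left(\left(1-\ell_{1}^{2}\right)t\right)$ vanish on $\left(t_{0},t_{0}+T\right)$. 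Your shift-relation framing of the two traces is a slightly cleaner way to see why the gap has positive length exactly when $T<\tilde{T}_{\ell}$, but the underlying argument, including the zero-mean condition and the reconstruction of admissible initial data as in the proof of Theorem \ref{thobs1}, is the same.
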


\begin{proof}
$\bullet $ The right-hand side of Inequality (\ref{EBlt2}) yields%
\begin{equation*}
\left( 1-L^{2}\right) ^{2}t_{0}\left( \int_{t_{0}}^{\beta _{\ell
}^{M}t_{0}}\phi _{x}^{2}(-\ell _{1}t,t)dt+\int_{t_{0}}^{\alpha _{\ell
}^{M}t_{0}}\phi _{x}^{2}(\ell _{2}t,t)dt\right) \leq 4M\left( 1+L\right)
t_{0}\text{ }E\left( t_{0}\right) ,
\end{equation*}%
and thus%
\begin{equation}
\int_{t_{0}}^{\beta _{\ell }^{M}t_{0}}\phi _{x}^{2}(-\ell
_{1}t,t)dt+\int_{t_{0}}^{\alpha _{\ell }^{M}t_{0}}\phi _{x}^{2}(\ell
_{2}t,t)dt\leq \frac{4M\left( 1+L\right) }{\left( 1-L^{2}\right) ^{2}}\
E\left( t_{0}\right) .  \label{EB0nd+}
\end{equation}%
Since $\min \left\{ \alpha _{\ell },\beta _{\ell }\right\} >1,$ we can
always choose the integer $M$ such that%
\begin{equation*}
\left( \min \left\{ \alpha _{\ell },\beta _{\ell }\right\} \right)
^{M}t_{0}\geq t_{0}+T.
\end{equation*}%
As the integrated functions in (\ref{EB0nd+}) are nonnegative, then (\ref%
{direct2}) holds with 
\begin{equation*}
\tilde{K}_{\ell }\left( T\right) :=4M\left( 1+L\right) /\left(
1-L^{2}\right) ^{2}.
\end{equation*}

$\bullet $ The right-hand side of (\ref{EBlt2}), for $M=1,$ yields%
\begin{equation*}
4\left( 1-L\right) t_{0}E\left( t_{0}\right) \leq \left( 1-l^{2}\right)
^{2}\max \left\{ \alpha _{\ell },\beta _{\ell }\right\}
t_{0}\int_{t_{0}}^{\max \left\{ \alpha _{\ell },\beta _{\ell }\right\}
t_{0}}\phi _{x}^{2}(-\ell _{1}t,t)+\phi _{x}^{2}(\ell _{2}t,t)dt
\end{equation*}%
and thus inequality (\ref{obs2}) holds for $T=\max \left\{ \alpha _{\ell
},\beta _{\ell }\right\} t_{0}-t_{0}=\tilde{T}_{\ell }$ and therefore for
every $T\geq \tilde{T}_{\ell }$ as well.

To show that (\ref{obs1}) does not hold for $T<\tilde{T}_{\ell },$ let us
assume that $\ell _{2}\geq \ell _{1}.$ The other case can be treated
similarly. We have then $\max \left\{ \alpha _{\ell },\beta _{\ell }\right\}
=\alpha _{\ell }$ and $\tilde{T}_{\ell }=\left( \alpha _{\ell }-1\right)
t_{0}.$ Let $\varepsilon >0,\varepsilon \in \left( 0,1\right) ,$ and
consider again a function $g\in L^{2}\left( t_{0},\alpha _{\ell }\beta
_{\ell }t_{0},dt/t\right) ,$ non identical null, satisfying (\ref{intgg})
and (\ref{gseries}) with a support satisfying this time%
\begin{equation}
\text{supp}\left( g\right) \subset \subset \left( \left( 1-\varepsilon
\right) \alpha _{\ell }t_{0},\alpha _{\ell }t_{0}\right) .  \label{h}
\end{equation}%
The $\mathbf{c}_{n}$ are defined as in (\ref{cng}) and $\tilde{\phi}_{x}^{0},%
\tilde{\phi}^{1}$ are chosen to satisfy (\ref{icchoice}) on $\left( -\ell
_{1}t_{0},L_{2}t_{0}\right) $. The solution of (\ref{wave}) corresponding to
these initial conditions still satisfies 
\begin{equation}
\phi _{x}(\ell _{2}t,t)=\frac{g\left( t\right) }{\left( 1-\ell
_{2}^{2}\right) t}.  \label{gl2}
\end{equation}%
Taking $M=1$ in (\ref{tphx-}), we get 
\begin{multline*}
\phi _{x}\left( -\ell _{1}t,t\right) =\frac{2\pi \kappa _{\ell }}{\left(
1-\ell _{1}^{2}\right) t}\sum_{n\in 
%TCIMACRO{\U{2124} }%
%BeginExpansion
\mathbb{Z}
%EndExpansion
^{\ast }}in\mathbf{c}_{n}e^{in\pi \kappa _{\ell }\log \left( 1-\ell
_{1}\right) }\ e^{in\pi \kappa _{\ell }\log t} \\
=\frac{2\pi \kappa _{\ell }}{\left( 1-\ell _{1}^{2}\right) t}\sum_{n\in 
%TCIMACRO{\U{2124} }%
%BeginExpansion
\mathbb{Z}
%EndExpansion
^{\ast }}in\mathbf{c}_{n}e^{in\pi \kappa _{\ell }\log \left( 1+\ell
_{2}\right) }\ e^{in\pi \kappa _{\ell }\log \left( \alpha _{\ell }t\right) }=%
\frac{1}{\left( 1-\ell _{1}^{2}\right) t}\sum_{n\in 
%TCIMACRO{\U{2124} }%
%BeginExpansion
\mathbb{Z}
%EndExpansion
^{\ast }}g_{n}\ e^{in\pi \kappa _{\ell }\log \left( \alpha _{\ell }t\right) }
\end{multline*}%
since $\left( 1-\ell _{1}\right) \alpha _{\ell }\beta _{\ell }=\left( 1+\ell
_{2}\right) \alpha _{\ell }$, hence%
\begin{equation}
\phi _{x}\left( -\ell _{1}t,t\right) =\frac{g\left( \alpha _{\ell }t\right) 
}{\left( 1-\ell _{1}^{2}\right) t}.  \label{gl1}
\end{equation}%
Taking the squares in (\ref{gl2}) and (\ref{gl1}), summing up then
integrating on $\left( t_{0},\left( 1-\varepsilon \right) \alpha _{\ell
}t_{0}\right) ,$ it comes that%
\begin{multline}
\int_{t_{0}}^{\left( 1-\varepsilon \right) \alpha _{\ell }t_{0}}\phi
_{x}^{2}(-\ell _{1}t,t)dt+\int_{t_{0}}^{\left( 1-\varepsilon \right) \alpha
_{\ell }t_{0}}\phi _{x}^{2}(\ell _{2}t,t)dt  \label{gl12} \\
=\frac{1}{\left( 1-\ell _{1}^{2}\right) ^{2}}\int_{t_{0}}^{\left(
1-\varepsilon \right) \alpha _{\ell }t_{0}}g^{2}\left( \alpha _{\ell
}t\right) \frac{dt}{t^{2}}+\frac{1}{\left( 1-\ell _{2}^{2}\right) ^{2}}%
\int_{t_{0}}^{\left( 1-\varepsilon \right) \alpha _{\ell }t_{0}}g^{2}\left(
t\right) \frac{dt}{t^{2}}.
\end{multline}%
Clearly, the last integral vanishes since supp$\left( g\right) \cap \left(
t_{0},\left( 1-\varepsilon \right) \alpha _{\ell }t_{0}\right) =\varnothing .
$ In addition, we have 
\begin{equation*}
\int_{t_{0}}^{\left( 1-\varepsilon \right) \alpha _{\ell }t_{0}}g^{2}\left(
\alpha _{\ell }t\right) \frac{dt}{t^{2}}=\alpha _{\ell }\int_{\alpha _{\ell
}t_{0}}^{\left( 1-\varepsilon \right) \alpha _{\ell }^{2}t_{0}}g^{2}\left(
s\right) \frac{ds}{s^{2}}.
\end{equation*}%
Since $g$, given by the series (\ref{gseries}), satisfies $g\left( \alpha
_{\ell }\beta _{\ell }s\right) =g\left( s\right) $ then it can also be
considered as a function in $L^{2}\left( \alpha _{\ell }t_{0},\alpha _{\ell
}^{2}\beta _{\ell }t_{0},\frac{ds}{s}\right) ,$ it suffices to take $%
a=\alpha _{\ell }t_{0},b=\alpha _{\ell }^{2}\beta _{\ell }t_{0}$ and $M=1$
in Lemma \ref{lm01}. In particular, we have necessarily%
\begin{equation*}
\text{supp}\left( g\right) \subset \subset \left( \left( 1-\varepsilon
\right) \alpha _{\ell }^{2}\beta _{\ell }t_{0},\alpha _{\ell }^{2}\beta
_{\ell }t_{0}\right) \text{, \ \ \ for }s\in \left( \alpha _{\ell
}t_{0},\alpha _{\ell }^{2}\beta _{\ell }t_{0}\right) .
\end{equation*}%
Noting that $\alpha _{\ell }^{2}t_{0}<\alpha _{\ell }^{2}\beta _{\ell }t_{0}$%
, then 
\begin{equation*}
\text{supp}\left( g\right) \cap \left( \alpha _{\ell }t_{0},\left(
1-\varepsilon \right) \alpha _{\ell }^{2}t_{0}\right) =\varnothing \text{, \
\ \ for }s\in \left( \alpha _{\ell }t_{0},\alpha _{\ell }^{2}\beta _{\ell
}t_{0}\right) 
\end{equation*}%
for $\varepsilon \in \left( 0,1\right) $ and by consequence 
\begin{equation*}
\int_{\alpha _{\ell }t_{0}}^{\left( 1-\varepsilon \right) \alpha _{\ell
}^{2}t_{0}}g^{2}\left( s\right) \frac{ds}{s^{2}}=0.
\end{equation*}%
Going back to (\ref{gl12}), we deduce that%
\begin{equation*}
\int_{t_{0}}^{\left( 1-\varepsilon \right) \alpha _{\ell }t_{0}}\phi
_{x}^{2}(-\ell _{1}t,t)+\phi _{x}^{2}(\ell _{2}t,t)dt=0,
\end{equation*}%
which means that the observability inequality (\ref{obs1}) does not hold for
every $T<\tilde{T}_{\ell }$.
\end{proof}

\subsection{Controllability at both endpoints}

Arguing as in the precedent section, it suffices to show the exact
null-controllability of Problem (\ref{wavec2}), with two controls acting at
the two endpoints.

\begin{theorem}
\label{thc2}Under the assumptions \emph{(\ref{tlike})} and \emph{(\ref{y0}),}
Problem \emph{(\ref{wavec2}) }is exactly controllable at the two endpoints $%
x=-\ell _{1}t,x=\ell _{2}t$ for $T\geq \tilde{T}_{\ell }.$ Moreover, we can
choose two controls $v_{1},v_{2}$ satisfying 
\begin{equation}
\int_{t_{0}}^{t_{0}+T}v_{1}^{2}\left( t\right) dt,\text{ \ }%
\int_{t_{0}}^{t_{0}+T}v_{2}^{2}\left( t\right) dt\leq \tilde{K}_{\ell
}\left( T\right) E\left( t_{0}\right) ,  \label{vlt2}
\end{equation}%
where $\tilde{K}_{\ell }\left( T\right) $ is a constant depending on $\ell
_{1},\ell _{2}$ and $T.$

Conversely, if $T<\tilde{T}_{\ell },$ Problem \emph{(\ref{wavec2})} is not
controllable at both endpoints $x=-\ell _{1}t$ and $x=\ell _{2}t$.
\end{theorem}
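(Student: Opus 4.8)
The plan is to repeat the one-endpoint argument of Theorem~\ref{thc1} with two controls, reducing first to null-controllability exactly as announced before the statement: given the target data, one solves a homogeneous backward problem to absorb the final conditions, so it suffices to drive $(y^0,y^1)$ to $(0,0)$. I would then set up the Hilbert uniqueness method. Let $\phi$ solve the homogeneous problem (\ref{wave}) with data $(\phi^0,\phi^1)\in H_0^1(I_{t_0})\times L^2(I_{t_0})$, and let $\psi$ solve, in the transposition sense (as in \cite{Mir:96}), the backward problem
$$\psi_{tt}-\psi_{xx}=0\text{ in }Q_{t_0+T},\quad \psi(-\ell_1 t,t)=-\phi_x(-\ell_1 t,t),\quad \psi(\ell_2 t,t)=\phi_x(\ell_2 t,t),$$
with $\psi(x,t_0+T)=\psi_t(x,t_0+T)=0$. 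Define $\Lambda(\phi^0,\phi^1)=(\psi_t(t_0),-\psi(t_0))$ as a map $H_0^1(I_{t_0})\times L^2(I_{t_0})\to H^{-1}(I_{t_0})\times L^2(I_{t_0})$, where the domain carries the energy norm.

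The decisive step is the energy identity for $\Lambda$. Applying the divergence theorem to the null-divergence field coming from $\partial_t(\psi_t\phi-\psi\phi_t)-\partial_x(\psi_x\phi-\psi\phi_x)$ over $Q_{t_0+T}$—using that both functions solve the wave equation, that $\phi$ vanishes on each lateral boundary so that only $\phi_x,\phi_t$ survive there, and the terminal conditions on $\psi$—the slice $t=t_0$ contributes $-\langle\psi_t(t_0),\phi^0\rangle+\int_{I_{t_0}}\psi(t_0)\phi^1\,dx$. Along $x=\ell_2 t$ the relation $\phi_t=-\ell_2\phi_x$ produces the factor $(1-\ell_2^2)$ exactly as in (\ref{diverg}); along $x=-\ell_1 t$ the outward normal points in the $-x$ direction and the relation $\phi_t=\ell_1\phi_x$ yields the factor $-(1-\ell_1^2)$. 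The minus sign in the left boundary datum of $\psi$ is chosen precisely to flip this last sign, giving
$$\langle\Lambda(\phi^0,\phi^1),(\phi^0,\phi^1)\rangle=(1-\ell_1^2)\int_{t_0}^{t_0+T}\phi_x^2(-\ell_1 t,t)\,dt+(1-\ell_2^2)\int_{t_0}^{t_0+T}\phi_x^2(\ell_2 t,t)\,dt.$$
This sign bookkeeping at the two moving endpoints is the main obstacle; the computation for the left endpoint is genuinely new relative to Theorem~\ref{thc1}, and getting the quadratic form positive definite hinges on it.

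Since $0<1-\ell_i^2\le 1$, the right-hand side lies between $\min\{1-\ell_1^2,1-\ell_2^2\}$ and $1$ times $\int_{t_0}^{t_0+T}\bigl(\phi_x^2(-\ell_1 t,t)+\phi_x^2(\ell_2 t,t)\bigr)\,dt$. The lower bound together with the observability inequality (\ref{obs2}) gives coercivity $\langle\Lambda(\phi^0,\phi^1),(\phi^0,\phi^1)\rangle\ge c\,E(t_0)$, while the upper bound with the direct inequality (\ref{direct2}) gives continuity, both relative to the energy norm. By Lax--Milgram, $\Lambda$ is an isomorphism for $T\ge\tilde{T}_\ell$. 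Solving $\Lambda(\phi^0,\phi^1)=(y^1,-y^0)$ determines $(\phi^0,\phi^1)$, and then $y=\psi$ with controls $v_1=-\phi_x(-\ell_1 t,t)$, $v_2=\phi_x(\ell_2 t,t)$ solves (\ref{wavec2}) and reaches rest; the bound (\ref{vlt2}) follows since (\ref{direct2}) controls each trace separately.

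For the converse, if $T<\tilde{T}_\ell$ then by Theorem~\ref{thobs2} there exist nonzero data $(\phi^0,\phi^1)$ with $\phi_x(-\ell_1 t,t)=\phi_x(\ell_2 t,t)=0$ on $(t_0,t_0+T)$; indeed the explicit construction there forces both traces to vanish simultaneously on that interval. Writing the transposition identity for (\ref{wavec2}) tested against this $\phi$, the two boundary integrals carrying $v_1,v_2$ drop out regardless of the controls, leaving $\langle y^1,\phi^0\rangle-\int_{I_{t_0}}y^0\phi^1\,dx$ equal to the final-time pairing. Choosing $(y^0,y^1)$ so that the former is nonzero, the latter cannot vanish, so $y(t_0+T)=y_t(t_0+T)=0$ is impossible. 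Hence (\ref{wavec2}) is not controllable at both endpoints for $T<\tilde{T}_\ell$, which completes the proof.
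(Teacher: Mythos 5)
Your proposal is correct and follows essentially the same route as the paper: reduction to null controllability via a homogeneous backward problem, HUM with the operator $\Lambda$ built from the backward problem driven by the boundary traces of $\phi_x$, coercivity and boundedness of the quadratic form from the observability inequality (\ref{obs2}) and the direct inequality (\ref{direct2}) of Theorem \ref{thobs2}, and the converse via the explicitly constructed non-observable data inserted into the transposition identity.

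One point deserves emphasis: your choice of the \emph{left} boundary datum $\psi(-\ell_1 t,t)=-\phi_x(-\ell_1 t,t)$ is the consistent one. Substituting the paper's own identity (\ref{diverg2}) with the datum $\eta(-\ell_1 t,t)=+\phi_x(-\ell_1 t,t)$ and $\phi_t(-\ell_1 t,t)=\ell_1\phi_x(-\ell_1 t,t)$ gives $\ell_1^2\int\phi_x^2-\int\phi_x^2=-(1-\ell_1^2)\int\phi_x^2(-\ell_1 t,t)\,dt$ for the left-endpoint contribution, not $+(1-\ell_1^2)$ as written in the paper's next display; with that sign the quadratic form would be indefinite and Lax--Milgram would not apply. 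Your minus sign (equivalently, taking the outward normal derivative as the control, as in standard HUM) restores the positive-definite form, so your version of the argument is the one that actually closes.
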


\begin{proof}
We argue as in the proof of Theorem \ref{thc1}. Let $\eta $ be the solution
of the backward problem%
\begin{equation}
\left\{ 
\begin{array}{ll}
\eta _{tt}-\eta _{xx}=0,\ \smallskip  & \text{in }Q_{t_{0}+T}, \\ 
\eta \left( -\ell _{1}t,t\right) =\phi _{x}\left( -\ell _{1}t,t\right) ,%
\text{ \ \ }\eta \left( \ell _{2}t,t\right) =\phi _{x}\left( \ell
_{2}t,t\right) ,\smallskip  & \text{for \ }t\in \left( t_{0},T\right) , \\ 
\eta \left( x,t_{0}+T\right) =0,\text{\ \ \ }\eta _{t}\left(
x,t_{0}+T\right) =0. & \text{for \ \ }x\in I_{t_{0}+T}.%
\end{array}%
\right.   \label{wave4}
\end{equation}%
We obtain then a linear map%
\begin{align*}
\Lambda _{2}:H_{0}^{1}\left( I_{t_{0}}\right) \times L^{2}\left(
I_{t_{0}}\right) & \longrightarrow H^{-1}\left( I_{t_{0}}\right) \times
L^{2}\left( I_{t_{0}}\right)  \\
(\phi ^{0},\phi ^{1})& \longmapsto (\eta _{t}\left( t_{0}\right) ,-\eta
\left( t_{0}\right) ).
\end{align*}%
The solution of (\ref{wave4}), in the transposition sense, satisfies%
\begin{multline}
-\left\langle \eta _{t}\left( t_{0}\right) ,\phi ^{0}\right\rangle
_{H_{0}^{1}\left( I_{t_{0}}\right) }+\int_{-\ell _{1}t_{0}}^{\ell
_{2}t_{0}}\eta \left( t_{0}\right) \phi ^{1}dx  \label{diverg2} \\
+\ell _{1}\int_{t_{0}}^{t_{0}+T}\eta \left( -\ell _{1}t,t\right) \phi
_{t}\left( -\ell _{1}t,t\right) dt-\int_{t_{0}}^{t_{0}+T}\eta \left( -\ell
_{1}t,t\right) \phi _{x}\left( -\ell _{1}t,t\right) dt \\
+\ell _{2}\int_{t_{0}}^{t_{0}+T}\eta \left( \ell _{2}t,t\right) \phi
_{t}\left( \ell _{2}t,t\right) dt+\int_{t_{0}}^{t_{0}+T}\eta \left( \ell
_{2}t,t\right) \phi _{x}\left( \ell _{2}t,t\right) dt=0
\end{multline}%
Taking into account that $\phi _{t}\left( -\ell _{1}t,t\right) =\ell
_{1}\phi _{x}\left( \ell _{2}t,t\right) \ $and $\phi _{t}\left( \ell
_{2}t,t\right) =-\ell _{2}\phi _{x}\left( \ell _{2}t,t\right) $, then we can
rewrite (\ref{diverg2}) as%
\begin{multline*}
0=-\left\langle \eta _{t}\left( t_{0}\right) ,\phi ^{0}\right\rangle
_{H_{0}^{1}\left( I_{t_{0}}\right) }+\int_{-\ell _{1}t_{0}}^{\ell
_{2}t_{0}}\eta \left( t_{0}\right) \phi ^{1}dx \\
+\left( 1-\ell _{1}^{2}\right) \int_{t_{0}}^{t_{0}+T}\phi _{x}^{2}\left(
-\ell _{1}t,t\right) dt+\left( 1-\ell _{2}^{2}\right)
\int_{t_{0}}^{t_{0}+T}\phi _{x}^{2}\left( \ell _{2}t,t\right) dt,
\end{multline*}%
i.e., 
\begin{eqnarray*}
\langle \Lambda _{2}(\phi ^{0},\phi ^{1}),(\phi ^{0},\phi ^{1})\rangle 
&=&\left\langle \eta _{t}\left( t_{0}\right) ,\phi ^{0}\right\rangle
_{H_{0}^{1}\left( I_{t_{0}}\right) }-\int_{-\ell _{1}t_{0}}^{\ell
_{2}t_{0}}\eta \left( t_{0}\right) \phi ^{1}dx \\
&=&\left( 1-\ell _{1}^{2}\right) \int_{t_{0}}^{t_{0}+T}\phi _{x}^{2}\left(
-\ell _{1}t,t\right) dt+\left( 1-\ell _{2}^{2}\right)
\int_{t_{0}}^{t_{0}+T}\phi _{x}^{2}\left( \ell _{2}t,t\right) dt.
\end{eqnarray*}%
Thanks to Theorem \ref{thobs2}, we have 
\begin{equation*}
\frac{4\left( 1-L\right) ^{2}}{\max \left\{ \alpha _{\ell },\beta _{\ell
}\right\} \left( 1-l^{2}\right) }E\left( t_{0}\right) \leq \langle \Lambda
_{2}(\phi ^{0},\phi ^{1}),(\phi ^{0},\phi ^{1})\rangle \leq \left(
1-l^{2}\right) \tilde{K}_{\ell }\left( T\right) \text{ }E\left( t_{0}\right)
,\text{ \ \ for }T\geq \tilde{T}_{\ell }.
\end{equation*}%
This means that $\Lambda _{2}$ is an isomorphism for $T\geq \tilde{T}_{\ell }
$ and thus $(\phi ^{0},\phi ^{1})$ can be determined such that the control $%
v_{1}=\phi _{x}\left( -\ell _{1}t,t\right) $ and $v_{2}=\phi _{x}\left( \ell
_{2}t,t\right) $ drive the solution of (\ref{wavec2}) from the initial data $%
y^{0},y^{1}$ to $y\left( t_{0}+T\right) =y_{t}\left( t_{0}+T\right) =0$.

$\bullet $ If $T<\tilde{T}_{\ell },$ then Problem (\ref{wave}) is not
observable. This means that we can find non-zero initial data $(\phi
^{0},\phi ^{1})\in H_{0}^{1}\left( I_{t_{0}}\right) \times L^{2}\left(
I_{t_{0}}\right) $ such that%
\begin{equation}
\phi _{x}\left( -\ell _{1}t,t\right) =\phi _{x}\left( \ell _{2}t,t\right) =0,%
\text{ \ \ }\forall t\in (t_{0},t_{0}+T).  \label{2v=0}
\end{equation}%
Choose $(y^{1},y^{0})\in H^{-1}\left( I_{t_{0}}\right) \times L^{2}\left(
I_{t_{0}}\right) $ such that%
\begin{equation*}
\left\langle y^{1},\phi ^{0}\right\rangle _{H_{0}^{1}\left( I_{t_{0}}\right)
}-\int_{-\ell _{1}t_{0}}^{\ell _{2}t_{0}}y^{0}\phi ^{1}dx\neq 0.
\end{equation*}%
Then, the solution of (\ref{wavec2}), in the transposition sense, satisfies%
\begin{multline*}
\left\langle y_{t}\left( t_{0}+T\right) ,\phi \left( t_{0}+T\right)
\right\rangle _{H_{0}^{1}\left( I_{t_{0}}\right) }-\int_{-\ell _{1}\left(
t_{0}+T\right) }^{\ell _{2}\left( t_{0}+T\right) }y\left( t_{0}+T\right)
\phi _{t}\left( t_{0}+T\right) dx-\left\langle y^{1},\phi ^{0}\right\rangle
_{H_{0}^{1}\left( I_{t_{0}}\right) } \\
+\int_{-\ell _{1}t_{0}}^{\ell _{2}t_{0}}y^{0}\phi ^{1}dx+\left( 1-\ell
_{1}^{2}\right) \int_{t_{0}}^{t_{0}+T}v_{1}\left( t\right) \phi _{x}\left(
-\ell _{1}t,t\right) dt+\left( 1-\ell _{2}^{2}\right)
\int_{t_{0}}^{t_{0}+T}v_{2}\left( t\right) \phi _{x}\left( \ell
_{2}t,t\right) dt=0.
\end{multline*}%
Whatever the choice of $v_{1},v_{2}\in L^{2}(t_{0},t_{0}+T),$ the third and
fourth integral terms vanish due to (\ref{2v=0}), hence we cannot have $%
y_{T}(t_{0}+T)=y(t_{0}+T)=0$ on $I_{t_{0}+T}$. This completes the proof.
\end{proof}

\begin{remark}
The observability and controllability of the 1-d wave equation in
noncylindrical domains, with other types of boundary conditions, are
established by the same approach used in this paper. The results will be
published elsewhere.
\end{remark}

%Xx x x x \ \ \ \ \ \ x


\begin{thebibliography}{25}

\bibitem{Bal:61}
N.~{Balazs}.
\newblock {On the solution of the wave equation with moving boundaries.}
\newblock \emph{{J. Math. Anal. Appl.}}, 3:\penalty0 472--484, 1961.
 

\bibitem{BaC:81}
C.~{Bardos} and G.~{Chen}.
\newblock {Control and stabilization for the wave equation. III: Domain with
  moving boundary.}
\newblock \emph{{SIAM J. Control Optim.}}, 19:\penalty0 123--138, 1981.
 

\bibitem{BiR:89}
G.~Birkhoff and G.~Rota.
\newblock \emph{Ordinary differential equations}.
\newblock John Wiley \& Sons, 3rd edition, 1989.

\bibitem{CoB:73}
J.~{Cooper} and C.~{Bardos}.
\newblock {A nonlinear wave equation in a time dependent domain.}
\newblock \emph{{J. Math. Anal. Appl.}}, 42:\penalty0 29--60, 1973.
 

\bibitem{CLG:13}
L.~Cui, X.~Liu, and H.~Gao.
\newblock Exact controllability for a one-dimensional wave equation in
  non-cylindrical domains.
\newblock \emph{J. Math. Anal. Appl.}, 402:\penalty0 612--625, 2013.
 

\bibitem{CJW:15}
L.~Cui, Y.~Jiang, and Y.~Wang.
\newblock Exact controllability for a one-dimensional wave equation with the
  fixed endpoint control.
\newblock \emph{Bound. Value Probl.}, 2015\penalty0 (208):\penalty0 1--10, 2015.
 

\bibitem{DKN:93}
V.~V. Dodonov, A.~B. Klimov, and D.~E. Nikonov.
\newblock Quantum phenomena in resonators with moving walls.
\newblock \emph{J. Math. Phys.}, 34\penalty0 (7):\penalty0 2742--2756, 1993.
 

\bibitem{KnK:14}
E.~Knobloch and R.~Krechetnikov.
\newblock Problems on time-varying domains: Formulation, dynamics, and
  challenges.
\newblock \emph{Acta Appl. Math.}, 137\penalty0 (1):\penalty0 123--157, dec
  2015.
 

\bibitem{Komo:94}
V.~Komornik.
\newblock \emph{Exact controllability and stabilization. The multiplier
  method}, volume~36 of \emph{RMA}.
\newblock Masson--John Wiley \& Sons, 1994.

\bibitem{KoLo:05}
V.~Komornik and P.~Loreti.
\newblock \emph{Fourier series in control theory}.
\newblock Springer, 2005.
 

\bibitem{Lion:69}
J.-L. Lions.
\newblock \emph{Quelques m\'{e}thodes de r\'{e}solution des probl\`{e}mes aux
  limites non lin\'{e}aires}.
\newblock Dunod-Gautier Villars, 1969.

\bibitem{Lion:88}
J.-L. Lions.
\newblock \emph{Contr{\^o}labilit{\'e} exacte, stabilisation et perturbations
  de systemes distribu{\'e}s. Tome 1. Contr{\^o}labilit{\'e} exacte}, volume~8
  of \emph{RMA}.
\newblock Masson, 1988.

\bibitem{LLC:15}
L.~Lu, S.~Li, G.~Chen, and P.~Yao.
\newblock Control and stabilization for the wave equation with variable
  coefficients in domains with moving boundary.
\newblock \emph{Systems \& Control Lett.}, 80:\penalty0 30--41, 2015.
 

\bibitem{MLM:02}
L.~A. Medeiros, J.~Limaco, and S.~B. Menezes.
\newblock Vibrations of elastic strings: Mathematical aspects, part two.
\newblock \emph{J. Comput. Anal. Appl.}, 4\penalty0 (3):\penalty0 211--263,
  2002.
 

\bibitem{Mir:96}
M.~M. Miranda.
\newblock Exact controllability for the wave equation in domains with variable
  boundary.
\newblock \emph{Rev. Mat. Complut.}, 9\penalty0 (2), 1996.
 

\bibitem{Moo:70}
G.~T. Moore.
\newblock Quantum theory of electromagnetic field in a variable-length
  one-dimensional cavity.
\newblock \emph{J. Math. Phys.}, 11\penalty0 (9):\penalty0 2679--2691, 1970.
 

\bibitem{Pin:09}
M.~A. Pinsky.
\newblock \emph{Introduction to Fourier analysis and wavelets}, volume 102 of
  \emph{Grad. Stud. Math.}
\newblock Am. Math. Soc., Providence, RI, 2009.

\bibitem{Rus:78}
D.~L. Russell.
\newblock Controllability and stabilizability theory for linear partial
  differential equations: recent progress and open questions.
\newblock \emph{{SIAM} Rev.}, 20\penalty0 (4):\penalty0 639--739, 1978.
 

\bibitem{Sen:18}
A.~Sengouga.
\newblock Observability and controllability of the 1--d wave equation in
  domains with moving boundary.
\newblock \emph{Acta App. Math., }doi:10.1007/s10440-018-0166-1, 2018.

\bibitem{Sen:18b}
A.~Sengouga.
\newblock Observability of the 1-d wave equation with
mixed boundary conditions in a non-cylindrical domain
\newblock \emph{Mediterr. J. Math.}, 2018\penalty0 (15) art. 62, 2018.

\bibitem{Str:08}
W.~A. Strauss.
\newblock \emph{Partial differential equations: An introduction}.
\newblock John Wiley \& Sons, Inc, 2nd edition, 2008.

\bibitem{SLL:15}
H.~Sun, H.~Li, and L.~Lu.
\newblock Exact controllability for a string equation in domains with moving
  boundary in one dimension.
\newblock \emph{Electron. J. Diff. Equations}, 98:\penalty0 1--7, 2015.

\bibitem{Wan:90}
P.~K.~C. Wang.
\newblock Stabilization and control of distributed systems with time-dependent
  spatial domains.
\newblock \emph{J. Optim. Theory Appl.}, 65\penalty0 (2):\penalty0 331--362,
  May 1990.
 

\bibitem{Zua:05}
E.~Zuazua.
\newblock Propagation, observation, and control of waves approximated by finite
  difference methods.
\newblock \emph{{SIAM} Rev.}, 47\penalty0 (2):\penalty0 197--243, 2005.
 

\end{thebibliography}
\end{document}